\newtheorem{thm}{Theorem}
\newtheorem{prop}{Proposition}
\newtheorem{lem}{Lemma}
\numberwithin{equation}{section}
\numberwithin{prop}{section}
\numberwithin{lem}{section}
\numberwithin{thm}{section}
\newtheorem{cor}{Corollary}
\numberwithin{cor}{section}
\theoremstyle{definition}
\newtheorem{defn}{Definition}
\numberwithin{defn}{section}
\newtheorem{rem}{Remark}
\numberwithin{rem}{section}
\newcommand{\nop}[1]{{}^{\scriptscriptstyle{\circ}}_{\scriptscriptstyle{\circ}}{#1}{}^{\scriptscriptstyle{\circ}}_{\scriptscriptstyle{\circ}}}
\def \<{\langle}
\def \>{\rangle}
\def \a{\alpha }
\def \b{\beta }
\newcommand{\bea}{\begin{eqnarray}}
\newcommand{\eea}{\end{eqnarray}}
\newcommand{\be}{\begin {equation}}
\newcommand{\ee}{\end{equation}}
\newcommand{\g}{\mathfrak{g}}
\newcommand{\h}{\mathfrak{h}}
\newcommand{\wt}{{\rm {wt} }   }
\newcommand{\xbet}[1]{x^{\hnu}_{\beta}\left(#1\right)}
\newcommand{\xbbet}[2]{x^{\hnu}_{\beta_{#1}}\left(#2\right)}
\newcommand{\xa}[2]{x^{\hat{\nu}}_{\alpha_{#1}}\left(#2\right)}
\newcommand{\xaa}[3]{x^{\hat{\nu}}_{\alpha_{#1}+\alpha_{#2}}\left(#3\right)}
\newcommand{\xb}[1]{x^{\hat{\nu}}_{\b}\left(#1\right)}
\newcommand{\hnu}{\hat{\nu}}
\newcommand{\n}{\mathfrak{n}}
\newcommand{\vL}{v_{\Lambda}}
\newcommand{\ta}[1]{\tau_{\gamma_{#1},\theta_{#1}}}
\newcommand{\on}{\overline{\n}[\hnu]}
\newcommand{\ps}[1]{\psi_{\gamma_{#1},\theta_{#1}}}
\newcommand{\un}[0]{U\left(\overline{\mathfrak{n}}[\hnu]\right)}
\newcommand{\CC}[0]{\mathbb{C}}
\newcommand{\ZZ}[0]{\mathbb{Z}}
\newcommand{\EE}[0]{{\varepsilon}}
\tikzset{node distance=2em, ch/.style={circle,draw,on chain,inner sep=2pt},chj/.style={ch,join},every path/.style={shorten >=4pt,shorten <=4pt},line width=1pt,baseline=-1ex}
\let\dlabel=\alabel
\newcommand{\dnode}[2][chj]{%
\node[#1,label={below:\dlabel{#2}}] {};
}
\newcommand{\dnodebr}[1]{%
\node[chj,label={below right:\dlabel{#1}}] {};
}
\newcommand{\dydots}{%
\node[chj,draw=none,inner sep=1pt] {\dots};
}
\begin{document}
\title{Vertex-algebraic structure of principal subspaces of the basic modules for twisted Affine Kac-Moody Lie algebras of type $A_{2n-1}^{(2)}, D_n^{(2)}, E_6^{(2)}$}  
\author{Michael Penn and Christopher Sadowski}
%\address{Department of Mathematics and Computer Science, Colorado College}
%\email{michael.penn@coloradocollege.edu}

\begin{abstract} We obtain a presentation of principal subspaces of the basic modules for the twisted affine Kac-Moody Lie algebras of type  $A_{2n-1}^{(2)}$, $D_n^{(2)}$ and $E_6^{(2)}$. Using this presentation, we construct exact sequences among these principal subspaces, and use these exact sequences to obtain recursions satisfied by graded dimensions of the principal subspaces. Solving these recursions, we obtain the graded dimensions of the principal subspaces.

\end{abstract}

\maketitle

\section{Introduction}
Principal subspaces of standard modules for untwisted affine Lie algebras were introduced and studied by Feigin and Stoyanovsky in \cite{FS1}--\cite{FS2}. Motivateded by work of Lepowsky and Primc \cite{LP}, Feigin and Stoyanovsky noticed difference-two type partition conditions in these structures. In particular, Feigin and Stoynovsky showed that the multigraded dimensions of the principal subspace of the basic modules for $A_1^{(1)}$ are the sum-sides of the Rogers-Ramanujan identities, and more generally, the multigraded dimensions of the principal subspaces of the higher level standard modules for $A_1^{(1)}$ are the sum-sides of the Andrews-Gordon identities. 

In \cite{CLM1}--\cite{CLM2}, Capparelli, Lepowsky, and Milas, studied the principal subspaces of the standard modules for $A_1^{(1)}$ using the theory of vertex operator algebras and intertwining operators. Under certain assumptions (namely, presentations of principal subspaces using generators and relations used in \cite{FS1}--\cite{FS2}), they constructed exact sequences among these principal subspaces. They used these exact sequences to show that the multigraded dimensions of the principal spaces satisfy the Rogers-Ramanujan and Rogers-Selberg recursions. From these recursions, they recovered the graded dimensions found in \cite{FS1}--\cite{FS2}. In \cite{CalLM1}--\cite{CalLM2}, Calinescu, Lepowsky, and Milas, again using the theory of vertex operator algebras and intertwining operators, proved the presentations used in \cite{FS1}--\cite{FS2} and \cite{CLM1}--\cite{CLM2}. In \cite{CalLM3}, they extended their work to study the principal subspaces of basic modules for untwisted affine Lie algebras of type $A,D,E$. They also constructed exact sequences among the principal subspaces, found the recursions that the multigraded dimensions satisfy, and solved those recursions to find the multigraded dimensions of the principal subspaces. 

Principal subspaces have been studied from a vertex-algebraic point of view in many other works. We give a brief overview of recent work in this area. In \cite{C1} and \cite{S1}, principal subspaces of higher level standard $A_2^{(1)}$-modules have been studied in a spirit similar to \cite{CLM1}--\cite{CLM2} and \cite{CalLM1}--\cite{CalLM2}. These principal subspaces were also student in \cite{AKS}, \cite{FFJMM}, and in other works by these authors. Georgiev used the theory of vertex operator algebras and intertwining operators to construct combinatorial bases for principal subspaces of certain higher level standard modules for untwisted affine Lie algebras of type $A,D,E$ in \cite{G}, and this work has since been extended to untwisted affine Lie algebras of type $B,C$ by Butorac in \cite{Bu1}--\cite{Bu2} and to the quantum group case by Ko\v{z}i\'{c} in \cite{Ko}. Principal subspaces of standard modules for more general lattice vertex operator algebras have been studied in \cite{MPe}, \cite{P}, and \cite{Ka1}--\cite{Ka2}. Certain commutative variants of principal subspaces, called Feigin-Stoyanovsky type subspaces, have been studied in \cite{Ba}, \cite{J1}--\cite{J2}, \cite{JP}, and \cite{T1}--\cite{T3}. The multigraded dimensions of principal subspaces are Nahm sums (cf. \cite{Za}) or closely resemble Nahm sums, and modularity properties of these sums have been studied in \cite{BCFK} for the untwisted affine Lie algebras of type $A$ and in \cite{CalMPe} for the twisted affine Lie algebra $A_4^{(2)}$.

The present work is an extension of the work on principal subspaces of basic modules for twisted affine Lie algebras intiated in \cite{CalLM4} and continued in \cite{CalMPe} and \cite{PS}. We study the principal subspaces of the basic modules for twisted affine Lie algebras of type $A_{2n11}^{(2)}$ for $n\ge 2$, $D_n^{(2)}$ for $n\ge 4$, and $E_6^{(2)}$. We prove a presentation of these principal subspaces and use this presentation to find their multigraded dimensions. It is important to note that the ideas involved in this work and in \cite{CalLM4}, \cite{CalMPe}, and \cite{PS} all require slightly different constructions and technical lemmas, and thus have been studied on a case-by-case basis in these works. To date, there is no known way to handle all the cases simultaneously. We also note here that in this paper, as well as in \cite{PS}, the multigraded dimensions of the principal subspaces closely resemble Nahm sums, but are not Nahm sums themselves due to the form that the denominator of the summands (namely, the appearance of terms of the form $(q^2; q^2)_n$ in the denominators).  

We now give a brief outline of this paper. In Section 2 of, we follow \cite{L1} and \cite{CalLM4} and give the vertex-algebraic construction of the basic modules $V_L^T$ for the twisted affine Lie algebras $A_{2n-1}^{(2)}, D_n^{(2)}$, and $E_6^{(2)}$, viewed as twisted modules for the lattice vertex operator algebras $V_L$ of type $A_{2n-1}^{(1)}, D_n^{(1)}$, and $E_6^{(1)}$. This construction uses the Dynkin diagram automorphisms for the finite dimensional Lie algebras of type $A,D,E$, which we denote $\nu$. In Section 3, we recall the notion of principal subspace (cf. \cite{FS1}--\cite{FS2}, \cite{CLM1}--\cite{CLM2}, \cite{CalLM1}--\cite{CalLM4}, and many others) and define the principal subspaces $W_L^T$ of the basic modules constructed in Section 2. We also define certain natural operators which annihilate the highest weight vectors of the basic modules, as well as left ideals generated by these operators. In Section 4, we define certain natural ``shifting" maps among these ideals, and prove various containment properties of these maps. In Section 5, we use these maps to show that the left ideals defined in Section 3 give presentations of our principal subspaces. We use these presentations to construct exact sequences among our principal subspaces, find recursions satisfied by their multigraded dimensions:
\begin{equation}
\chi'(\mathbf{x};q) = \chi'(x_1,\dots,x_{i-1},q^2x_i,x_{i+1},\dots,x_k;q) + x_iq^2 \chi'(q^{2\left<(\a_{i_1})_{(0)}(\a_{i})_{(0)}\right>}x_1,\dots,q^{2\left<(\a_{i_k})_{(0)}(\a_{i})_{(0)}\right>}x_k;q)
\end{equation}
if $\nu\a_i=\a_i$ and
\begin{equation}
\chi'(\mathbf{x};q) = \chi'(x_1,\dots,x_{i-1},qx_i,x_{i+1},\dots,x_k;q) + x_iq \chi'(q^{2\left<(\a_{i_1})_{(0)}(\a_{i})_{(0)}\right>}x_1,\dots,q^{2\left<(\a_{i_k})_{(0)}(\a_{i})_{(0)}\right>}x_k;q)
\end{equation}
if $\nu\a_i\neq\a_i$. Here, in each case, we denote the simple roots by $\alpha_i$, and denote by $(\a_{i})_{(0)}$ the projection of the root $\a_{i}$ onto the $0$ eigenspace of $\nu$ extended to the Cartan subalgebra of the underlying finite dimensional Lie algebra.
Consequently, solving these recursions, we obtain the multigraded dimensions of the principal subspaces:
\begin{equation}
\chi^{'}(\mathbf{x};q) = \sum_{{\bf m} \in (\mathbb{Z}_{\ge 0}^k)}\frac{q^\frac{{\bf m}^t A {\bf m}}{2}}{(q^{a_1};q^{a_1})_{m_1} \cdots (q^{a_k};q^{a_k})_{m_k} }x_1^{m_1}\cdots x_k^{m_k}
\end{equation}
where $a_j=2$ if $\nu\a_{i_j}=\a_{i_j}$, $a_j=1$ if $\nu\a_{i_j}\neq\a_{i_j}$ and $A=2\left(\left<(\a_{i_j})_{(0)},(\a_{i_k})_{(0)}\right>\right)$.
The ideas in Sections 3, 4, and 5 are natural analogues of ideas used in \cite{CalLM1}--\cite{CalLM4} and \cite{PS} to prove similar presentations, adapted to the setting of this paper. 
\section{The Setting}

Let
\be
L=\ZZ\a_1\oplus\cdots\ZZ\a_{l}
\ee
be an integral lattice with associated symmetric non-degenerate bilinear form $\left<\cdot,\cdot\right>:L\times L\to\ZZ$ such that $\left<\a,\a\right>\in 2\ZZ$ for all $\a\in L$. In this setting we often say $L$ is an even lattice. We also assume that $L$ admits an isometry $\nu:L\to L$ such that
\be\label{isomcond1}
\nu^k=1\ee
and
\be\label{isomcond}
\left<\nu^\frac{k}{2}\a,\a\right>\in 2\ZZ~~\text{ for all } \a\in L\ee
if $k$ is even.
In the language of \cite{L1},\cite{CalLM4} and others, we take $k=2$, and in addition assume that for all $\a\in L$ we have $\left<\nu^{k/2}\a,\a\right>\in 2\ZZ$.

\begin{rem} The root lattices of the Lie algebras $A_{2n-1}$, $D_n$, $E_6$ are special cases of this general set-up. We will exploit this to analyze the principal subspaces of the level one standard modules of $A_{2n-1}^{(2)}$, $D_n^{(2)}$ and $E_6^{(2)}$. The $A_{2n}^{(2)}$ case was
examined in \cite{CalMPe}, and required $k=4$ as the period of $\nu$ (which had order $2$), doubling the order of the Dynkin diagram automorphism for the Lie algebra $A_{2n}$.\end{rem}

Following Section 2 of \cite{CalLM4} (also \cite{L1}) while specializing to our setting, we fix $\eta=-1$, the primitive second root of unity, and set $\eta_0=(-1)^2\eta=-1$. Define the functions $C_0:L\times L\to\CC^{\times}$ and $C:L\times L\to\CC^{\times}$ by 
\be
C_0(\a,\b)=(-1)^{\left<\a,\b\right>}\ee
and
\be
C(\a,\b)=\prod_{j=0}^{1}(-(-1)^j)^{\left<\nu^j\a,\b\right>}=(-1)^{\left<\a,\b\right>}.\ee
Here we have $C=C_0$.

We define the central extension $\hat{L}$ of $L$ by $\left<-1\right>$ using the commutator map $C_0\ (=C)$. In other words, we have an exact sequence
\be
1\longrightarrow\left<-1\right>\longrightarrow\hat{L}\overset{\overline{~~}}{\longrightarrow} L\longrightarrow 1
\ee
so that $aba^{-1}b^{-1}=C_0(\overline{a},\overline{b})$ for $a,b\in\hat{L}$. 

\begin{rem}In previous related work (\cite{CalLM4}, \cite{CalMPe}, and \cite{PS}), $C_0$ and $C$ were inequivalent maps and thus determined two inequivalent central extensions $\hat{L}$ and $\hat{L}_{\nu}$, respectively. \end{rem}

After \cite{L1} and \cite{CalLM4}, we let 
\begin{align*}
e:L&\to\hat{L}\\
\a&\mapsto e_{\a}\end{align*}
be a normalized section of $\hat{L}$ so that 
\be
e_0=1\ee
and
\be\overline{e_{\a}}=\a~\text{ for all }~\a\in L.\ee
We define $\epsilon_{C_0}:L\times L\to\left<-1\right>$ by the condition
\be e_{\a}e_{\b}=\epsilon_{C_0}(\a,\b)e_{\a+\b}~ \text{ for all }~\a,\b\in L.\ee

With this set-up $\epsilon_{C_0}(=\epsilon_C)$ is a normalized 2-cocycle associated to the commutator map $C_0(=C)$. In other words,

\begin{align*}
\epsilon_{C_0}(\a,\b)\epsilon_{C_0}(\a+\b,\gamma)&=\epsilon_{C_0}(\b,\gamma)\epsilon_{C_0}(\a,\b+\gamma)\\
\epsilon_{C_0}(0,0)&=1\end{align*}

and

\be
\frac{\epsilon_{C_0}(\a,\b)}{\epsilon_{C_0}(\b,\a)}=C_0(\a,\b).\ee

We choose our 2-cocycle to be the $\mathbb{Z}$-bilinear map determined by
\be
\epsilon_{C_0}(\a_i,\a_j)=\left\{
     \begin{array}{lr}
       1 & \text{for } i\leq j\\
       (-1)^{\left<\a_i,\a_j\right>} & \text{for } i\geq j.
     \end{array}
   \right.\ee

As in \cite{L1} and \cite{CalLM4}, we lift the isometry $\nu$ of $L$ to an automorphism $\hnu$ of $\hat{L}$ such that 
   \be\label{cond1}
   \overline{\hnu a}=\nu\overline{a} \hspace{0.4in} \text{ for } \hspace{0.4in} a\in\hat{L}.
   \ee
and choose $\hnu$ so that 
   \be\label{cond2}
   \hnu a=a \hspace{0.4in} \text{ if } \hspace{0.4in} \nu\overline{a}=\overline{a},
   \ee
   and thus $\hnu^2=1$.  Set 
   \be
   \hnu e_{\a}=\psi(\a)e_{\nu\a}\ee
   where $\psi:L\to \left<-1\right>$ is defined by
   \begin{equation}\label{psilift}
\psi(\alpha)=\begin{cases} 
      \epsilon_{C_0}(\a,\a) & \text{if $L$ is type $A_{2n-1}$} \\
      1 & \text{if $L$ is type $D_{n}$} \\
      (-1)^{r_3r_6}\epsilon_{C_0}(\a,\a) & \text{if $L$ is type $E_6$ and $\a=\sum_{i=1}^6r_i\a_i$}.
   \end{cases}
\end{equation}
Checking that $\hnu(e_{\a}e_{\beta})=\hnu e_{\a}\hnu e_{\beta}$ is clear given that 
\be
\epsilon_{C_0}(\nu\a,\nu\beta)=\begin{cases}
\epsilon_{C_0}(\beta,\a)& \text{if $L$ is type $A_{2n-1}$} \\
      \epsilon_{C_0}(\a,\beta) & \text{if $L$ is type $D_{n}$} \\
      (-1)^{r_6s_3+r_3s_6}\epsilon_{C_0}(\beta,\a) & \text{if $L$ is type $E_6$, $\a=\sum_{i=1}^6r_i\a_i$, and $\b=\sum_{i=1}^6s_i\a_i$}.
   \end{cases}
\ee
Observe that this choice of lifting satisfies (\ref{cond1}), (\ref{cond2}), and 
\be \label{liftingdef} \hnu e_{\a_i}=e_{\nu\a_i}, \ee
where $\a_i$ is a simple root.

\color{black}
   
 Now we recall the construction of the vertex operator algebra $V_{L}$ while lifting $\hnu$ to an automorphism of $V_L$ which we also denote by $\hnu$, and use the space $\hat{L}_{\nu}$ to construct $\hnu$-twisted modules of $V_L$. This closely follows \cite{L1} and \cite{FLM2}. We begin by setting
\be\label{cartan}
\h=\CC\otimes_{\ZZ}L\ee
and extend the bilinear form on $L$ to a $\CC$-bilinear form on $\h$. We view $\h$ as an abelian Lie algebra and consider its affinization
\be
\hat{\h}=\h\otimes \CC[t,t^{-1}]\oplus \CC k\ee
where 
\be
[\a\otimes t^r,\beta\otimes t^s]=\left<\a,\beta\right>r\delta_{r+s,0}k
\ee
for $\a,\beta\in \h$ and $r,s\in\ZZ$, where $k$ is central. We also endow $\hat{\h}$ with a natural $\ZZ$-grading given by $\text{wt}(\a\otimes t^m)=-m$ and $\text{wt}(k)=0$ for $\a\in\h$ and $m\in\ZZ$. Now form the module
\be
M(1)=U(\hat{\h})\otimes_{U(\h\otimes\CC[t]\oplus\CC k)}\CC\cong S(\hat{\h}^{-})~~~~\text{linearly}\ee
where $\hat{\h}^{-}=\h\otimes t^{-1}\CC[t^{-1}]$. Observe that $M(1)$ inherits the $\ZZ$-grading from $\hat{\h}$.

Next we form the $\hat{L}$-module
\be
\CC\{L\}=\CC[\hat{L}]\otimes_{\CC[\left<-1\right>]}\CC\cong\CC[L]~~~\text{(linearly)}.
\ee
Using the inclusion $\iota:\hat{L}\to\CC\{L\}$ we write $\iota(a)=a\otimes 1$. This module is $\CC$-graded so that 
\be
\text{wt}(\iota(1))=0\ee
and
\be\text{wt}(e_{\a})=\frac{1}{2}\left<\a,\a\right>,\ee
for $\a\in L$.
We set
\be\begin{aligned}\label{VL}
V_L&=M(1)\otimes\CC\{L\}\\
&=S\left(\hat{\h}^{-}\right)\otimes \CC[L]~~~\text{(linearly)}\end{aligned}\ee
which is naturally acted upon by $\hat{L}$, $\hat{\h}_{\ZZ}$, $\h$, and $x^h$ $(h\in\h)$ as described in previous work (\cite{CalLM4},\cite{L1},\cite{FLM2}).

For $v=h_1(-m_1)\cdots h_r(-m_r)\otimes\iota(a)\in V_L$ with $h_i\in \h$, $m_i\in\mathbb{N}$, and $a\in\hat{L}$ we set
\be
Y(v,x)=\nop{\frac{1}{(m_1-1)!}\left(\frac{d}{dx}\right)^{m_1-1}h_1(x)\cdots\frac{1}{(m_r-1)!}\left(\frac{d}{dx}\right)^{m_r-1}h_r(x)Y(\iota(a),x)}
\ee
with $h_i(x)=\sum_{m\in\ZZ}h_i(m)x^{-m-1}$ where we have associated $h_i\otimes t^m=h_i(m)$. We also have
\be
Y(\iota(a),x)=E^{-}(-\overline{a},x)E^{+}(-\overline{a},x)ax^{\overline{a}}\ee
where 
\be 
E^{\pm}(\a,x)=\text{exp}\left(\sum_{m\in\pm\ZZ_{+}}\frac{\a(m)}{m}x^{-m}\right)\in(\text{End } V_L)[[x,x^{-1}]]\ee
for $\a\in\h$.
By Theorem 6.5.3 of \cite{LL}, $(V_L,Y,\mathbb{1},\omega)$ is a simple vertex algebra of central charge $d=\text{rank } L=\text{dim} \h$ with $\mathbb{1}=1\otimes 1$ and 
\be
\omega=\frac{1}{2}\sum_{i=1}^{d}h_i(-1)h_i(-1)\mathbb{1}\ee
where $\{h_1,\cdots,h_d\}$ forms an orthonormal basis of $\h$.
We also extend the automorphism $\hnu$ of $\hat{L}$ to an automorphism (which we also denote by $\hnu$) of $V_L$. Using the tensor product construction of $V_L$ (\ref{VL}) we take this extension to be $\nu\otimes\hnu$. We refer the reader to Section 2 of \cite{CalLM4} (\cite{L1}) for the details showing that this indeed defines an automorphism of $V_L$. 

Our next objective is to construct a $\hnu$-twisted $V_L$-module in the spirit of \cite{CalLM4} and \cite{L1}. We skip many details an focus on how this construction specializes to our case. 

Set
\be
\h_{(m)}=\{h\in\h|\hnu h=(-1)^mh\}\ee
for $m\in\ZZ$, identify $\h_{(m)}=\h_{(m\text{ mod }2)}$, and write
\be
\h=\h_{(0)}\oplus\h_{(1)}.
\ee
Observe that 
\be\label{zeroproj}
\h_{(0)}=\{\a+\nu\a|\a\in\h\}\ee
and
\be
\h_{(1)}=\{\a-\nu\a|\a\in\h\}.\ee
Let $P_0$ and $P_1$ be the projections of $\h$ onto $\h_{(0)}$ and $\h_{(1)}$ respectively. For $h\in\h$ and $m\in\ZZ/2\ZZ$ we set $h_{(m)}=P_mh$. Thus, we may write
\be
h_{(m)}=\frac{1}{2}(h+(-1)^m \nu h).\ee

We form the $\nu$-twisted affine Lie algebra associated to $\h$:
\be
\hat{\h}[\nu]=\h_{(0)}\otimes\CC[t,t^{-1}]\oplus\h_{(1)}\otimes t^{\frac{1}{2}}\CC[t,t^{-1}]\oplus\CC k\ee
where $k$ is central and
\be
[\a\otimes t^r,\b\otimes t^s]=\left<\a,\b\right>r\delta_{r+s,0}k\ee
for $r,s\in\frac{1}{2}\ZZ$, $\a\in\h_{(2r)}$, and $\beta\in\h_{(2s)}$. We have that $\hat{\h}[\nu]$ is $\frac{1}{2}\ZZ$-graded by weights such that 
\be 
\wt(\a\otimes t^m)=-m\hspace{.5in}\text{and}\hspace{.5in}\wt(k)=0.\ee

We set
\be
N=(1-P_0)\h\cap L\ee
and observe that in our setting we have
\be
N=\coprod_{i=1}^{l}\ZZ(\a_i-\nu\a_i).\ee
 Using Proposition 6.2 of \cite{L1}, let $\CC_{\tau}$ denote the one dimensional $\hat{N}$-module $\CC$ with character $\tau$ and write
\be T=\CC_{\tau}.\ee

Now consider the induced $\hat{L}_{\nu}$-module 
\be
U_{T}=\CC[\hat{L}_{\nu}]\otimes_{\CC[\hat{N}]}T\cong \CC[L/N],\ee
which is graded by weights and on which $\hat{L}_{\nu}$, $\h_{(0)}$, and $x^h$ for $h\in\h_{(0)}$ all naturally act. Set
\be
V_{L}^{T}=S[\nu]\otimes U_T\cong S\left(\hat{\h}[\nu]^{-}\right)\otimes \CC[L/N],\ee
which is naturally acted upon by $\hat{L}_{\nu}$, $\hat{\h}_{\frac{1}{2}\ZZ}$, $\h_{(0)}$, and $x^h$ for $h\in\h$.

For each $\a\in\h$ and $m\in\frac{1}{2}\ZZ$ define the operators on $V_L^T$
\be
\a_{(2m)}\otimes t^m\mapsto \a^{\hnu}(m)\ee
and set
\be
\a^{\hnu}(x)=\sum_{m\in\frac{1}{2}\ZZ}\a^{\hnu}(m)x^{-m-1}.\ee
Of most importance will be the $\hnu$-twisted vertex operators acting on $V_L^T$ for each $e_{\a}\in\hat{L}$
\be
Y^{\hnu}(\iota(e_{\a}),x)2^{-\frac{\left<\a,\a\right>}{2}}E^{-}(-\a,x)E^{+}(-\a,x)e_{\alpha}x^{\alpha_{(0)}+\frac{\left<\alpha_{(0)},\alpha_{(0)}\right>}{2}-\frac{\left<\alpha,\alpha\right>}{2}}.\ee
as defined in \cite{L1}, where
\begin{equation}
E^{\pm}(-\alpha,x)=\text{exp}\left(\sum_{n\in\pm\frac{1}{2}\mathbb{Z}_{+}}\frac{-\alpha_{(2n)}(n)}{n}x^{-n}\right),
\end{equation}
for $\a\in\h$. For $m\in\frac{1}{2}$ and $\a\in L$ define the component operators $\xa{}{m}$ by 
\begin{equation}\label{VertexOperators}
Y^{\hnu}(\iota(e_{\alpha}),x)=\sum_{m\in\frac{1}{3}\mathbb{Z}}\xa{}{m}x^{-m-\frac{\left<\alpha,\alpha\right>}{2}}.
\end{equation}
Our construction to this point has been fairly general, apart from the restrictions given by (\ref{isomcond1}) and (\ref{isomcond}). From now on we will assume that $L$ is a root lattice of type $A_{2n-1}, D_n,$ or $E_6$. In the case that we have a root lattice of type $A$ or $D$ we make the standard choice of simple roots, which we now recall. In the case of $A_{2n-1}$, we let
\be
L=\ZZ\a_1\oplus\cdots\oplus\ZZ\a_{2n-1} \text{ with } \a_i=\varepsilon_i-\varepsilon_{i-1}\ee
where $\{\varepsilon_i\}$ is the standard basis of $\mathbb{R}^{2n}$. This corresponds to the following standard labeling of the Dynkin diagram.

\begin{center}\begin{tikzpicture}[start chain]
\dnode{1}
\dnode{2}
\dydots
\dnode{n-1}
\dnode{n}
\dnode{n+1}
\dydots
\dnode{2n-2}
\dnode{2n-1}
\end{tikzpicture}\end{center}
In this case, our isometry $\nu$ corresponds to the Dynkin diagram folding and is defined on the simple roots by 
\be\label{isomA}
\nu\a_i=\a_{2n-i} \text{ for } 1\leq i\leq 2n-1,\ee
and extended $\ZZ$-linearly. Observe that $\a_n$ is fixed under $\nu$. Moreover, the corresponding set of roots is given by
\be
\Delta=\{\varepsilon_{i}-\varepsilon_j|1\leq i\neq j\leq 2n\},\ee
where we choose the following subset of positive roots
\be
\Delta_{+}=\{\varepsilon_{i}-\varepsilon_j|1\leq i<j\leq 2n\}\ee
so that we have 
\be \Delta=\Delta_{+}\cup (-\Delta_{+}).\ee
For $1\leq i<j\leq 2n$ we may write $\pm(\varepsilon_i-\varepsilon_j)=\pm(\a_i+\a_{i+1}+\cdots+\a_{j-1})$ from which we have
\be
\nu(\pm(\varepsilon_i-\varepsilon_j))=\pm(\varepsilon_{2n-j+1}-\varepsilon_{2n-i+1}).\ee
In the case of $D_n$, we have 
\be\begin{aligned}
L=\ZZ\a_1\oplus\cdots\oplus\ZZ\a_{n} &\text{ with } \a_i=\varepsilon_i-\varepsilon_{i+1} \text{ for } 1\leq i<n\\&\text{ and }\a_n=\varepsilon_{n-1}+\varepsilon_{n},\end{aligned}\ee
where $\{\varepsilon_i\}$ is the standard basis of $\mathbb{R}^{n}$. This corresponds to the following standard labeling of the Dynkin diagram

\begin{center}
\begin{tikzpicture}
\begin{scope}[start chain]
\dnode{1}
\dnode{2}
\node[chj,draw=none] {\dots};
\dnode{n-2}
\dnode{n-1}
\end{scope}
\begin{scope}[start chain=br going above]
\chainin(chain-4);
\dnodebr{n}
\end{scope}
\end{tikzpicture}\end{center}

In this case, $\nu$ is defined by 
\be\begin{aligned}
&\nu\a_i=\a_i \text{ for } 1\leq i\leq n-2\\
&\nu \a_{n-1}=\a_n\\
&\nu\a_n=\a_{n-1},\end{aligned}\ee
and extended $\ZZ$-linearly. Moreover, the corresponding set of roots is given by
\be
\Delta=\{\varepsilon_{i}\pm\varepsilon_j|1\leq i\neq j\leq n\},\ee
where we choose the following subset of positive roots
\be
\Delta_{+}=\{\varepsilon_{i}\pm\varepsilon_j|1\leq i<j\leq n\}\ee
so that we have 
\be \Delta=\Delta_{+}\cup (-\Delta_{+}).\ee
We follow a procedure similar to that above to determine that the action of $\nu$ on any root is
\be
\nu(\pm(\varepsilon_i\pm\varepsilon_j))=\pm(\varepsilon_i\pm(-1)^{\delta_{j,n}}\varepsilon_j),
\ee
for $1\leq i<j\leq n$.

In the case of $E_6$, we first consider the standard basis for the root system with simple roots given by
\be\begin{aligned}
\a_1&=\EE_1-\EE_2\\
\a_2&=\EE_2-\EE_3\\
\a_3&=\EE_3-\EE_4\\
\a_4&=\EE_4+\EE_5\\
\a_5&=-\frac{1}{2}(\EE_1+\EE_2+\EE_3+\EE_4+\EE_5)+\frac{\sqrt{3}}{2}\EE_6\\
\a_6&=\EE_4-\EE_5,\end{aligned}\ee
where $\{\EE_1,\dots,\EE_6\}$ is the standard basis of $\mathbb{R}^6$.
 We now consider the invertible change of basis matrix 
 \be 
 P=\begin{pmatrix}\frac{1}{3} & \frac{1}{3} & \frac{1}{3} & 0 & 0 & \frac{1}{\sqrt{3}} \\ -\frac{1}{6} & -\frac{1}{6}  & -\frac{1}{6}  & \frac{1}{2}  &-\frac{1}{2}   & -\frac{1}{2\sqrt{3}}  \\ -\frac{1}{6} & -\frac{1}{6}  & -\frac{1}{6}  & -\frac{1}{2}  & \frac{1}{2}   & -\frac{1}{2\sqrt{3}} \\ -\frac{1}{6} & -\frac{1}{6}  & -\frac{1}{6}  & \frac{1}{2}  & \frac{1}{2}   & \frac{1}{2\sqrt{3}}  \\ -\frac{1}{6} & -\frac{1}{6}  & -\frac{1}{6}  & -\frac{1}{2}  & -\frac{1}{2}   & \frac{1}{2\sqrt{3}}\\ \frac{1}{3} & \frac{1}{3} & \frac{1}{3} & 0 & 0 & -\frac{1}{\sqrt{3}}\\ \frac{1}{3} & \frac{1}{3} & -\frac{2}{3} & 0 & 0 & 0 \\ \frac{1}{3} & -\frac{2}{3} & \frac{1}{3} & 0 & 0 & 0 \\ -\frac{2}{3} & \frac{1}{3} & \frac{1}{3} & 0 & 0 & 0\end{pmatrix}
 \ee
 and observe that for all $\mathbf{v},\mathbf{w}\in\Delta_{E_6}$, where $\Delta_{E_6}$ is the set of roots of $E_6$, we have 
 \be
 \left<P\mathbf{v},P\mathbf{w}\right>= \left<\mathbf{v},\mathbf{w}\right>,\ee
 where on the left we take the standard inner product on $\mathbb{R}^9$ and on the right the standard inner product on $\mathbb{R}^6$. The result is that  a copy of the $E_6$ root system may be embedded into $\mathbb{R}^9$. Now we make the identification $\mathbb{R}^9=\mathbb{R}^3\times\mathbb{R}^3\times \mathbb{R}^3$ and observe that result is a root system made up of nine dimensional vectors of the form $(\mathbf{v}_1,\mathbf{v}_2,\mathbf{v}_3)\in \mathbb{R}^3\times\mathbb{R}^3\times \mathbb{R}^3$. The positive roots are of two main types, either one of $\mathbf{v}_k=\EE_{i}-\EE_j$ with $1\leq i< j\leq3$ while the other two are the zero vector, or one of the 27 combinations where each $\mathbf{v}_i$ is taken from $\{-\frac{2}{3}\EE_1+\frac{1}{3}\EE_2+\frac{1}{3}\EE_3, \frac{1}{3}\EE_1-\frac{2}{3}\EE_2+\frac{1}{3}\EE_3,\frac{1}{3}\EE_1+\frac{1}{3}\EE_2-\frac{2}{3}\EE_3\}$.
With this identification we relabel the simple roots to be  
\be\begin{aligned}\label{syme6basis}
\a_1&=(\mathbf{0},\mathbf{0},\EE_2-\EE_3)\\
\a_2&=(\mathbf{0},\mathbf{0},\EE_1-\EE_2)\\
\a_3&=(\frac{1}{3}\EE_1-\frac{2}{3}\EE_2+\frac{1}{3}\EE_3,-\frac{2}{3}\EE_1+\frac{1}{3}\EE_2+\frac{1}{3}\EE_3,-\frac{2}{3}\EE_1+\frac{1}{3}\EE_2+\frac{1}{3}\EE_3)\\
\a_4&=(\mathbf{0},\EE_1-\EE_2,\mathbf{0})\\
\a_5&=(\mathbf{0},\EE_2-\EE_3,\mathbf{0})\\
\a_6&=(\EE_2-\EE_3,\mathbf{0},\mathbf{0}).
\end{aligned}\ee

%Finally, in the case of $E_6$ we take the simple roots
%  \color{red} This part may need to be fixed.\color{black}
%\be\begin{aligned}
%\a_1&=\varepsilon_1-\varepsilon_2\\
%\a_2&=\varepsilon_2-\varepsilon_3\\
%\a_3&=\varepsilon_3-\varepsilon_4\\
%\a_4&=\varepsilon_4+\varepsilon_5\\
%\a_5&=-\frac{1}{2}(\varepsilon_1+\cdots+\varepsilon_5)+\frac{\sqrt{3}}{2}\varepsilon_6\\
%\a_6&=\varepsilon_4-\varepsilon_5,\end{aligned}\ee
%where, again, the $\varepsilon_i$ form the standard basis of $\mathbb{R}^6$.
This corresponds to the following labeling of the Dynkin diagram.
\begin{center}\begin{tikzpicture}
\begin{scope}[start chain]
\foreach \dyni in {1,...,5} {
\dnode{\dyni}
}
\end{scope}
\begin{scope}[start chain=br going above]
\chainin (chain-3);
\dnodebr{6}
\end{scope}
\end{tikzpicture}\end{center}

Here we have
\be\begin{aligned}
\nu\a_1&=\a_5\\
\nu\a_2&=\a_4\\
\nu\a_3&=\a_3\\
\nu\a_6&=\a_6.\end{aligned}\ee

 Using this choice for the root vectors the action of $\nu$ is given by
 \be\label{nue6}
 \nu(\mathbf{v}_1,\mathbf{v}_2,\mathbf{v}_3)=(\mathbf{v}_1,\mathbf{v}_3,\mathbf{v}_2).
\ee
In each of these cases, we take $\left<\cdot,\cdot\right>:L\times L\to \ZZ$ to be the bilinear form given by 
\be
\left<\a,\b\right>=2\frac{(\b,\a)}{(\b,\b)},\ee
where $(\cdot,\cdot)$ is the standard Euclidean inner product given by $(\varepsilon_i,\varepsilon_j)=\delta_{i,j}$. Recall from (\ref{liftingdef}), in each of these cases the isometry $\nu$ lifts to an automorphism $\hnu$ of $V_L$ where 
\be
\hnu e_{\a_i}=e_{\nu \a_i}.\ee

Now we focus on the construction of the Lie algebras of type $A_{2n-1}$, $D_{n}$, and $E_6$, as well as their affine twisted counterparts from the appropriate root lattice and isometry $\nu$. Each of these cases will be handled simultaneously and we step back to point out significant differences when needed. Consider the vector space defined by 
\be
\g=\h\oplus\coprod_{\a\in\Delta}x_{\a},\ee
where $\h$ is defined as in (\ref{cartan}) and $\{x_{\a}\}$ is a set of symbols, where $\Delta$ is the set of roots corresponding to $L$. 

 We endow $\g$ with the structure of a Lie algebra via the bracket defined 
\be
[h,x_{\a}]=\left<h,\a\right>x_{\a},~~[\h,\h]=0\ee
where $h\in\h$ and $\a\in\Delta$ and
\begin{equation}
   [x_{\alpha},x_{\beta}] = \left\{
     \begin{array}{lr}
       \epsilon_{C_0}(\alpha,-\alpha)\alpha & \text{ if }  \alpha+\beta=0\\
        \epsilon_{C_0}(\alpha,\beta)x_{\alpha+\beta} & \text{ if }  \alpha+\beta\in\Delta\\
        0   & \text{ otherwise}
     \end{array}
   \right.
\end{equation}  
Observe that $\g$ is a Lie algebra isomorphic to one of type $A_{2n-1}$, $D_n$, or $E_6$ depending on the choice of $L$. We extend the bilinear form $\left<\cdot,\cdot\right>$ to $\g$ by
\be
\left<h,x_{\a}\right>=\left<x_{\a},h\right>=0\ee
and
\begin{equation}
\left<x_{\alpha},x_{\beta}\right>=\left\{\begin{array}{lrr}
\epsilon_{C_0}(\alpha,-\alpha)     &\text{if}~~~~\alpha+\beta=&0 \\
0     &\text{if}~~~~~\alpha+\beta\neq&0
\end{array}\right.\end{equation}

Following \cite{L1} and \cite{CalLM4}, we use our extension of $\nu:L\to L$ to $\hnu:\hat{L}\to\hat{L}$ to lift the automorphism $\nu:\h\to\h$ to an automorphism $\hnu:\g\to\g$ by setting
\be
\hnu x_{\a}=\psi (\alpha) x_{\nu\a}
\ee
for all $\a\in\Delta$, where $\psi$ is defined in (\ref{psilift}). 
Here, we are using our particular choices of $\hnu$ (extended to $\mathbb{C}\{L \}$) and our section $e$.

For $m\in\mathbb{Z}$ set
\begin{equation}
\g_{(m)}=\{x\in\g|\hnu(x)=(-1)^m x\}.
\end{equation}
 Now we decompose $\g$ into eigenspaces of $\hnu$ as follows
\be
\g_{(0)}=\coprod_{i=1}^{l}\CC(\a_i+\nu\a_i)\oplus\coprod_{\a\in\Delta}\CC(x_{\a}+x_{\nu\a})\ee
and
\be
\g_{(1)}=\coprod_{i=1}^{l}\CC(\a_i-\nu\a_i)\oplus\coprod_{\a\in\Delta}\CC(x_{\a}-x_{\nu\a})\ee
Form the $\hnu$-twisted affine Lie algebra associated to $\g$ and $\hnu$:
\begin{equation}\label{defineg}
\hat{\g}[\hnu]=\coprod_{n\in\mathbb{Z}}\g_{(n)}\otimes t^{n/2}\oplus\mathbb{C}k=\coprod_{n\in\frac{1}{2}\mathbb{Z}}\g_{(2n)}\otimes t^{n}\oplus\mathbb{C}k
\end{equation}
with
\begin{equation}
[x\otimes t^m,y\otimes t^n]=[x,y]\otimes t^{m+n}+\left<x,y\right>m\delta_{m+n,0}k
\end{equation}
and
\begin{equation}
[k,\hat{\g}[\hnu]]=0,
\end{equation}
for $m,n\in\frac{1}{2}\mathbb{Z}$, $x\in\g_{(2m)}$, and $y\in\g_{(2n)}$
Set 
\begin{equation}
\tilde{\g}[\hnu]=\hat{\g}[\hnu]\oplus\mathbb{C}d,
\end{equation}
with
\begin{equation}
[d,x\otimes t^n]=n x\otimes t^n,
\end{equation}
for $x\in\g_{(2n)}$, $n\in\frac{1}{2}\mathbb{Z}$ and $[d,k]=0$. The Lie algebra $\tilde{g}[\hnu]$ is isomorphic to $A_{2n-1}^{(2)}$, $D_n^{(2)}$, or $E_6^{(2)}$ depending on the choice of $L$, and is $\frac{1}{2}\mathbb{Z}$-graded.

We now recall the following result that gives $V_L^T$ the structure of a $\hat{\g}[\hnu]$-module:
\begin{thm}(Theorem 3.1 \cite{CalLM4}, Theorem 9.1 \cite{L1}, Theorem 3 \cite{FLM2})\label{reptheorem}
The representation of $\hat{\h}[\nu]$ on $V_L^T$ extends uniquely to a Lie algebra representation of $\hat{\g}[\hnu]$ on $V_L^T$ such that
\begin{equation*}
(x_{\alpha})_{(2n)}\otimes t^n\mapsto \xa{}{n}
\end{equation*}
for all $m\in\frac{1}{2}\mathbb{Z}$ and $\alpha\in L$. Moreover $V_L^T$ is irreducible as a $\hat{\g}[\hnu]$-module.
\end{thm}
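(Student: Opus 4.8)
The plan is to follow the strategy of Lepowsky's twisted vertex operator construction \cite{L1} and its specialization in \cite{CalLM4}, and simply verify that all the hypotheses of that general machinery hold in our normalized setup. Concretely, the statement is essentially a restatement of Theorem 9.1 of \cite{L1} and Theorem 3.1 of \cite{CalLM4} under the simplifying assumption $C=C_0$ (so $\hat L_\nu=\hat L$), and the task is to check that our particular choices --- the section $e$, the $2$-cocycle $\epsilon_{C_0}$, the lift $\hnu$ with $\hnu e_{\a_i}=e_{\nu\a_i}$, and the decomposition $\h=\h_{(0)}\oplus\h_{(1)}$ --- satisfy the compatibility conditions required there.

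First I would record that the representation of $\hat\h[\nu]$ on $V_L^T$ (the Fock space action of the twisted Heisenberg algebra on $S(\hat\h[\nu]^-)\otimes\CC[L/N]$) is already in place by construction. Next I would introduce the twisted vertex operators $Y^{\hnu}(\iota(e_\a),x)$ as written in \eqref{VertexOperators}, extract their components $\xa{}{m}$, and define the candidate action of a homogeneous element $x_\a\otimes t^n\in\g_{(2n)}\otimes t^n$ by $(x_\a)_{(2n)}\otimes t^n\mapsto\xa{}{n}$, extending linearly over $\h_{(0)}\otimes\CC[t,t^{-1}]\oplus\h_{(1)}\otimes t^{1/2}\CC[t,t^{-1}]$ by the Heisenberg action and sending $k\mapsto \mathrm{id}$. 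The core of the proof is then to check that this assignment respects the bracket relations \eqref{defineg}: the $[\h,\h]$, $[\h,x_\a]$, and $[x_\a,x_\beta]$ relations of $\g$ must be matched against the corresponding commutators of the component operators. This is exactly where the twisted commutator formulae of \cite{L1} (the $\delta$-function identities for $Y^{\hnu}(\iota(e_\a),x_1)Y^{\hnu}(\iota(e_\b),x_2)$) are invoked; the structure constants $\epsilon_{C_0}(\a,\b)$ and the factors $2^{-\langle\a,\a\rangle/2}$, $x^{\alpha_{(0)}+\langle\alpha_{(0)},\alpha_{(0)}\rangle/2-\langle\alpha,\alpha\rangle/2}$ appearing in the vertex operators are precisely calibrated so that the residues reproduce $\epsilon_{C_0}(\a,\b)x_{\a+\b}$ when $\a+\b\in\Delta$, reproduce $\epsilon_{C_0}(\a,-\a)\a_{(0)}$ plus the central term when $\a+\b=0$, and vanish otherwise. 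I would also note that, because our lift satisfies \eqref{cond2} and \eqref{liftingdef}, the map $\hnu$ on $\g$ agrees with the one induced from $\hnu$ on $V_L$, so the eigenspace decompositions on the two sides are compatible and the grading by $\tfrac12\ZZ$ is preserved. Uniqueness of the extension follows because $\hat\g[\hnu]$ is generated by $\hat\h[\nu]$ together with the operators $\xa{}{n}$, and the latter are forced by the prescribed formula. Irreducibility of $V_L^T$ as a $\hat\g[\hnu]$-module is then quoted directly from the corresponding statements in \cite{L1}, \cite{CalLM4}, \cite{FLM2}, since our lattice data (root lattices of type $A_{2n-1}$, $D_n$, $E_6$ with the standard $\nu$) is a special case of the hypotheses there.

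The main obstacle, and the only place where genuine checking (as opposed to citation) is needed, is confirming that our explicit cocycle $\epsilon_{C_0}$ and lift $\psi$ --- as given in \eqref{psilift} and the displayed formula for $\epsilon_{C_0}(\nu\a,\nu\beta)$ --- satisfy the cocycle-compatibility and $\hnu$-equivariance conditions demanded by \cite{L1} for the twisted Jacobi identity to hold. Since the excerpt already verifies $\hnu(e_\a e_\beta)=\hnu e_\a\,\hnu e_\beta$ and exhibits $\epsilon_{C_0}$ as a normalized $2$-cocycle for the commutator map $C_0=C$, these hypotheses are met, and the remainder is the standard computation carried out in Section 3 of \cite{CalLM4}; accordingly I would present the verification of the bracket relations in the $\a+\b\in\Delta$, $\a+\b=0$, and $\a+\b\notin\Delta\cup\{0\}$ cases and otherwise defer to \cite{L1} and \cite{CalLM4}.
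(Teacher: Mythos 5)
The paper gives no proof of this theorem: it is stated as a recalled result and attributed directly to Theorem 9.1 of \cite{L1}, Theorem 3.1 of \cite{CalLM4}, and Theorem 3 of \cite{FLM2}. Your outline is a correct and faithful account of how the proof in those sources goes, and you rightly identify that the only checking specific to this paper is the compatibility of the normalized section, the cocycle $\epsilon_{C_0}$, and the lift $\hnu$ (which the paper verifies in Section 2), so your approach is essentially the same as the paper's reliance on the cited references.
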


As in Section 2 of \cite{CalLM4} (also Section 6 of \cite{L1}) we have a tensor product grading on $V_L^T$ given by the action of $L^{\hnu}(0)$ with
\begin{equation}
\begin{aligned}
L^{\hnu}(0)1&=\frac{1}{16}\sum_{j=1}^{1}j(2-j)\text{dim }\h_{(j)}1\\
&=\frac{1}{16}\left(\text{dim }\h_{(1)}\right),
\end{aligned}
\end{equation}
Where
\be Y^{\hnu}(\omega,x)=\sum_{m\in\ZZ}L^{\nu}(m)x^{-n-2}.\ee
We will write $\wt(1)=\frac{\text{dim }\h_{(1)}}{16}$. For a homogeneous element $v\in V_L^T$ we have
\begin{equation}\label{wt}
\hat{L}^{\hnu}(0)v=\left(\wt (v) +\frac{\text{dim }\h_{(1)}}{16}\right)v.
\end{equation} 
For each of our special cases we have
\be\begin{aligned}
\wt(1)&=\frac{n-1}{16}, \text{ for } A_{2n-1}\\
\wt(1)&=\frac{1}{16}, \text{ for } D_n\\
\wt(1)&=\frac{1}{8}, \text{ for } E_6,\end{aligned}\ee
which carries over to the weight of any homogeneous element in (\ref{wt}).

By Proposition 6.3 of \cite{DL1} we have that
\begin{multline}\label{YomegaCommutator}
[Y^{\hnu}(\omega,x_1),Y^{\hnu}(\iota(e_\alpha),x_2)]\\
 = x_2^{-1}\frac{d}{dx_2}Y^{\hnu}(\iota(e_\alpha),x_2)\delta(x_1/x_2) - \frac{1}{2}\langle \alpha,\alpha \rangle x_2^{-1} Y^{\hnu}(\iota(e_\alpha),x_2)\frac{d}{dx_1}\delta(x_1/x_2),
\end{multline}
where $\delta(x) = \sum_{n \in \mathbb{Z}}x^n$. Taking $\mathrm{Res}_{x_2} \mathrm{Res}_{x_1}$ of $x_1 x_2^m$ times of both sides of (\ref{YomegaCommutator}) immediately gives
\begin{equation}
[L^{\hnu}(0),\xa{}{m}]=\left(-m-1+\frac{1}{2}\left<\alpha,\alpha\right>\right)\xa{}{m}
\end{equation}
and thus 
\begin{equation}
\wt(\xa{}{m})=-m-1+\frac{1}{2}\left<\alpha,\alpha\right>.
\end{equation}

Let 
\be\label{dualbasis}\mathcal{B^{\star}}=\{\lambda_{i}\in\mathbb{Q}\otimes_{\ZZ}L|1\leq i\leq l \text{ and } \left<\lambda_i,\a_j\right>=\delta_{i,j}\}\ee be the basis of the lattice dual to $L$ with respect to the basis $\mathcal{B}=\{\a_i|1\leq i\leq l\}$. The $\lambda_i$ are in the rational span of $\mathcal{B}$ and so using (\ref{zeroproj}) we have
\be (\lambda_i)_{(0)}=\frac{1}{2}(\lambda_i+\nu\lambda_i).\ee
We now use the eigenvalues of the operators $(\lambda_i)_{(0)}(0)$ to construct a new charge grading as follows. Let 
\be\label{charge1}
\mathfrak{o}_i=\{\lambda\in\mathcal{B}^{\star}|\lambda=\nu^{m}\lambda_i \text{ for some }m\in\mathbb{N}\},
\ee
that is, $\mathfrak{o}_i$ is the orbit of $\lambda_i$ under the action of $\nu$. Observe that the distinct $\mathfrak{o}_i$ form a partition of $\mathcal{B}^{\star}$. Now define
\be\label{charge2}
\lambda^{(i)}=\sum_{\lambda\in\mathfrak{o}_i}(\lambda)_{(0)}.\ee
Finally, we define our charge grading to be a tuple built from the eigenvalues of $\lambda^{(i)}(0)$ as $i$ runs over all distinct orbits $\mathfrak{o}_i$. That is 
\be
\text{ch}(\xa{}{m})=\left(\left<\a,\lambda^{(i_1)}\right>,\left<\a,\lambda^{(i_2)}\right>,\dots,\left<\a,\lambda^{(i_k)}\right>\right),\ee
where the sets $\mathfrak{o}_{i_j}$ are all distinct (and disjoint), with $i_1<i_2<\cdots <i_k$.  It is easy to see that 
\be
\text{ch}(\xa{}{m})=\left(d_{i_1}\left<\a,(\lambda_{i_1})_{(0)}\right>,d_{i_2}\left<\a,(\lambda_{i_2})_{(0)}\right>,d_{i_k}\left<\a,(\lambda_{i_k})_{(0)}\right> \right),
\ee
where $d_{i_j}=|\mathfrak{o}_{i_j}|$. Further if $\text{ch}(v)=(m_1,\dots, m_k)$ then we say the total charge of $v$ is $m_1+m_2+\cdots+m_k$. 

In this case of $A_{2n-1}$ the amounts to the $n$-tuple given by 
\be 
\text{ch}(\xa{}{m})=\left<2\left<\a,(\lambda_{1})_{(0)}\right>,\dots,2\left<\a,(\lambda_{n-1})_{(0)}\right>,\left<\a,(\lambda_{n})_{(0)}\right>\right).\ee
In the case of $D_n$ we have
\be\text{ch}(\xa{}{m})=\left<\left<\a,(\lambda_{1})_{(0)}\right>,\dots,\left<\a,(\lambda_{n-2})_{(0)}\right>,2\left<\a,(\lambda_{n-1})_{(0)}\right>\right).\ee
Finally in the $E_6$ case we have %\color{red}should we reorder the nodes???\color{black}
\be
\text{ch}(\xa{}{m})=\left(2\left<\a,(\lambda_{1})_{(0)}\right>,2\left<\a,(\lambda_{2})_{(0)}\right>,\left<\a,(\lambda_{3})_{(0)}\right>,\left<\a,(\lambda_{6})_{(0)}\right>\right).\ee

Consider the following $\nu$-stable Lie subalgebra of $\g$:
\begin{equation}
\n=\coprod_{\alpha\in\Delta_{+}}\mathbb{C}x_\alpha.\end{equation}\color{black}
and its $\hnu$-twisted affinization
\begin{equation}
\hat{\n}[\hnu]=\coprod_{m\in\mathbb{Z}}\n_{(m)}\otimes t^{m/2}\oplus\mathbb{C}k=\coprod_{m\in\frac{1}{2}\mathbb{Z}}\n_{(2m)}\otimes t^{m}\oplus\mathbb{C}k,
\end{equation}
where $\n_{(m)}$ is the $(-1)^m$-eigenspace of $\hnu$ in $\n$. As in \cite{CalLM4}, \cite{CalMPe}, and \cite{PS}, we take
\begin{equation}\label{nilrad}\begin{aligned}
\overline{\n}[\hnu]&=\coprod_{m\in\frac{1}{2}\mathbb{Z}}\n_{(2m)}\otimes t^m\\
\overline{\n}[\hnu]_{+}&=\coprod_{m\in\frac{1}{2}\mathbb{Z}_{\geq 0}}\n_{(2m)}\otimes t^m\\
\overline{\n}[\hnu]_{-}&=\coprod_{m\in\frac{1}{2}\mathbb{Z}_{<0}}\n_{(2m)}\otimes t^m
\end{aligned}\end{equation}

Now we will investigate the properties of operators from $\hat{\n}[\hnu]$ acting on $V_L^T$ by investigating the vertex operators (\ref{VertexOperators}) and their modes. Using equation (2.93) of \cite{CalLM4}, we have that
\begin{equation}\label{limit}
Y^{\hnu}(\hnu v,x)=\lim_{x^{\frac{1}{2}}\to(-1)x^{\frac{1}{2}}}Y^{\hnu}(v,x).
\end{equation}
In our setting ($ADE$), we have two cases to consider: when $\a$ is fixed under $\nu$ or when $\a$ is not fixed under $\nu$, with $\nu^{2}\a=\a$. In the case that $\a$ is fixed we have
 \be
 Y^{\hnu}(\iota(e_{\a}),x)=\lim_{x^{\frac{1}{2}}\to(-1)x^{\frac{1}{2}}}Y^{\hnu}(\iota(e_{\a}),x),
\ee
and thus
 \be
 Y^{\hnu}(\iota(e_{\a}),x)\in (\text{End }V_L^T)[[x,x^{-1}]].\ee
 In terms of component operators this implies that 
 \be\label{zero}\xa{}{m}=0 \text{ for all } m\in\frac{1}{2}+\ZZ.\ee
  In the other case we have that 
 \be
 Y^{\hnu}(\iota(e_{\nu\a}),x)=\lim_{x^{\frac{1}{2}}\to(-1)x^{\frac{1}{2}}}Y^{\hnu}(\iota(e_{\a}),x),
\ee
and thus
\be\label{nuoperator}\begin{aligned}
x_{\nu\a}^{\hnu}(m)&=\xa{}{m} \text{ for } m\in\ZZ\\
x_{\nu\a}^{\hnu}(m)&=-\xa{}{m} \text{ for } m\in\frac{1}{2}+\ZZ.\end{aligned}\ee

In our setting, the commutator formula among twisted vertex operators becomes
\begin{equation}
[Y^{\hnu}(u,x_1),Y^{\hnu}(v,x_2)] = \frac{1}{2}x_2^{-1}\mathrm{Res}_{x_0} \bigg(\sum_{j \in \mathbb{Z}/2\mathbb{Z}} \delta \bigg( (-1)^j\frac{(x_1 - x_0)^{1/2}}{x_2^{1/2}}\bigg) Y^{\hnu}(Y(\hnu^j u,x_0)v,x_2) \bigg).
\end{equation}
As a consequence, using the fact that $\left<\alpha,\beta\right>\geq -1$ for all $\alpha,\beta\in\Delta_{+}$ we have the following commutator formula for twisted vertex operators
\begin{equation}
\left[Y^{\hnu}(\iota(e_{\alpha}),x_1),Y^{\hnu}(\iota(e_{\beta}),x_2)\right]=x_2^{-1}\frac{1}{2}\sum_{j\in\mathbb{Z}/2\mathbb{Z}}\delta\left((-1)^{j}\frac{x_1^{\frac{1}{2}}}{x_2^{\frac{1}{2}}}\right)Y^{\hnu}\left(x_{\nu^j\alpha}(0)\iota(e_{\beta}),x_2\right),
\end{equation} 
leading to the following lemma:

\begin{lem}\label{comlemma}
For $\left<\a,\b\right>\in\Delta^{+}$ we have
\begin{equation}
\left[Y^{\hnu}(\iota(e_{\alpha}),x_1),Y^{\hnu}(\iota(e_{\beta}),x_2)\right]=0 \text{ when } \left<\nu^j\a,\b\right>\geq 0 \text{ for } 0\leq j\leq 1
\end{equation}
\begin{equation}\begin{aligned}\label{com1}
\left[Y^{\hnu}(\iota(e_{\alpha}),x_1),Y^{\hnu}(\iota(e_{\beta}),x_2)\right]&=x_2^{-1}\frac{1}{2}\epsilon_{C_0}(\alpha,\beta)\delta\left(\frac{x_1^{\frac{1}{2}}}{x_2^{\frac{1}{2}}}\right)Y^{\hnu}(\iota(e_{\alpha+\beta}),x_2) \\
&\text{ when } \left<\alpha,\b\right>=-1 \text{ and }\left<\nu\a,\b\right>\geq 0
\end{aligned}\end{equation}
\begin{equation}\begin{aligned}
\left[Y^{\hnu}(\iota(e_{\alpha}),x_1),Y^{\hnu}(\iota(e_{\beta}),x_2)\right]=x_2^{-1}\frac{1}{2}\sum_{j=0}^{1}\epsilon_{C_0}(\nu^j\a,\b)\delta\left((-1)^{j}\frac{x_1^{\frac{1}{2}}}{x_2^{\frac{1}{2}}}\right)Y^{\hnu}\left(\iota(e_{\nu^j\a+\beta}),x_2\right)\\
\text{ when } \left<\a,\b\right>=\left<\nu\a,\b\right>=-1.
\end{aligned}\end{equation}  
\end{lem}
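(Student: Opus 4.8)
The plan is to derive Lemma \ref{comlemma} directly from the commutator formula for twisted vertex operators stated just above it, namely
\[
\left[Y^{\hnu}(\iota(e_{\alpha}),x_1),Y^{\hnu}(\iota(e_{\beta}),x_2)\right]=x_2^{-1}\frac{1}{2}\sum_{j\in\mathbb{Z}/2\mathbb{Z}}\delta\left((-1)^{j}\frac{x_1^{\frac{1}{2}}}{x_2^{\frac{1}{2}}}\right)Y^{\hnu}\left(x_{\nu^j\alpha}(0)\iota(e_{\beta}),x_2\right).
\]
The entire content of the lemma is the evaluation of the summand $x_{\nu^j\alpha}(0)\iota(e_{\beta})$ for $j=0,1$, using the structure of the (untwisted) vertex operator $Y(\iota(e_\gamma),x)$ acting on the lattice vertex algebra $V_L$. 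So the first step is to recall that, by the standard theory (e.g. as encoded in the bracket on $\g$ given earlier in Section 2), $x_{\gamma}(0)\iota(e_{\beta})$ is governed entirely by the value of $\left<\gamma,\beta\right>$: it is $0$ when $\left<\gamma,\beta\right>\geq 0$ (because then $\iota(e_\gamma)$ and $\iota(e_\beta)$ have no singular OPE terms contributing to the $0$-th mode), and it equals $\epsilon_{C_0}(\gamma,\beta)\iota(e_{\gamma+\beta})$ when $\left<\gamma,\beta\right>=-1$. (We use here that $\left<\alpha,\beta\right>\geq -1$ for all $\alpha,\beta\in\Delta_+$, which is the crucial positivity input and follows from the $ADE$ root system structure, so the only cases are $\geq 0$ and $=-1$.)

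With that computation in hand, the three cases of the lemma are exactly the three combinatorial possibilities for the pair $(\left<\alpha,\beta\right>,\left<\nu\alpha,\beta\right>)$ given that both are $\geq -1$: namely (i) both $\geq 0$, (ii) one equal to $-1$ and the other $\geq 0$, and (iii) both equal to $-1$. In case (i), both summands $x_{\nu^j\alpha}(0)\iota(e_\beta)$ vanish, giving the first displayed equation. In case (ii), writing it as $\left<\alpha,\beta\right>=-1$, $\left<\nu\alpha,\beta\right>\geq 0$ (the other arrangement being symmetric up to relabeling and the $j\leftrightarrow j$ sum structure), only the $j=0$ term survives, and it equals $x_2^{-1}\tfrac12\epsilon_{C_0}(\alpha,\beta)\delta(x_1^{1/2}/x_2^{1/2})Y^{\hnu}(\iota(e_{\alpha+\beta}),x_2)$, which is \eqref{com1}. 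In case (iii), both terms survive and we get the sum over $j=0,1$ of $\epsilon_{C_0}(\nu^j\alpha,\beta)$ against $\delta((-1)^j x_1^{1/2}/x_2^{1/2})$ times $Y^{\hnu}(\iota(e_{\nu^j\alpha+\beta}),x_2)$. One small bookkeeping point: in case (ii) one should note that the case $\left<\alpha,\beta\right>\geq 0$, $\left<\nu\alpha,\beta\right>=-1$ is not literally listed but is covered by applying the formula to the pair $(\nu\alpha,\beta)$ and using $\hnu$-equivariance $Y^{\hnu}(\iota(e_{\nu\alpha}),x)$ related to $Y^{\hnu}(\iota(e_\alpha),x)$ via \eqref{nuoperator}/\eqref{zero}; alternatively one simply observes the statement is written under the standing hypothesis ``$\left<\a,\b\right>=-1$ and $\left<\nu\a,\b\right>\geq 0$.''

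I expect the main obstacle to be purely notational rather than conceptual: one must be careful that when $\alpha$ is $\nu$-fixed, the two summands $j=0,1$ coincide and the identities \eqref{zero}, \eqref{nuoperator} are what reconcile the ``$\tfrac12\sum_j$'' with the expected single term, so that in case (iii) for instance the hypothesis $\left<\alpha,\beta\right>=\left<\nu\alpha,\beta\right>=-1$ is genuinely the two-distinct-terms situation. I would also double-check the cocycle factor: the formula out of the commutator identity produces $x_{\nu^j\alpha}(0)\iota(e_\beta)=\epsilon_{C_0}(\nu^j\alpha,\beta)\iota(e_{\nu^j\alpha+\beta})$, which matches the definition of the Lie bracket on $\g$ via $[x_{\nu^j\alpha},x_\beta]=\epsilon_{C_0}(\nu^j\alpha,\beta)x_{\nu^j\alpha+\beta}$ when $\nu^j\alpha+\beta\in\Delta$; the only thing to verify is that $\left<\nu^j\alpha,\beta\right>=-1$ for $\alpha,\beta\in\Delta_+$ indeed forces $\nu^j\alpha+\beta\in\Delta_+$ (and in particular $\neq 0$, since $\alpha,\beta$ both positive rules out $\nu^j\alpha=-\beta$), which is immediate from the $ADE$ root string structure. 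Hence no serious difficulty; the proof is a direct specialization of the general twisted commutator formula combined with elementary root-system case analysis.
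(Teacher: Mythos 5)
Your proposal is correct and follows essentially the same route as the paper: the paper's proof is a one-line appeal to the standard fact $x_{\alpha}(-\langle\alpha,\beta\rangle-1)e_{\beta}=\epsilon_{C_0}(\alpha,\beta)e_{\alpha+\beta}$, substituted into the twisted commutator formula displayed just before the lemma, which is exactly the computation you carry out (your version simply makes the case analysis on $(\langle\alpha,\beta\rangle,\langle\nu\alpha,\beta\rangle)$ and the vanishing of $x_{\nu^j\alpha}(0)\iota(e_\beta)$ for $\langle\nu^j\alpha,\beta\rangle\geq 0$ explicit).
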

\begin{proof}
This follows from the standard vertex algebra fact that $x_{\alpha}\left(-\left<\a,\b\right>-1\right)e_{\b}=\epsilon_{C_0}(\a,\b)e_{\a+\b}$.
\end{proof}
We can use Lemma \ref{comlemma} to make the following statement regarding component operators.

\begin{cor}\label{comcor}
For $\a,\b\in \Delta_{+}$ and $r,s\in\frac{1}{2}\ZZ$ chosen from a suitable subset so that the operators are nonzero (see (\ref{zero})), we have
\be\label{comcor2}
[\xa{}{r},\xb{s}]=0 \text{ when } \left<\a,\b\right>\geq 0 \text{ and } \left<\nu\a,\b\right>\geq 0,\ee
\be\label{comcor2'}
[\xa{}{r},\xb{s}]=\frac{1}{2} \epsilon_{C_0}(\alpha,\beta)x^{\hnu}_{\a+\b}(r+s) \text{ when } \left<\a,\b\right>=-1 \text{ and }  \left<\nu\a,\b\right>\geq 0,\ee
and
\be\label{comcor3}
[\xa{}{r},\xb{s}]= \frac{1}{2} \epsilon_{C_0}(\a,\b)x^{\hnu}_{\a+\b}(r+s) + (-1)^{2r}\frac{1}{2} \epsilon_{C_0}(\nu\a,\b)x^{\hnu}_{\nu\a+\b}(r+s) 
 \text{ when } \left<\a,\b\right>=-1 \text{ and }  \left<\nu\a,\b\right>=-1.\ee
\end{cor}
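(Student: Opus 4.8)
The plan is to derive Corollary \ref{comcor} directly from Lemma \ref{comlemma} by extracting appropriate coefficients of the generating-function (delta-function) identities. Throughout, recall from (\ref{VertexOperators}) that $Y^{\hnu}(\iota(e_\alpha),x) = \sum_{m\in\frac{1}{2}\ZZ}\xa{}{m}x^{-m-\frac{\langle\alpha,\alpha\rangle}{2}}$, and similarly for $\beta$ and for $\alpha+\beta$ (resp. $\nu^j\alpha+\beta$); note $\langle\alpha+\beta,\alpha+\beta\rangle = \langle\alpha,\alpha\rangle+\langle\beta,\beta\rangle + 2\langle\alpha,\beta\rangle$, so in the cases where $\langle\alpha,\beta\rangle=-1$ we get $\langle\alpha+\beta,\alpha+\beta\rangle = \langle\alpha,\alpha\rangle+\langle\beta,\beta\rangle-2$, which is exactly $2$ for roots of a simply-laced type. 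The first assertion (\ref{comcor2}) is immediate: when $\langle\nu^j\alpha,\beta\rangle\geq 0$ for $0\le j\le 1$, the corresponding bracket of vertex operators vanishes by Lemma \ref{comlemma}, hence every bracket of component operators $[\xa{}{r},\xb{s}]$ vanishes, these being coefficients in the expansion of the (zero) commutator.

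For (\ref{comcor2'}), I would start from (\ref{com1}):
\begin{equation*}
\left[Y^{\hnu}(\iota(e_{\alpha}),x_1),Y^{\hnu}(\iota(e_{\beta}),x_2)\right]=\frac{1}{2}\epsilon_{C_0}(\alpha,\beta)\,x_2^{-1}\delta\!\left(\frac{x_1^{1/2}}{x_2^{1/2}}\right)Y^{\hnu}(\iota(e_{\alpha+\beta}),x_2).
\end{equation*}
Expanding the left side as $\sum_{r,s}[\xa{}{r},\xb{s}]\,x_1^{-r-\langle\alpha,\alpha\rangle/2}x_2^{-s-\langle\beta,\beta\rangle/2}$ and using $x_2^{-1}\delta(x_1^{1/2}/x_2^{1/2}) = x_1^{-1}\sum_{n\in\frac{1}{2}\ZZ} x_1^{n} x_2^{-n-1}$ (the standard delta-function expansion, summed over $\frac{1}{2}\ZZ$ since the argument is a half-integral power), I would match powers of $x_1$ and $x_2$. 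The coefficient extraction forces the right side to contribute only when the $x_1$-exponent on the left equals a fixed value, collapsing the sum; carefully bookkeeping the shifts coming from $\langle\alpha,\alpha\rangle/2$, $\langle\beta,\beta\rangle/2$ and $\langle\alpha+\beta,\alpha+\beta\rangle/2$ (which differ by exactly $1$ because $\langle\alpha,\beta\rangle=-1$) yields $[\xa{}{r},\xb{s}] = \frac{1}{2}\epsilon_{C_0}(\alpha,\beta)\,x^{\hnu}_{\alpha+\beta}(r+s)$, as claimed.

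The case (\ref{comcor3}) is handled the same way but starting from the third identity of Lemma \ref{comlemma}, which carries the extra term $j=1$ with $\delta\!\left(-x_1^{1/2}/x_2^{1/2}\right)$ and factor $\epsilon_{C_0}(\nu\alpha,\beta)$. The only new feature is the sign $(-1)^j$ inside the delta: expanding $x_2^{-1}\delta(-x_1^{1/2}/x_2^{1/2}) = x_1^{-1}\sum_{n\in\frac{1}{2}\ZZ}(-1)^{2n}x_1^{n}x_2^{-n-1}$ and matching against the $x_1^{-r-\langle\alpha,\alpha\rangle/2}$ term shows the $j=1$ contribution picks up the factor $(-1)^{2r}$ (equivalently, it is present with a $+$ sign when $r\in\ZZ$ and a $-$ sign when $r\in\frac{1}{2}+\ZZ$), giving exactly the stated formula. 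The main obstacle — really the only delicate point — is tracking these half-integral power shifts and the $(-1)^{2r}$ sign correctly through the coefficient comparison; once the exponent bookkeeping is set up consistently with (\ref{VertexOperators}) and (\ref{zero}), everything else is routine. I would also remark that the restriction to "a suitable subset" of $r,s$ in the statement is precisely so that both sides are honestly nonzero, per (\ref{zero}): if $\alpha$ is $\nu$-fixed then $\xa{}{r}=0$ for $r\in\frac{1}{2}+\ZZ$, and likewise for $\beta$ and for the roots appearing on the right, so the identities are asserted only where all operators involved are defined by genuinely nonzero modes.
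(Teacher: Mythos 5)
Your coefficient-extraction argument from Lemma \ref{comlemma} is exactly the route the paper intends (the paper states the corollary with no further proof beyond invoking that lemma), and your bookkeeping of the quadratic-form shifts and of the $(-1)^{2r}$ sign arising from the $j=1$ delta term is correct. The only blemish is the stray $x_1^{-1}$ in your written expansion $x_2^{-1}\delta\bigl(x_1^{1/2}/x_2^{1/2}\bigr)=x_1^{-1}\sum_{n\in\frac{1}{2}\ZZ}x_1^{n}x_2^{-n-1}$ (the correct identity has no such prefactor, since $\delta(x)=\sum_{n\in\ZZ}x^n$ gives $x_2^{-1}\delta\bigl(x_1^{1/2}/x_2^{1/2}\bigr)=\sum_{m\in\frac{1}{2}\ZZ}x_1^{m}x_2^{-m-1}$ directly); this is a transcription slip that does not affect your stated conclusions.
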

We now present two technical Lemmas that will be of use in the simplification of (\ref{comcor3}).

\begin{lem}\label{techlemma1}
Suppose that $\a,\b\in\Delta_{+}$ such that $\left<\a,\b\right>=\left<\nu\a,\b\right>=-1$. Then either $\nu\a=\a$ or $\nu\b=\b$.
\end{lem}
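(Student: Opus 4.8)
The plan is to argue case-by-case through the three root systems $A_{2n-1}$, $D_n$, $E_6$, using the explicit coordinate descriptions of the roots and of $\nu$ given earlier in the section. The key observation is that the hypothesis $\langle\a,\b\rangle=\langle\nu\a,\b\rangle=-1$ is very restrictive: it says $\b$ has inner product $-1$ with both $\a$ and $\nu\a$, so if $\a\neq\nu\a$ then $\b$ must ``see'' two distinct positive roots $\a$ and $\nu\a$ in the same way. In a simply-laced root system $\langle\a,\b\rangle=-1$ forces $\a+\b\in\Delta$, so the condition becomes: both $\a+\b$ and $\nu\a+\b$ are roots. I would exploit this together with the fact (also valid in the simply-laced case) that if $\langle\a,\nu\a\rangle$ were $-1$ then $\a+\nu\a$ would be a root fixed by $\nu$ — but in each of our cases the only $\nu$-fixed roots have a specific form, which I would check is incompatible with being a sum $\a+\nu\a$ for $\a$ not $\nu$-fixed. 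So one reduces to the situation $\langle\a,\nu\a\rangle\geq 0$, and similarly one may assume $\langle\b,\nu\b\rangle\geq 0$, as these are the only interesting sub-cases.

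Concretely, for $A_{2n-1}$ write $\a=\varepsilon_i-\varepsilon_j$ and $\b=\varepsilon_p-\varepsilon_q$ with $i\neq j$, $p\neq q$, and recall $\nu(\varepsilon_i-\varepsilon_j)=\varepsilon_{2n-j+1}-\varepsilon_{2n-i+1}$ from the excerpt. The condition $\langle\a,\b\rangle=-1$ means $\{i,j\}\cap\{p,q\}$ is a single element with the matching of signs that produces $-1$ (i.e. $j=p$ or $i=q$ after sorting). Then $\langle\nu\a,\b\rangle=-1$ imposes the same kind of overlap between $\{2n-j+1,2n-i+1\}$ and $\{p,q\}$. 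Writing out the possibilities, if $\a$ is not $\nu$-fixed then the two overlap conditions force $\{p,q\}$ to be forced into a configuration that makes $\b$ itself $\nu$-fixed; I would enumerate the handful of sign/index cases to confirm this. The $D_n$ case is handled the same way with $\a=\varepsilon_i\pm\varepsilon_j$ and the action $\nu(\pm(\varepsilon_i\pm\varepsilon_j))=\pm(\varepsilon_i\pm(-1)^{\delta_{j,n}}\varepsilon_j)$: here $\nu$ only flips the sign of the $\varepsilon_n$-component, so $\a$ is not $\nu$-fixed precisely when $\a$ involves $\varepsilon_n$, and the double inner-product condition with $\b$ then pins down that $\b$ cannot involve $\varepsilon_n$ nontrivially in a $\nu$-breaking way, i.e. $\nu\b=\b$.

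For $E_6$ I would use the $\mathbb{R}^3\times\mathbb{R}^3\times\mathbb{R}^3$ model from (\ref{syme6basis}) and (\ref{nue6}), where $\nu(\mathbf{v}_1,\mathbf{v}_2,\mathbf{v}_3)=(\mathbf{v}_1,\mathbf{v}_3,\mathbf{v}_2)$; a root is $\nu$-fixed exactly when its second and third $\mathbb{R}^3$-blocks coincide. The two families of roots (the ``$\varepsilon_i-\varepsilon_j$ in one block'' type and the ``27 triples'' type) have to be checked against each other; since $\nu$ just swaps blocks $2$ and $3$, the inner product $\langle\nu\a,\b\rangle$ is $\langle\a,\b\rangle$ with $\b$'s blocks $2,3$ swapped, and requiring both to equal $-1$ while $\a\neq\nu\a$ forces a symmetry on $\b$ that makes $\nu\b=\b$. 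I expect the main obstacle to be the bookkeeping in this $E_6$ case — there are enough root types and block-configurations that one must be careful not to miss a case — but conceptually it is the same mechanism as in types $A$ and $D$: the hypothesis is symmetric in a way that cannot be ``split unevenly'' between $\a$ and $\b$ unless at least one of them already lies in the $\nu$-fixed subsystem. An alternative, more uniform argument I would try first: pass to the fixed-point subalgebra $\g_{(0)}$, note that $\a_{(0)}:=\a+\nu\a$ (up to scalar) is a root of the folded system, and observe that $\langle\a,\b\rangle=\langle\nu\a,\b\rangle=-1$ forces $\langle\a_{(0)},\b\rangle=-2$, which in a root system of type $B$, $C$, $F_4$ (the foldings of $A_{2n-1},D_n,E_6$) together with the short/long length data constrains $\b$ to be long, hence $\nu$-fixed — this would dispatch all three cases at once, and I would fall back on the explicit computation only if the length bookkeeping in the non-reduced or mixed-type folded systems proves delicate.
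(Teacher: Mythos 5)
Your main argument --- writing the roots in the explicit coordinates for each of $A_{2n-1}$, $D_n$, $E_6$, using the stated action of $\nu$, and showing that the two overlap conditions $\langle\alpha,\beta\rangle=\langle\nu\alpha,\beta\rangle=-1$ force one of the two roots into a $\nu$-fixed configuration --- is exactly the paper's proof, down to the reduction in type $E_6$ to the vanishing of $\langle\mathbf{v}_2-\mathbf{v}_3,\mathbf{w}_2-\mathbf{w}_3\rangle$; the enumerations you defer are precisely the ones the paper carries out. Two remarks. First, the digression about ruling out $\langle\alpha,\nu\alpha\rangle=-1$ is unnecessary: $\langle\nu\alpha,\alpha\rangle\in 2\ZZ$ is a standing hypothesis on the lattice (condition (\ref{isomcond})), so that case never arises. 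Second, your ``uniform'' alternative is a genuinely different route and can be made rigorous more cleanly than your length-bookkeeping sketch suggests: with $\gamma_{(0)}=\tfrac{1}{2}(\gamma+\nu\gamma)$ one computes $\langle\alpha_{(0)},\beta_{(0)}\rangle=\tfrac{1}{2}\left(\langle\alpha,\beta\rangle+\langle\nu\alpha,\beta\rangle\right)=-1$, while $\langle\gamma_{(0)},\gamma_{(0)}\rangle$ equals $2$ if $\nu\gamma=\gamma$ and $1$ otherwise (again by (\ref{isomcond})); if neither $\alpha$ nor $\beta$ were fixed, Cauchy--Schwarz would force $\beta_{(0)}=-\alpha_{(0)}$, which is impossible since both are nonzero nonnegative combinations of simple roots. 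That argument dispatches all three types at once and avoids the $E_6$ case analysis entirely; the paper does not take this route, but it would be a legitimate and shorter proof.
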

\begin{proof}
We separate our argument into cases depending on type. First considering the $A_{2n-1}$ case. 

Suppose $\a,\b\in\Delta_+$ are such that $\left<\a,\b\right>=\left<\nu\a,\b\right>=-1$. Since we are dealing with positive roots of $A_{2n-1}$ we may write 
\be
\a=\EE_i-\EE_j \text{ with }1\leq i<j\leq 2n
\ee
and
\be
\b=\EE_k-\EE_l \text{ with } 1\leq k<l\leq 2n.\ee
The isometry $\nu$ acts as follows on the roots written in this form
\be
\nu\a=\EE_{2n+1-j}-\EE_{2n+1-i}.\ee
Observe that we have 
\be\label{innerproductexpansion}
-1=\left<\a,\b\right>=\delta_{i,k}+\delta_{j,l}-\delta_{j,k}-\delta_{i,l}.\ee
We claim that only one of $\delta_{j,k},\delta_{i,l}$ is nonzero. Indeed, otherwise we have 
\be 
\b=\EE_{k}-\EE_{l}=\EE_{j}-\EE_{i}=\a\notin\Delta_{+}.\ee
Thus we have $i\neq k$, $j\neq l$, and either $j=k$ or $i=l$ (but not both). Without loss of generality we take $j=k$, and thus $i\neq l$. Now considering an expansion of $\left<\nu\a,\b\right>$ similar to (\ref{innerproductexpansion}) we have 
\be\label{typeone}
2n+1-i=j
\ee
or
\be\label{typetwo}
2n+1-j=l.
\ee
If (\ref{typeone}) holds we have $\a=\EE_{i}-\EE_{2n+1-i}$ which is fixed under $\nu$, while if (\ref{typetwo}) holds we have $\b=\EE_j-\EE_{2n+1-j}$ with is also fixed under $\nu$.

Now we consider the $D_n$ case, here we may write positive roots in the form 
\be 
\a=\EE_{i}+(-1)^a\EE_{j}\text{ with } 1\leq i<j\leq n\ee
and
\be
\b=\EE_{k}+(-1)^b\EE_{l}\text{ with }1\leq k<l\leq n,\ee
where $a,b\in\{0,1\}$. In this setting we have 
\be
\nu\a=\EE_{i}+(-1)^{a+\delta_{j,n}}\EE_j.\ee
We may also write 
\be
-1=\left<\a,\b\right>=\delta_{i,k}+(-1)^a\delta_{j,k}+(-1)^b\delta_{i,l}+(-1)^{a+b}\delta_{j,l}\ee
and
\be
-1=\left<\nu\a,\b\right>=\delta_{i,k}+(-1)^{a+\delta_{j,n}}\delta_{j,k}+(-1)^b\delta_{i,l}+(-1)^{a+b+\delta_{j,n}}\delta_{j,l}.\ee
Using this to expand $\left<\a,\b\right>-\left<\nu\a,\b\right>=0$ we have
\be
\left((-1)^{a}-(-1)^{a+\delta_{j,n}}\right)\delta_{j,k}-\left((-1)^{a+b}-(-1)^{a+b+\delta_{j,n}}\right)\delta_{j,l}=0.\ee
We now claim that we cannot have both $j=k$ and $j=l$, otherwise we would have $\b=2\epsilon_{j}\notin\Delta_{+}$. This leaves us with the following
\begin{itemize}
\item[Case 1:] $j\neq k$ and $j\neq l$ and thus at least one of $j$ or $l$ is not $n$ and the corresponding root is fixed under $\nu$.
\item[Case 2:] $j=k$ and $j\neq l$ and thus $(-1)^a=(-1)^{a+\delta_{j,n}}$ which implies $j\neq n$ and $\a$ is fixed.
\item[Case 3:] $j\neq k$ and $j=l$ and thus $(-1)^{a+b}=(-1)^{a+b+\delta_{j,n}}$ which implies $j\neq n$ and $\a$ is fixed.
\end{itemize}

Now we consider the $E_6$ case. Here we use a symmetric version of the root system as described above. We take the root system to be made up of nine dimensional vectors which we consider of the form $(\mathbf{v}_1,\mathbf{v}_2,\mathbf{v}_3)\in \mathbb{R}^3\times\mathbb{R}^3\times \mathbb{R}^3$. The positive roots are of two main types, either one of $\mathbf{v}_i=\EE_{i}-\EE_j$ with $1\leq i< j\leq3$ while the other two are the zero vector, or one of the 27 combinations where each $\mathbf{v}_i$ is taken from $\{-\frac{2}{3}\EE_1+\frac{1}{3}\EE_2+\frac{1}{3}\EE_3, \frac{1}{3}\EE_1-\frac{2}{3}\EE_2+\frac{1}{3}\EE_3,\frac{1}{3}\EE_1+\frac{1}{3}\EE_2-\frac{2}{3}\EE_3\}$.
In this setting we take the roots to be
\be\begin{aligned}
\a_1&=(\mathbf{0},\mathbf{0},\EE_2-\EE_3)\\
\a_2&=(\mathbf{0},\mathbf{0},\EE_1-\EE_2)\\
\a_3&=(-\frac{1}{3}\EE_1+\frac{2}{3}\EE_2-\frac{1}{3}\EE_3,-\frac{2}{3}\EE_1+\frac{1}{3}\EE_2+\frac{1}{3}\EE_3,-\frac{2}{3}\EE_1+\frac{1}{3}\EE_2+\frac{1}{3}\EE_3)\\
\a_4&=(\mathbf{0},\EE_1-\EE_2,\mathbf{0})\\
\a_5&=(\mathbf{0},\EE_2-\EE_3,\mathbf{0})\\
\a_6&=(\EE_2-\EE_3,\mathbf{0},\mathbf{0}).
\end{aligned}\ee
We have that
\be \nu(\mathbf{v}_1,\mathbf{v}_2,\mathbf{v}_3)=(\mathbf{v}_1,\mathbf{v}_3,\mathbf{v}_2).\ee
Finally, we let $\a=(\mathbf{v}_1,\mathbf{v}_2,\mathbf{v}_3)$ and $\b=(\mathbf{w}_1,\mathbf{w}_2,\mathbf{w}_3)$ and expand the inner product 
\be
\left<\a,\b\right>=\left<\mathbf{v}_1,\mathbf{w}_1\right>+\left<\mathbf{v}_2,\mathbf{w}_2\right>+\left<\mathbf{v}_3,\mathbf{w}_3\right>\ee
and notice that the equation $\left<\a,\b\right>-\left<\nu\a,\b\right>=0$ implies that 
\be\label{lastcondition}
\left<\mathbf{v}_2-\mathbf{v}_3,\mathbf{w}_2-\mathbf{w}_3\right>=0.\ee
Now we make the observation that
\be \mathbf{v}_2-\mathbf{v}_3,\mathbf{w}_2-\mathbf{w}_3\in\{\EE_i-\EE_j|1\leq i,j\leq 3\}.\ee

It is clear that all choices of $\mathbf{v}_i$ and $\mathbf{w}_j$ that make $\a,\b\in\Delta_+$, none of them allow for (\ref{lastcondition}) to hold unless 
$\mathbf{v}_2-\mathbf{v}_3=0$ (and thus $\a$ is fixed) or $\mathbf{w}_2-\mathbf{w}_3=0$ (and thus $\b$ is fixed).
\end{proof}

In view of Lemma \ref{techlemma1}, if $\left<\a,\b\right>=-1 \text{ and }  \left<\nu\a,\b\right>=-1$, then we can write
(\ref{comcor3}) as
\be
[\xa{}{r},\xb{s}]=\left(\frac{1}{2} \epsilon_{C_0}(\a,\b) + (-1)^{2r}\frac{1}{2} \epsilon_{C_0}(\a,\b)\right)x^{\hnu}_{\a+\b}(r+s) = \epsilon_{C_0}(\a,\b)x^{\hnu}_{\a+\b}(r+s) 
\ee
if $\nu \a = \a$ (which also implies $r \in \ZZ$). Finally, if $\nu \a \neq \a$ (in which case $\nu \b = \b$ and $s \in \ZZ$) we have
\begin{eqnarray*}
[\xa{}{r},\xb{s}]&=&-[\xb{s}, \xa{}{r}]\\
&=& -\epsilon_{C_0}(\beta, \alpha) x^{\hnu}_{\a+\b}(r+s). 
\end{eqnarray*}

\section{Principal subspaces}
Following \cite{CalLM4} and \cite{CalMPe} (cf. \cite{CalLM1}-\cite{CalLM3}, \cite{FS1} - \cite{FS2}, and other works involving principal subspaces) we define the principal subspace of a highest weight $\tilde{\g}[\hnu]$-module:
\begin{defn}For any standard $\tilde{\g}[\hnu]$-module $V$ with highest weight vector $v$, define its principal subspace $W$ to be 
\begin{equation*}
W=U\left(\overline{\n}[\hnu]\right)\cdot v.
\end{equation*}
\end{defn}

Denote by $W_L^T$ the principal subspace of the standard $\tilde{\g}[\hnu]$-module $V_L^T$,
\begin{equation}
W_L^T=U\left(\overline{\n}[\hnu]\right)\cdot v_{\Lambda},
\end{equation}
where $\Lambda\in\left(\h_{(0)}\oplus\mathbb{C}k\oplus\mathbb{C}d\right)^{*}$ is the fundamental weight of $\tilde{\g}[\hnu]$ defined by $\left<\Lambda,k\right>=1$, $\left<\Lambda,\h_{(0)}\right>=0$, and $\left<\Lambda,d\right>=0$, and $v_{\Lambda}$ is a highest weight vector of $V_L^T$. We have
\begin{equation}
W_L^T=U\left(\overline{\n}[\hnu]_{-}\right)\cdot v_{\Lambda}.
\end{equation}
We take $v_{\Lambda}=1=1\otimes 1\in V_L^T (\cong S[\nu]\otimes U_T)$.

Consider the surjection
\begin{equation}
\begin{aligned}
F_{\Lambda}:U\left(\hat{\g}[\hnu]\right)&\to V_L^T\\
a&\mapsto a\cdot \vL
\end{aligned}\end{equation}
and its restriction
\begin{equation}\begin{aligned}
f_{\Lambda}:U\left(\overline{\n}[\hnu]\right)&\to W_L^T\\
a&\mapsto a\cdot \vL
\end{aligned}\end{equation}
As in previous work (\cite{CalLM1}-\cite{CalLM4}, \cite{CalMPe},and others), our main goal is to describe the kernel of $f_{\Lambda}$, giving us a presentation of $W_L^T$.

Following \cite{CalLM4}(cf. \cite{CalLM1}-\cite{CalLM3}) we set
\begin{equation}
N_L^T=U\left(\hat{\g}[\hnu]\right)\otimes_{U\left(\hat{\g}[\hnu]\right)_{\geq 0}}\mathbb{C}\vL^N,
\end{equation}
i.e. the generalized Verma module where,
\begin{equation}
\hat{\g}[\hnu]_{\geq 0}=\coprod_{n\geq 0}\g_{(n)}\otimes t^{n/2}\oplus\mathbb{C}k.
\end{equation}
We also use the principal subspaces of the generalized Verma module
\begin{equation}
W_L^{T,N}=U\left(\overline{\n}[\hnu]\right)\cdot \vL^N\subset N_L^T.
\end{equation}
Now we define the following related surjections
\begin{equation}
\begin{aligned}
F^N_{\Lambda}:U\left(\hat{\g}[\hnu]\right)&\to N_L^T\\
a&\mapsto a\cdot \vL^N\\
f^N_{\Lambda}:U\left(\overline{\n}[\hnu]\right)&\to W_L^{T,N}\\
a&\mapsto a\cdot \vL^N
\end{aligned}\end{equation}
as well as
\begin{equation}\begin{aligned}
\Pi:N_L^T&\to V_L^T\\
a\cdot\vL^N&\mapsto a\cdot\vL\\
\pi:W_L^{T,N}&\to W_L^T\\
a\cdot\vL^N&\mapsto a\cdot\vL.
\end{aligned}\end{equation}

We now look at some results involving the vertex operators $Y^{\hnu}(e^{\a},x)$. These will be the source of our description of $\text{Ker}(f_{\Lambda})$. Using the fact
\begin{equation}
E^{+}(\alpha,x_1)E^{-}(\beta,x_2)=\prod_{p\in\mathbb{Z}/2\mathbb{Z}}\left(1-(-1)^p\frac{x_2^{\frac{1}{2}}}{x_1^{\frac{1}{2}}}\right)^{\left<\nu^p\alpha,\beta\right>}E^{-}(\beta,x_2)E^{+}(\alpha,x_1),\end{equation}
 (cf. \cite{L1}, \cite{CalLM4}) we have the following results:

\begin{prop}\label{relationgens} For $\alpha,\beta\in\Delta^{+}$ we have
\begin{align}
&Y^{\hnu}(\iota(e_{\a}),x)Y^{\hnu}(\iota(e_{\b}),x)=0, \text{ when } \left<\nu^j\a,\b\right>\geq 0 \text{ for } 0\leq j\leq 2\\
&\label{pt2}\lim_{x_2^{\frac{1}{2}}\to x_1^{\frac{1}{2}}}\left(x_1^{\frac{1}{2}}-x_2^{\frac{1}{2}}\right)Y^{\hnu}(\iota(e_{\a}),x_1)Y^{\hnu}(\iota(e_{\b}),x_2)=0, \text{ when } \left<\alpha,\beta\right>=-1 \text{ and } \left<\nu\alpha,\beta\right>\geq 0 \\
&\lim_{x_2^{\frac{1}{2}}\to x_1^{\frac{1}{2}}}\left(x_1^{\frac{1}{2}}+x_2^{\frac{1}{2}}\right)Y^{\hnu}(\iota(e_{\a}),x_1)Y^{\hnu}(\iota(e_{\b}),x_2)=0, \text{ when } \left<\alpha,\beta\right>\geq 0 \text{ and } \left<\nu\alpha,\beta\right>=-1\\
&\lim_{x_2^{\frac{1}{2}}\to x_1^{\frac{1}{2}}}\left(x_1-x_2\right)Y^{\hnu}(\iota(e_{\a}),x_1)Y^{\hnu}(\iota(e_{\b}),x_2)=0, \text{ when } \left<\nu^j\alpha,\beta\right>=-1 \text{ for } 0\leq j\leq 1.
\end{align}
\end{prop}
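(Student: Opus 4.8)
The plan is to derive all four identities from the single product formula
\[
E^{+}(\alpha,x_1)E^{-}(\beta,x_2)=\prod_{p\in\mathbb{Z}/2\mathbb{Z}}\left(1-(-1)^p\frac{x_2^{\frac{1}{2}}}{x_1^{\frac{1}{2}}}\right)^{\left<\nu^p\alpha,\beta\right>}E^{-}(\beta,x_2)E^{+}(\alpha,x_1)
\]
quoted just before the Proposition, exactly as in \cite{L1} and \cite{CalLM4}. First I would write out the product $Y^{\hnu}(\iota(e_{\a}),x_1)Y^{\hnu}(\iota(e_{\b}),x_2)$ using the explicit form of the twisted vertex operators, collect the $E^{\pm}$ factors, and move $E^{+}(-\a,x_1)$ past $E^{-}(-\b,x_2)$ via the formula above. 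Since $\langle\nu^0(-\a),-\b\rangle=\langle\a,\b\rangle$ and $\langle\nu^1(-\a),-\b\rangle=\langle\nu\a,\b\rangle$, the scalar prefactor that appears is
\[
\left(1-\frac{x_2^{\frac{1}{2}}}{x_1^{\frac{1}{2}}}\right)^{\left<\a,\b\right>}\left(1+\frac{x_2^{\frac{1}{2}}}{x_1^{\frac{1}{2}}}\right)^{\left<\nu\a,\b\right>},
\]
and everything else in the product (the $e_\a e_\b$ factors, the remaining $E^{-}$, $E^{+}$ pieces, and the $x^{\cdot}$ factors) is a well-defined formal series with no poles along $x_1^{1/2}=\pm x_2^{1/2}$. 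This is the standard ``normal ordering'' computation; I would cite the corresponding lines of \cite{CalLM4} rather than reproduce them.

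With the prefactor isolated, each of the four cases is immediate. When $\langle\nu^j\a,\b\rangle\ge 0$ for $j=0,1$, the exponents on both $\left(1-\tfrac{x_2^{1/2}}{x_1^{1/2}}\right)$ and $\left(1+\tfrac{x_2^{1/2}}{x_1^{1/2}}\right)$ are nonnegative integers, so the prefactor is a polynomial vanishing at $x_1^{1/2}=x_2^{1/2}$ to order $\langle\a,\b\rangle$ and at $x_1^{1/2}=-x_2^{1/2}$ to order $\langle\nu\a,\b\rangle$; since (by Lemma~\ref{comlemma}'s underlying root-system input) $\langle\a,\b\rangle$ and $\langle\nu\a,\b\rangle$ cannot both be zero unless the product already vanishes, and in fact when both are $\ge 0$ one checks that the product of the two vertex operators is zero outright (the total prefactor has a zero of positive order that, combined with the fact that $\iota(e_\a)$ and $\iota(e_\b)$ generate at worst a simple current structure, forces the whole coefficient to vanish) — giving the first identity. (I note the statement writes ``$0\le j\le 2$''; since $\nu^2=1$ this is the same as $0\le j\le 1$, and I would phrase it that way.) When $\langle\a,\b\rangle=-1$ and $\langle\nu\a,\b\rangle\ge 0$ the prefactor is $\left(1-\tfrac{x_2^{1/2}}{x_1^{1/2}}\right)^{-1}$ times a polynomial; multiplying by $x_1^{1/2}-x_2^{1/2}$ cancels that simple pole, and taking $\lim_{x_2^{1/2}\to x_1^{1/2}}$ kills the remaining factor $\left(1+\tfrac{x_2^{1/2}}{x_1^{1/2}}\right)^{\langle\nu\a,\b\rangle}$ only if that exponent is positive — but if $\langle\nu\a,\b\rangle=0$ one instead observes that after cancelling the pole the operator $Y^{\hnu}(\iota(e_{\a+\b}),x_2)$-type expression is regular and the $x_1^{1/2}-x_2^{1/2}$ prefactor still forces the limit to vanish; this yields (\ref{pt2}). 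The third identity is the mirror image, cancelling the pole at $x_1^{1/2}=-x_2^{1/2}$ with $x_1^{1/2}+x_2^{1/2}$. The fourth: when $\langle\nu^j\a,\b\rangle=-1$ for $j=0,1$ the prefactor is $\left(1-\tfrac{x_2^{1/2}}{x_1^{1/2}}\right)^{-1}\left(1+\tfrac{x_2^{1/2}}{x_1^{1/2}}\right)^{-1}$, i.e.\ a simple pole in $x_2/x_1$; multiplying by $x_1-x_2=(x_1^{1/2}-x_2^{1/2})(x_1^{1/2}+x_2^{1/2})$ clears both poles, and the limit then gives $0$.

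I expect the genuinely delicate point to be the first case: showing that nonnegativity of \emph{both} $\langle\a,\b\rangle$ and $\langle\nu\a,\b\rangle$ actually forces $Y^{\hnu}(\iota(e_\a),x)Y^{\hnu}(\iota(e_\b),x)=0$ as an operator identity (products of operators at a \emph{single} variable $x$), not merely that some prefactor vanishes. The clean way is to note that this is precisely the vertex-algebra statement that $\iota(e_\a)$ annihilates $\iota(e_\b)$ in the appropriate mode range — equivalently, $\langle\a,\b\rangle\ge 0$ means $x_\a(n)e_\b=0$ for $n\ge 0$, and the twisted analogue combined with $\langle\nu\a,\b\rangle\ge 0$ gives the vanishing of all singular terms in the twisted Jacobi identity, so the ``limit'' is literally the constant term and it is $0$. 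I would quote the relevant formulation from \cite{L1}/\cite{CalLM4} for this rather than rederive it, and present the remaining three cases as short residue/limit computations off the displayed prefactor.
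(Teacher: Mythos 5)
Your starting point is the right one, and it is all the paper itself offers: the Proposition is stated immediately after the $E^{+}E^{-}$ commutation relation with no further proof, so normal-ordering the product and isolating the scalar prefactor $\left(1-\frac{x_2^{1/2}}{x_1^{1/2}}\right)^{\langle\a,\b\rangle}\left(1+\frac{x_2^{1/2}}{x_1^{1/2}}\right)^{\langle\nu\a,\b\rangle}$ is exactly the intended computation. The problem is your case analysis, which at several points asserts vanishing that the prefactor does not deliver. In case (\ref{pt2}), multiplying by $x_1^{1/2}-x_2^{1/2}$ turns $\left(1-\frac{x_2^{1/2}}{x_1^{1/2}}\right)^{-1}$ into $x_1^{1/2}$, and the surviving factor $\left(1+\frac{x_2^{1/2}}{x_1^{1/2}}\right)^{\langle\nu\a,\b\rangle}$ tends to $2^{\langle\nu\a,\b\rangle}\neq 0$, not to $0$, as $x_2^{1/2}\to x_1^{1/2}$; your claim that a positive exponent ``kills'' this factor confuses the substitution $x_2^{1/2}\to x_1^{1/2}$ with $x_2^{1/2}\to -x_1^{1/2}$, and your fallback that the prefactor $x_1^{1/2}-x_2^{1/2}$ ``still forces the limit to vanish'' fails because that prefactor is entirely consumed in cancelling the pole. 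What the computation actually produces in the last three identities is a nonzero multiple of the normal-ordered product, i.e.\ of $Y^{\hnu}(\iota(e_{\a+\b}),x)$ times a power of $x$ --- consistent with Corollary \ref{comcor}, where $[\xa{}{r},\xb{s}]$ is a nonzero multiple of $x^{\hnu}_{\a+\b}(r+s)$ --- and not $0$.

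The first identity has the same difficulty: setting $x_1=x_2$ sends the prefactor to $0^{\langle\a,\b\rangle}\,2^{\langle\nu\a,\b\rangle}$, which vanishes only when $\langle\a,\b\rangle>0$ strictly. Your assertion that $\langle\a,\b\rangle$ and $\langle\nu\a,\b\rangle$ cannot both be zero is false (take two orthogonal $\nu$-fixed simple roots of $D_n$ with $n\geq 5$), and the appeal to ``simple current structure'' is not an argument: the condition $\langle\nu^j\a,\b\rangle\geq 0$ buys you commutativity of the two vertex operators, but commuting operators do not have vanishing product at coincident variables. So your argument genuinely establishes only the case $\langle\a,\b\rangle>0$, which happens to be the only instance the paper ever uses --- the ideal $J$ is built solely from $R^0(\a_i,\a_i|t)$ with $\langle\a_i,\a_i\rangle=2$, where the factor $\left(1-\frac{x_2^{1/2}}{x_1^{1/2}}\right)^{2}$ does force the vanishing. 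I would prove that case cleanly from the prefactor and explicitly flag that the remaining cases, as literally stated, do not follow from (and appear inconsistent with) the same computation, rather than paper over the discrepancy.
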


By taking appropriate coefficients of the formal variables in the expressions from Proposition \ref{relationgens} we have the following sums which, when applied to any vector $v\in V_L^T$, yield zero.

\begin{cor} We have
\begin{equation}\label{infinite1}
R(\alpha,\beta|t)=\sum_{\substack{m_1,m_2\in\frac{1}{2}\ZZ \\ m_1+m_2=-t}}\xa{}{m_1}\xbet{m_2}  \text{ when } \left<\nu^j\a,\b\right>\geq 0 \text{ for } j=0,1 ,
\end{equation}

\begin{equation}\label{infinite2}\begin{aligned}
R(\alpha,\beta|t)=\sum_{\substack{m_1,m_2\in\frac{1}{2}\ZZ \\ m_1+m_2+\frac{1}{2}=-t}}&\left(\xa{}{m_1+\frac{1}{2}}\xbet{m_2}- \xa{}{m_1}\xbet{m_2+\frac{1}{2}}\right)\\
& \text{ when } \left<\alpha,\beta\right>=-1 \text{ and } \left<\nu\alpha,\beta\right>\geq 0 ,
\end{aligned}\end{equation}

\begin{equation}\label{infinite3}\begin{aligned}
R(\alpha,\beta|t)=\sum_{\substack{m_1,m_2\in\frac{1}{2}\ZZ \\ m_1+m_2+\frac{1}{2}=-t}}&\left(\xa{}{m_1+\frac{1}{2}}\xbet{m_2}+ \xa{}{m_1}\xbet{m_2+\frac{1}{2}}\right)\\
& \text{ when } \left<\alpha,\beta\right>\geq 0 \text{ and } \left<\nu\alpha,\beta\right>=-1 ,
\end{aligned}\end{equation}

and

\begin{equation}\label{infinite4}\begin{aligned}
R(\alpha,\beta|t)=\sum_{\substack{m_1,m_2\in\frac{1}{2}\ZZ \\ m_1+m_2+1=-t}}&\left(\xa{}{m_1+1}\xbet{m_2}- \xa{}{m_1}\xbet{m_2+1}\right)\\
&\text{ when } \left<\nu^j\alpha,\beta\right>=-1 \text{ for } j=0,1,
\end{aligned}\end{equation}
when applied to any $v\in V_L^T$ yield zero.
\end{cor}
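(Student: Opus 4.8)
The plan is to extract, term by term, the vanishing of the formal-variable expressions in Proposition~\ref{relationgens} and re-express them in terms of the component operators $\xa{}{m}$. Recall that for $\a\in\Delta_+$ we have
\[
Y^{\hnu}(\iota(e_{\alpha}),x)=\sum_{m\in\frac{1}{2}\ZZ}\xa{}{m}x^{-m-\frac{\left<\alpha,\alpha\right>}{2}},
\]
and since $\a$ is a positive root of an $ADE$ root system, $\left<\a,\a\right>=2$, so the exponent is simply $-m-1$. The first step is therefore to substitute this expansion into each of the four relations, collect the coefficient of a fixed power of $x$ (respectively of $x_1,x_2$ after the indicated limit is taken), and read off the stated sum $R(\a,\b|t)$.

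For the first relation, $Y^{\hnu}(\iota(e_{\a}),x)Y^{\hnu}(\iota(e_{\b}),x)=0$: multiplying the two series in the single variable $x$ gives $\sum_{t}\left(\sum_{m_1+m_2=-t-2}\xa{}{m_1}\xbet{m_2}\right)x^{t}$, so each inner coefficient vanishes on $V_L^T$; reindexing $t\mapsto t-2$ (or absorbing the shift into the definition) yields exactly \eqref{infinite1}. For \eqref{infinite2}, I start from \eqref{pt2}, expand $x_1^{1/2}-x_2^{1/2}$ times the product $Y^{\hnu}(\iota(e_{\a}),x_1)Y^{\hnu}(\iota(e_{\b}),x_2)$, then take the limit $x_2^{1/2}\to x_1^{1/2}$. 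Writing $y=x_1^{1/2}$, the product becomes a series in $y$ whose coefficients are sums of the form $\xa{}{m_1}\xbet{m_2}$, and the factor $x_1^{1/2}-x_2^{1/2}$ produces precisely the difference $\xa{}{m_1+\frac12}\xbet{m_2}-\xa{}{m_1}\xbet{m_2+\frac12}$ after matching powers; setting the total exponent to $-t$ and collecting gives \eqref{infinite2}. The case \eqref{infinite3} is identical except that the factor is $x_1^{1/2}+x_2^{1/2}$, which flips the sign of the second term, producing the $+$ in \eqref{infinite3}. For \eqref{infinite4}, the factor is $x_1-x_2=(x_1^{1/2}-x_2^{1/2})(x_1^{1/2}+x_2^{1/2})$, equivalently one shifts by a full integer rather than a half-integer, giving the difference $\xa{}{m_1+1}\xbet{m_2}-\xa{}{m_1}\xbet{m_2+1}$ with $m_1+m_2+1=-t$.

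The only genuinely delicate point is bookkeeping of the half-integer indices and the constant shift coming from $\left<\a,\a\right>/2=1$: one must be careful that the sums $\sum_{m_1,m_2\in\frac12\ZZ}$ are over the correct residue classes so that the operators are nonzero (recall from \eqref{zero} that if $\nu\a=\a$ then $\xa{}{m}=0$ for $m\in\frac12+\ZZ$), and that taking $\mathrm{Res}$ or extracting a coefficient commutes with the limit $x_2^{1/2}\to x_1^{1/2}$. Since each of the four formal identities in Proposition~\ref{relationgens} holds as an identity of operators on $V_L^T$ (indeed, holds identically in the formal variables after the limit), extracting any single coefficient gives an operator that annihilates every $v\in V_L^T$, which is exactly the assertion of the corollary. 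Thus the proof is a routine, if somewhat tedious, coefficient-extraction argument, with no conceptual obstacle beyond keeping the indexing consistent.
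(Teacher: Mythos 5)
Your proof is correct and follows exactly the route the paper intends: the paper offers no written argument for this corollary beyond the remark that it follows ``by taking appropriate coefficients of the formal variables in the expressions from Proposition \ref{relationgens},'' and your coefficient-extraction (using $\langle\a,\a\rangle=2$ so that each mode appears with exponent $-m-1$, with the factors $x_1^{1/2}\pm x_2^{1/2}$ and $x_1-x_2$ producing the half-integer and integer shifts) is precisely that computation carried out in detail. The points you flag as delicate --- the residue classes of the indices and the compatibility of coefficient extraction with the formal limit $x_2^{1/2}\to x_1^{1/2}$ --- are the right ones, and are handled correctly.
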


We define the following truncated finite sums

\begin{equation}\label{finite1}
R^0(\alpha,\beta|t)=\sum_{\substack{m_1,m_2\in\frac{1}{2}\ZZ_{<0} \\ m_1+m_2=-t}}\xa{}{m_1}\xbet{m_2}  \text{ when } \left<\nu^j\a,\b\right>\geq 0 \text{ for } j=0,1,
\end{equation}

\begin{equation}\label{finite2}\begin{aligned}
R^0(\alpha,\beta|t)=\sum_{\substack{m_1,m_2\in\frac{1}{2}\ZZ_{<0} \\ m_1+m_2+\frac{1}{2}=-t}}&\left(\xa{}{m_1+\frac{1}{2}}\xbet{m_2}- \xa{}{m_1}\xbet{m_2+\frac{1}{2}}\right)\\
& \text{ when } \left<\alpha,\beta\right>=-1 \text{ and } \left<\nu\alpha,\beta\right>\geq 0,
\end{aligned}\end{equation}

\begin{equation}\label{finite3}\begin{aligned}
R^0(\alpha,\beta|t)=\sum_{\substack{m_1,m_2\in\frac{1}{2}\ZZ_{<0} \\ m_1+m_2+\frac{1}{2}=-t}}&\left(\xa{}{m_1+\frac{1}{2}}\xbet{m_2}+ \xa{}{m_1}\xbet{m_2+\frac{1}{2}}\right)\\
& \text{ when } \left<\nu \alpha,\beta\right> \geq 0 \text{ and } \left<\nu\alpha,\beta\right>=-1,
\end{aligned}\end{equation}

and

\begin{equation}\label{finite4}\begin{aligned}
R^0(\alpha,\beta|t)=\sum_{\substack{m_1,m_2\in\frac{1}{2}\ZZ_{<0} \\ m_1+m_2+1=-t}}&\left(\xa{}{m_1+1}\xbet{m_2}- \xa{}{m_1}\xbet{m_2+1}\right)\\
&\text{ when } \left<\nu^j\alpha,\beta\right>=-1 \text{ for } j=0,1,
\end{aligned}\end{equation}

For the following proposition, we require a certain completion of $\un\overline{\mathfrak{n}}[\hnu]_{+}$, which we call $\widetilde{\un\overline{\mathfrak{n}}[\hnu]_{+}}$. We refer the reader to \cite{LW} and \cite{S2} for details of the construction of this completion.

\begin{prop} For $\a,\b\in\Delta^{+}$ and $R(\a,\b|t)$ described above we may write
\be
R(\a,\b|t)=R^0(\a,\b|t)+a
\ee
where $R^0(\a,\b|t)$ is as above and $a\in\widetilde{\un\overline{\mathfrak{n}}[\hnu]_{+}}$.
\end{prop}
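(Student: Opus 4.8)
The plan is to show that the difference $R(\a,\b|t) - R^0(\a,\b|t)$ consists precisely of the ``non-negative mode'' tail of the infinite sum defining $R(\a,\b|t)$, and that every such tail term, after using the commutator relations of Corollary~\ref{comcor}, lies in the completion $\widetilde{\un\overline{\mathfrak{n}}[\hnu]_{+}}$. First I would fix attention on one of the four cases, say the generic one \eqref{infinite4} with $\langle\nu^j\a,\b\rangle=-1$; the other three are handled identically with the obvious bookkeeping changes in which half-integer shifts appear. Writing out $R(\a,\b|t)$ and subtracting the finite piece $R^0(\a,\b|t)$, which by \eqref{finite4} retains only those summands with \emph{both} indices in $\frac{1}{2}\ZZ_{<0}$, the remainder is the sum of all terms $\xa{}{m_1+1}\xbet{m_2} - \xa{}{m_1}\xbet{m_2+1}$ in which at least one of the two mode indices is $\geq 0$. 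By weight considerations (the operators lower weight by a bounded amount, cf.\ the computation of $\wt(\xa{}{m})$ in Section~2) this is a \emph{locally finite} collection of terms in each graded piece, so it is a well-defined element of the completion.

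The key step is then to argue that each such remainder term individually lies in $\widetilde{\un\overline{\mathfrak{n}}[\hnu]_{+}}$. A term of the form $\xa{}{r}\xbet{s}$ with $r \geq 0$ already sits in $\un \overline{\n}[\hnu]_+ \cdot \un$, hence (after reordering, which only introduces lower-order terms via Corollary~\ref{comcor} whose resulting single modes $x^{\hnu}_{\a+\b}(r+s)$ with $r+s$ possibly negative but controlled) it lies in the completion we want; the point is that the $U(\overline{\n}[\hnu]_+)$ part is never disturbed in a way that would push a mode back to negative values without being compensated. For a term $\xa{}{r}\xbet{s}$ with $r<0$ but $s \geq 0$, I would use the commutator formulas \eqref{comcor2}, \eqref{comcor2'}, \eqref{comcor3} to move $\xbet{s}$ to the left past $\xa{}{r}$: the commutator contributes either $0$ or a multiple of $x^{\hnu}_{\a+\b}(r+s)$, and $r+s$ can be of either sign --- but the single-mode operators with non-negative index are in $\overline{\n}[\hnu]_+$ directly, while those with negative index are cancelled against the matching contributions coming from the \emph{paired} summand $-\xa{}{m_1}\xbet{m_2+1}$ (this is exactly why the relations $R$ are defined with that antisymmetrized shift structure). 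So the commutator corrections telescope, leaving only operators that are genuinely in $\overline{\n}[\hnu]_+$ times elements of $\un$, i.e.\ in $\widetilde{\un\overline{\mathfrak{n}}[\hnu]_{+}}$.

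The main obstacle I anticipate is precisely this telescoping/cancellation bookkeeping: one must check that when the two families of terms $\xa{}{m_1+1}\xbet{m_2}$ and $\xa{}{m_1}\xbet{m_2+1}$ are each reordered via Corollary~\ref{comcor}, the ``bad'' single-mode contributions $x^{\hnu}_{\a+\b}(\ell)$ with $\ell<0$ appear with equal coefficients in both families and hence cancel in the difference, so that the surviving single-mode terms all have non-negative mode index. This requires keeping careful track of the constant $\frac{1}{2}\epsilon_{C_0}(\a,\b)$ and the sign factor $(-1)^{2r}$ in \eqref{comcor3}, together with the simplification of \eqref{comcor3} recorded after Lemma~\ref{techlemma1} (namely that it collapses to $\pm\epsilon_{C_0}(\a,\b)x^{\hnu}_{\a+\b}(r+s)$ depending on which of $\a,\b$ is $\nu$-fixed). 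Once that cancellation is verified, collecting the finitely many surviving terms in each weight space and invoking the construction of the completion from \cite{LW} and \cite{S2} finishes the argument. I would present this by doing the case \eqref{infinite4} in detail and remarking that \eqref{infinite1}--\eqref{infinite3} follow by the same method, since they are ``degenerate'' instances with fewer commutator terms.
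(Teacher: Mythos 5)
Your proposal follows essentially the same route as the paper's proof: isolate the tail terms carrying a non-negative mode, observe that those with the non-negative mode on the right already lie in the left ideal $\un\overline{\mathfrak{n}}[\hnu]_{+}$, and for the terms with the non-negative mode on the left commute it to the right via Corollary \ref{comcor}, with the single-mode corrections $x^{\hnu}_{\a+\b}(\ell)$ cancelling between the two halves of the antisymmetrized pair exactly as you predict (the paper carries this out for (\ref{infinite2}) and notes the other cases are similar, with (\ref{infinite1}) immediate since the operators commute). One small caveat: for $r<0$, $s\ge 0$ the term $\xa{}{r}\xbet{s}$ is already in the left ideal and needs no reordering, so commuting $\xbet{s}$ leftwards as you suggest is unnecessary and would in fact move the term out of its manifestly good form.
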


\begin{proof}

It is clear we can rewrite the infinite sums (\ref{infinite1}) in this form because in this case the operators $\xa{}{m}$ and $\xbet{n}$ commute, so we any term that contains a positive mode can easily be written as an element of $\widetilde{\un\overline{\mathfrak{n}}[\hnu]_{+}}$.

We now focus on expressions of the form (\ref{infinite2}); the arguments involving (\ref{infinite3}) and (\ref{infinite4}) will be similar. We first decompose
\begin{align*}
R(\alpha,\b|t)=&\sum_{m_1\geq 0, m_2 < 0}\left(\xa{}{m_1+\frac{1}{2}}\xbet{m_2}- \xa{}{m_1}\xbet{m_2+\frac{1}{2}}\right)\\
+&R^0(\a,\b|t)+\sum_{m_1 < 0, m_2\geq 0}\left(\xa{}{m_1+\frac{1}{2}}\xbet{m_2}- \xa{}{m_1}\xbet{m_2+\frac{1}{2}}\right).
\end{align*}
The third summand is clearly a member of $\widetilde{\un\overline{\mathfrak{n}}[\hnu]_{+}}$ so we will focus on the first. Using Corollary \ref{comcor} we write

\begin{align*}
&\sum_{m_1\geq 0, m_2 < 0}\left(\xa{}{m_1+\frac{1}{2}}\xbet{m_2}- \xa{}{m_1}\xbet{m_2+\frac{1}{2}}\right)\\
&=\sum_{m_1\geq0, m_2 < 0}\left(\xbet{m_2}\xa{}{m_1+\frac{1}{2}}-\frac{1}{2}\epsilon_{C_0}(\a,\b)x^{\hnu}_{\a+\b}\left(m_1+m_2+\frac{1}{2}\right)\right.\\
&\hspace{.35in}\left.-\xbet{m_2+\frac{1}{2}}\xa{}{m_1}+\frac{1}{2}\epsilon_{C_0}(\a,\b)x^{\hnu}_{\a+\b}\left(m_1+m_2+\frac{1}{2}\right)\right)\\
&=\sum_{m_1\geq0, m_2 < 0}\left(\xbet{m_2}\xa{}{m_1+\frac{1}{2}}-\xbet{m_2+\frac{1}{2}}\xa{}{m_1}\right)\in \widetilde{\un\overline{\mathfrak{n}}[\hnu]_{+}}
\end{align*}

\end{proof}

Define the following left ideals of $\un$:

\be
J=\sum_{\substack{t\in\ZZ_{>0}\\, \nu \alpha_i = \alpha_i}}\un R^0(\a_i,\a_i|t)+
\sum_{\substack{t\in\frac{1}{2}\ZZ_{>0}\\  \nu \alpha_i \neq \alpha_i}}\un R^0(\a_i,\a_i|t)
\ee
as well as
\be
I_{\Lambda}=J+\un\overline{\mathfrak{n}}[\hnu]_{+}.
\ee
For $A_{2n-1}$, we let $i=1,\dots,n$. For $D_n$, we let $i=1,\dots,n-1$, and for $E_6$ we let $i=1,2,3,6$.

\begin{rem}
The ideal $I_\Lambda$ is defined analogously to the ideals found in \cite{CalLM1}-\cite{CalLM4}, \cite{CalMPe}, and \cite{PS}. In the following sections, we investigate the properties of $I_\Lambda$, and, as in \cite{CalLM1}-\cite{CalLM4}, \cite{CalMPe}, and \cite{PS}, show that it gives the desired presentation of the principal subspace. 
\end{rem}

\section{Important Mappings}

Following \cite{CalLM4}, \cite{CalMPe}, and \cite{PS}, we make use of certain shift automorphisms of $\un$. For $\gamma\in\h_{(0)}$ and a character of the root lattice $\theta:L\to\CC$ consider the linear map
\begin{equation}\begin{aligned}
\ta{}:\overline{\mathfrak{n}}[\hnu]&\to\overline{\mathfrak{n}}[\hnu]\\
\xa{}{m}&\mapsto\theta_{}(\alpha)\xa{}{m+\left<\a_{(0)},\gamma\right>}.
\end{aligned}\end{equation}
For suitably chosen $\gamma$ and $\theta$, this is a Lie algebra automorphism that extends to an automorphism of $\un$ which we also denote by $\ta{}$ defined by
\begin{equation}\begin{aligned}
\ta{}&\left(\xbbet{i_r}{m_r}\cdots\xbbet{i_1}{m_1}\right)\\
&=\theta(\b_{i_1}+\cdots+\b_{i_r})\xbbet{i_r}{m_r+\left<(\beta_{i_r})_{(0)},\gamma\right>}\cdots\xbbet{i_1}{m_1+\left<(\beta_{i_1})_{(0)},\gamma\right>},
\end{aligned}\end{equation}
where $\b_{i_j}\in\Delta^{+}$.  As in \cite{CalLM4}, \cite{CalMPe}, and \cite{PS}, we use automorphisms associated to the elements of $\h_{(0)}$ associated to the duals of the simple roots
 \begin{equation}
 \gamma_i=(\lambda_{i})_{(0)} = \frac{1}{2}\left( \lambda_i + \nu \lambda_i \right)
 \end{equation}
where, for $A_{2n-1}$, we let $i=1,\dots,n$, for $D_n$, we let $i=1,\dots,n-1$, and for $E_6$ we let $i=1,2,3,6$.
 as well as the characters $\theta_i$ defined by
 
 \be \theta_i(\a_j)=(-1)^{\left<\lambda^{(i)},\a_j\right>}.\ee
 In particular, if $L$ is of type $A_{2n-1}$, we have:

 \begin{equation}
\theta_i(\alpha_i)=-1
\end{equation}
 \begin{equation}
\theta_i(\alpha_j)=1 \text{ if } i\neq j \text{ and } j \in \{1,\dots ,2n-1\}
\end{equation}
for $i \in \{1,\dots , n-1 \}$ 
and we have
 \begin{equation}
\theta_n(\alpha_j)=1 \text{ if } j \in \{1,\dots ,2n-1\} \\
\end{equation}
If $L$ is of type $D_{n}$, we have:
 \begin{equation}
\theta_{n-1}(\alpha_{n-1})=-1
\end{equation}
 \begin{equation}
\theta_{n-1}(\alpha_j)=1 \text{ if } j\in \{1,2,\dots,n-2,n \}
\end{equation}
and define
 \begin{equation}
\theta_i(\alpha_j)=1 \text{ if } j \in \{1,\dots ,n\}
\end{equation}
for $i \in \{1,\dots , n-1 \}$.\\
If $L$ is of type $E_{6}$, we have: \begin{equation}
\theta_i(\alpha_i)=-1
\end{equation}
 \begin{equation}
\theta_i(\alpha_j)=1 \text{ if } i\neq j \text{ and } j \in \{1,\dots, 6\}
\end{equation}
for $i \in \{1,2\}$ 
and we have
 \begin{equation}
\theta_i(\alpha_j)=1 \text{ if } j \in \{1,\dots ,2n-1\} \text{ and } i \in \{3,6\}. \\
\end{equation}
The maps $\ta{i}$ have inverses given by $\tau_{-\gamma_i,\theta_i^{-1}}$.
The following Lemma follows immediately from the action of the maps $\ta{i}$:
\begin{lem}\label{lempt0}
We have that 
\begin{equation}
\ta{i}\left(\un \on_{+} + \un\xa{i}{-1}\right) \subset \un \on_{+} 
\end{equation}
if $\nu \alpha_i = \alpha_i$ and
\begin{equation}
\ta{i}\left(\un \on_{+} + \un\xa{i}{-\frac{1}{2}}\right) \subset \un \on_{+} 
\end{equation}
if $\nu \alpha_i \neq \alpha_i$.
\end{lem}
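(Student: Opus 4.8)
The plan is to verify the inclusion directly from the definition of the shift automorphism $\ta{i}$ applied to the generators of each summand. Recall that on a monomial $\xbbet{i_r}{m_r}\cdots\xbbet{i_1}{m_1}$ the map $\ta{i}$ introduces a scalar $\theta_i(\b_{i_1}+\cdots+\b_{i_r})$ and shifts each mode $m_j$ by $\left<(\b_{i_j})_{(0)},\gamma_i\right>$. Since $\gamma_i=(\lambda_i)_{(0)}$, and $\left<(\b)_{(0)},(\lambda_i)_{(0)}\right> = \left<\b_{(0)},\lambda_i\right>$ (because $(\lambda_i)_{(0)}$ lies in $\h_{(0)}$ and one may drop the projection on the other factor, using that $\nu$ is an isometry), these mode shifts are nonnegative integers counting, with multiplicity, the occurrences of roots in the $\nu$-orbit $\mathfrak{o}_i$ among $\b_{i_1},\dots,\b_{i_r}$; in particular $\ta{i}$ carries $\on_{+}$ into $\on_{+}$, hence $\un\on_{+}$ into itself. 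So the only thing to check is what happens to the second summand.

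For the second summand, I would compute $\ta{i}$ on the single generator $\xa{i}{-1}$ (resp.\ $\xa{i}{-\frac12}$). In the fixed case $\nu\a_i=\a_i$ we have $(\a_i)_{(0)}=\a_i$, so $\left<(\a_i)_{(0)},\gamma_i\right>=\left<\a_i,\lambda_i\right>=1$, and thus
\begin{equation}
\ta{i}\left(\xa{i}{-1}\right)=\theta_i(\a_i)\,\xa{i}{0}=-\,\xa{i}{0}\in\on_{+},
\end{equation}
since mode $0$ lies in $\on_{+}=\coprod_{m\in\frac12\ZZ_{\ge 0}}\n_{(2m)}\otimes t^m$. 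In the non-fixed case $\nu\a_i\neq\a_i$ one has $\left<(\a_i)_{(0)},\gamma_i\right>=\tfrac12\left<\a_i+\nu\a_i,\lambda_i\right>=\tfrac12$ (using $\left<\nu\a_i,\lambda_i\right>=\left<\a_i,\nu^{-1}\lambda_i\right>$ and that $\lambda_i$ is a $\nu$-fixed element of $\mathcal{B}^\star$ in the non-fixed orbits — here the orbit $\mathfrak{o}_i$ of a non-fixed simple root's dual weight; I would spell out the relevant inner product from the explicit data in Section 2), so
\begin{equation}
\ta{i}\left(\xa{i}{-\frac12}\right)=\theta_i(\a_i)\,\xa{i}{0}\in\on_{+}.
\end{equation}
Since $\ta{i}$ is an algebra automorphism, $\ta{i}(\un\,\xa{i}{-1})=\ta{i}(\un)\,\ta{i}(\xa{i}{-1})=\un\cdot(\text{element of }\on_{+})\subset\un\on_{+}$, and similarly in the non-fixed case; combining with the first paragraph gives the claimed containment.

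The only genuinely delicate point is the computation of the mode-shift $\left<(\a_i)_{(0)},\gamma_i\right>$ — i.e.\ confirming it equals exactly $1$ in the fixed case and exactly $\tfrac12$ in the non-fixed case — since this is what lands the shifted generator precisely at nonnegative mode rather than overshooting or undershooting. This requires unwinding the definitions $\gamma_i=(\lambda_i)_{(0)}=\tfrac12(\lambda_i+\nu\lambda_i)$ together with the duality $\left<\lambda_i,\a_j\right>=\delta_{i,j}$ and the behaviour of $\nu$ on $\mathcal B^\star$, so I would verify it case-by-case ($A_{2n-1}$, $D_n$, $E_6$) using the explicit simple roots and the fact that $\nu$ permutes the fundamental coweights in the same way it permutes the simple roots. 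Everything else is formal: it is just the observation that applying an automorphism to $\un\on_{+}+\un\,x$ gives $\un\on_{+}+\un\,\ta{i}(x)$ and that $\ta{i}(x)$ has been arranged to lie in $\on_{+}$.
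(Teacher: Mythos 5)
Your proof is correct and takes essentially the same route as the paper's, which likewise reduces the claim to the two facts that $\left<\gamma_i,\beta_{(0)}\right>\geq 0$ for every $\beta\in\Delta_{+}$ (so $\ta{i}$ preserves $\un\on_{+}$) and that $\ta{i}$ sends the distinguished generator $\xa{i}{-1}$ (resp.\ $\xa{i}{-\frac{1}{2}}$) to $\theta_i(\a_i)\xa{i}{0}\in\on_{+}$. One small correction to your parenthetical in the non-fixed case: the reason $\left<\nu\a_i,\lambda_i\right>=0$ is simply that $\nu\a_i$ is a simple root distinct from $\a_i$ and $\left<\lambda_i,\a_j\right>=\delta_{i,j}$ — not that $\lambda_i$ is $\nu$-fixed (it is not when $\nu\a_i\neq\a_i$) — so the shift $\tfrac{1}{2}\left<\a_i+\nu\a_i,\lambda_i\right>=\tfrac{1}{2}$ follows at once and no case-by-case verification is needed.
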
 
\begin{proof}
This immediately follows from the fact that $\langle \gamma_i, \beta_{(0)} \rangle \geq 0$ for each $\beta \in \Delta_+$, and the fact that 
\begin{equation}
\ta{i}\left({\xa{i}{-1}}\right) = \theta_i(\alpha_i)\xa{i}{0} \in \un \on_+
\end{equation}
if $\nu \alpha_i = \alpha_i$ and
\begin{equation}
\ta{i}\left({\xa{i}{-\frac{1}{2}}}\right) = \theta_i(\alpha_i)\xa{i}{0} \in \un \on_+
\end{equation}
if $\nu \alpha_i \neq \alpha_i$.
\end{proof}

We now present three Lemmas that are essential in determining Ker$f_\Lambda$. The lemmas and their proofs largely mirror similar lemmas in \cite{CalMPe} and \cite{PS}.
 \begin{lem}\label{Containment}We have
 \be\label{lempt1}
 \ta{i}\left(I_{\Lambda}+\un\xa{i}{-1}\right)= I_{\Lambda}
 \ee
 if $ \nu \a_i = \a_i$ and
 \be
 \ta{i}\left(I_{\Lambda}+\un\xa{i}{-\frac{1}{2}}\right)= I_{\Lambda}
 \ee
 if $ \nu \a_i \neq \a_i$.
 
 \begin{proof}
We first show that
 \be\label{cont1}
 \ta{i}\left(I_{\Lambda}+\un\xa{i}{-1}\right)\subset I_{\Lambda},
 \ee
 if $ \nu \a_i = \a_i$ and
 \be\label{cont2}
 \ta{i}\left(I_{\Lambda}+\un\xa{i}{-\frac{1}{2}}\right)\subset I_{\Lambda}.
 \ee
 if $ \nu \a_i \neq \a_i$
 In view of Lemma \ref{lempt0}, it is sufficient to show that 
 $\ta{i} (R^0(\a_j,\a_j|t)) \in I_\Lambda$.
 If $i \neq j$, we have that 
 \begin{equation}
 \ta{i} (R^0(\a_j,\a_j|t)) = R^0(\a_j,\a_j|t) \in I_\Lambda.
 \end{equation}
 If $\nu \alpha_i = \alpha_i$, we have that
 \begin{equation*}\begin{aligned}
\ta{i}(R^0(\a_i,\a_i|t))&=\ta{i}\left(\sum_{\substack{m_1,m_2\in\ZZ_{<0} \\ m_1+m_2=-t}}\xa{i}{m_1}\xa{i}{m_2}\right)\\
&=\theta_i(2\a_i)\left(\sum_{\substack{m_1,m_2\in\ZZ_{<0} \\ m_1+m_2=-t}}\xa{i}{m_1+1}\xa{i}{m_2+1}\right)\\
&=\theta_i(2\a_i)\left(R^0(\a_i,\a_i|t-2)+2\xa{1}{-t+2}\xa{i}{0}\right)\\
&\in J+\un\on_{+}=I_{\Lambda}.\end{aligned}\end{equation*}
If $\nu \alpha_i \neq \alpha_i$, we have
\be\begin{aligned}
 \ta{i}\left(R^0(\a_i,\a_i|t)\right)&=\ta{i}\left(\sum_{\substack{m_1,m_2\in\frac{1}{2}\ZZ{<0} \\ m_1+m_2=-t}}\xa{i}{m_1}\xa{i}{m_2}\right)\\
 &=\theta_i(2\a_i)\left(\sum_{\substack{m_1,m_2\in\frac{1}{2}\ZZ{<0} \\ m_1+m_2=-t}}\xa{i}{m_1+\frac{1}{2}}\xa{2}{m_2+\frac{1}{2}}\right)\\
 &=\theta_{i}(2\a_i)\left(R^0\left(\a_i,\a_i|t-1\right)+2\xa{i}{-t+1}\xa{i}{0}\right).\\
 &\in I_{\Lambda}
 \end{aligned}
 \ee

Since $J$ is the left ideal generated by the $R^0(\a_i,\a_i|t)$, we have that $\ta{i}(J)\subset I_{\Lambda},$ establishing (\ref{cont1}) and (\ref{cont2}). 

For the other containment, we will prove equivalent statements
\be
\tau_{-\gamma_i,\theta_i^{-1}}(I_{\Lambda})\subset I_{\Lambda}+\un\xa{i}{-1}.
\ee
if $\nu \alpha_i = \alpha_i$
and
\be
\tau_{-\gamma_i,\theta_i^{-1}}(I_{\Lambda})\subset I_{\Lambda}+\un\xa{i}{-\frac{1}{2}}.
\ee
if $\nu \alpha_i \neq \alpha_i$

Using
\be \label{invmap1}
\tau_{-\gamma_i,\theta_i^{-1}}(\xa{j}{m})=\theta_1^{-1}(\a_i)\xa{j}{m-\delta_{i,j}}
\ee
if $\nu \a_i = \a_i$ and
\be \label{invmap2}
\tau_{-\gamma_i,\theta_i^{-1}}(\xa{j}{m})=\theta_1^{-1}(\a_i)\xa{j}{m-\frac{1}{2}\delta_{i,j}}
\ee
if $\nu \a_i \neq \a_i$,
it is clear that 
\be
\tau_{-\gamma_i,\theta_i^{-1}}(\un\on_{+})\subset \un\on_{+}+\un\xa{i}{-1}
\ee
if $\nu \a_i = \a_i$
and
\be
\tau_{-\gamma_i,\theta_i^{-1}}(\un\on_{+})\subset \un\on_{+}+\un\xa{i}{-\frac{1}{2}}
\ee
if $\nu \a_i \neq \a_i$. Furthermore, if $i \neq j$ we have that
\be
\tau_{-\gamma_i,\theta_i^{-1}}(R(\a_j,\a_j|t)) = R(\a_j,\a_j|t) \in I_\Lambda.
\ee
Now we investigate the action of $\tau_{-\gamma_i,\theta_i^{-1}}$ on $R^0(\a_i,\a_i|t)$.
If $\nu \a_i = \a_i$, we have
\be\begin{aligned}
\tau_{-\gamma_i,\theta_i^{-1}}(R^0(\a_i,\a_i|t))&=\tau_{-\gamma_i,\theta_i^{-1}}\left(\sum_{\substack{m_1,m_2\in\ZZ_{<0} \\ m_1+m_2=-t}}\xa{i}{m_1}\xa{i}{m_2}\right)\\
&=\theta^{-1}(2\a_i)\left(\sum_{\substack{m_1,m_2\in\ZZ_{<0} \\ m_1+m_2=-t}}\xa{i}{m_1-1}\xa{i}{m_2-1}\right)\\
&=\theta^{-1}(2\a_i)\left(R^0(\a_i,\a_i|t+2)-2\xa{i}{-t-1}\xa{i}{-1}\right)\\
&\in I_{\Lambda}+\un\xa{i}{-1}.\end{aligned}\ee
If  $\nu \a_i \neq \a_i$, we have
\be\begin{aligned}
\tau_{-\gamma_i,\theta_i^{-1}}(R^0(\a_i,\a_i|t))&=\tau_{-\gamma_i,\theta_i^{-1}}\left(\sum_{\substack{m_1,m_2\in\frac{1}{2}\ZZ_{<0} \\ m_1+m_2=-t}}\xa{i}{m_1}\xa{i}{m_2}\right)\\
&=\theta^{-1}(2\a_i)\left(\sum_{\substack{m_1,m_2\in \frac{1}{2}\ZZ_{<0} \\ m_1+m_2=-t}}\xa{i}{m_1-\frac{1}{2}}\xa{i}{m_2-\frac{1}{2}}\right)\\
&=\theta^{-1}(2\a_i)\left(R^0(\a_i,\a_i|t+1)-2\xa{i}{-t-\frac{1}{2}}\xa{i}{-\frac{1}{2}}\right)\\
&\in I_{\Lambda}+\un\xa{i}{-\frac{1}{2}}.\end{aligned}\ee
Since $J$ is the left ideal generated by the $R(\a_i,\a_i|t)$, we have that 
\be
\tau_{-\gamma_i,\theta_i^{-1}}(J)\subset I_{\Lambda}+\un\xa{i}{-1}.
\ee
if $\nu \alpha_i = \alpha_i$
and
\be
\tau_{-\gamma_i,\theta_i^{-1}}(J)\subset I_{\Lambda}+\un\xa{i}{-\frac{1}{2}}.
\ee
if $\nu \alpha_i \neq \alpha_i$,
thus completing the proof of the reverse containment.
 \end{proof}
  \end{lem}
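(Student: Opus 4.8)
Since $\ta{i}$ is an algebra automorphism of $\un$, the plan is to prove the asserted equality of left ideals by establishing the two inclusions separately, in each case reducing to a check on a generating set of the ideals involved.

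\emph{The containment ``$\subseteq$''.} First I would establish $\ta{i}\left(I_{\Lambda}+\un\xa{i}{-1}\right)\subseteq I_{\Lambda}$, and the analogous statement with $\xa{i}{-\frac12}$ when $\nu\a_i\neq\a_i$. Writing $I_{\Lambda}=J+\un\on_{+}$, Lemma \ref{lempt0} already yields $\ta{i}\left(\un\on_{+}+\un\xa{i}{-1}\right)\subseteq\un\on_{+}\subseteq I_{\Lambda}$, so only $\ta{i}(J)$ needs attention. As $J$ is the left ideal generated by the elements $R^{0}(\a_j,\a_j|t)$ and $\ta{i}$ is an automorphism, it is enough to check $\ta{i}\left(R^{0}(\a_j,\a_j|t)\right)\in I_{\Lambda}$ for every admissible $j$ and every $t$. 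For $j\neq i$ this is immediate: the shift $\langle(\a_j)_{(0)},\gamma_i\rangle$ vanishes and $\theta_i$ is trivial on $\a_j$, so $R^{0}(\a_j,\a_j|t)$ is fixed. For $j=i$ I would reindex the defining sum, using that $\ta{i}$ raises each mode of $\xa{i}{\cdot}$ by $c:=\langle(\a_i)_{(0)},\gamma_i\rangle$ (equal to $1$ if $\nu\a_i=\a_i$ and to $\frac12$ if $\nu\a_i\neq\a_i$) and multiplies by the scalar $\theta_i(2\a_i)=1$. The terms in which no mode reaches $0$ reassemble into $R^{0}(\a_i,\a_i|t-2c)$ (read as $0$ once $t-2c\le 0$), while the two boundary terms $\xa{i}{0}\xa{i}{-t+2c}$ and $\xa{i}{-t+2c}\xa{i}{0}$ coincide because $\left[\xa{i}{r},\xa{i}{s}\right]=0$ (Corollary \ref{comcor}, since $\langle\a_i,\a_i\rangle\ge 0$ and $\langle\nu\a_i,\a_i\rangle\ge 0$) and together contribute an element of $\un\on_{+}$. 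Hence $\ta{i}\left(R^{0}(\a_i,\a_i|t)\right)\in J+\un\on_{+}=I_{\Lambda}$, so $\ta{i}(J)\subseteq I_{\Lambda}$.

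\emph{The containment ``$\supseteq$''.} Because $\ta{i}^{-1}=\tau_{-\gamma_i,\theta_i^{-1}}$, this is equivalent to $\tau_{-\gamma_i,\theta_i^{-1}}(I_{\Lambda})\subseteq I_{\Lambda}+\un\xa{i}{-1}$ (respectively $+\un\xa{i}{-\frac12}$), and now the map \emph{lowers} modes. For $\un\on_{+}$ I would use that this left ideal is generated by the operators $\xa{j}{m}$ with $m\ge 0$ and $j$ admissible --- every $\xbet{m}$ with $\b\in\Delta_{+}$, $m\ge 0$, being an iterated bracket of such operators and hence in their left ideal --- and on these generators $\tau_{-\gamma_i,\theta_i^{-1}}$ keeps the mode fixed when $j\neq i$ and lowers it by $c$ when $j=i$, so the mode never drops below $-1$ (respectively $-\frac12$); this gives $\tau_{-\gamma_i,\theta_i^{-1}}(\un\on_{+})\subseteq\un\on_{+}+\un\xa{i}{-1}$. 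For $J$ I would apply the same reindexing to $\tau_{-\gamma_i,\theta_i^{-1}}\left(R^{0}(\a_i,\a_i|t)\right)$: every mode of $\xa{i}{\cdot}$ drops by $c$, the terms surviving as a genuine truncated sum form $R^{0}(\a_i,\a_i|t+2c)\in J$, and the summands of $R^{0}(\a_i,\a_i|t+2c)$ now missing (those in which one mode equals $-c$) recombine, via the same vanishing commutator, into $-2\,\xa{i}{-t-c}\xa{i}{-c}\in\un\xa{i}{-1}$ (respectively $\un\xa{i}{-\frac12}$); the $R^{0}(\a_j,\a_j|t)$ with $j\neq i$ are again fixed. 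Thus $\tau_{-\gamma_i,\theta_i^{-1}}(J)\subseteq I_{\Lambda}+\un\xa{i}{-1}$, which together with the previous step proves the equality.

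The substance is the combinatorial bookkeeping of this reindexing, carried out uniformly in the two parities: $\nu\a_i=\a_i$ (integer modes, shift $1$, with the genuinely half-integer modes of $\xa{i}{\cdot}$ suppressed by (\ref{zero})) and $\nu\a_i\neq\a_i$ (half-integer modes, shift $\frac12$). Along the way one must confirm that $\langle(\a_i)_{(0)},\gamma_i\rangle$ equals exactly $1$ (respectively $\frac12$) while $\langle(\a_j)_{(0)},\gamma_i\rangle=0$ for the other admissible $j$, and check, by inspecting the Dynkin diagrams of $A_{2n-1}$, $D_n$ and $E_6$, that $\a_i$ is never adjacent to $\nu\a_i$, so that $\left[\xa{i}{r},\xa{i}{s}\right]=0$ and the two boundary monomials genuinely collapse to twice a single one. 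This is the step I expect to require the most care, although it is entirely elementary once the two parities are treated in parallel.
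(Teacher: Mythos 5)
Your proposal is correct and follows essentially the same route as the paper: the forward inclusion reduces via Lemma \ref{lempt0} to checking the generators $R^{0}(\a_j,\a_j|t)$ of $J$, and the reverse inclusion is recast as $\tau_{-\gamma_i,\theta_i^{-1}}(I_{\Lambda})\subset I_{\Lambda}+\un\xa{i}{-1}$ (resp.\ $+\,\un\xa{i}{-\frac12}$), with the same reindexing of the truncated sums producing $R^{0}(\a_i,\a_i|t\mp 2c)$ plus the boundary terms $2\xa{i}{-t+2c}\xa{i}{0}$, resp.\ $-2\xa{i}{-t-c}\xa{i}{-c}$. Your uniform parameter $c=\langle(\a_i)_{(0)},\gamma_i\rangle$ and your explicit justification that $\tau_{-\gamma_i,\theta_i^{-1}}(\un\on_{+})\subset\un\on_{+}+\un\xa{i}{-1}$ by reducing to the simple-root generators of $\on_{+}$ (together with the observation that $\a_i$ is never adjacent to $\nu\a_i$, so the boundary monomials commute) are only presentational refinements of the paper's argument, and in fact make precise a step the paper dismisses as clear.
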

 
 As in \cite{CalLM4}, \cite{CalMPe}, and \cite{PS} define linear maps $\ps{i}$ for $1\leq i\leq l$ on $\un$ by:
\begin{equation}\begin{aligned}
\ps{i}:\un&\to\un\\
a&\mapsto \tau_{-\alpha_i,\theta_i^{-1}}(a)\xa{i}{-1},
\end{aligned}\end{equation}
if $\nu\a_i=\a_i$ and
\begin{equation}\begin{aligned}
\ps{i}:\un&\to\un\\
a&\mapsto \tau_{-\alpha_i,\theta_i^{-1}}(a)\xa{i}{-\frac{1}{2}},
\end{aligned}\end{equation}
if $\nu\a_i\neq \a_i$.

In the spirit of Lemma \ref{techlemma1}, we present another technical lemma that will be of use for the next result. 

\begin{lem}\label{techlemma2}Suppose $\a_i$ is a simple root and $\a\in\Delta_+$ such that $\left<\a_i,\a\right>=2$. Then $\a=\a_i$.
\end{lem}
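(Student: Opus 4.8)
The plan is to use the explicit root-system descriptions from Section 2, handling the three types $A_{2n-1}$, $D_n$, and $E_6$ separately, since the condition $\langle \a_i, \a \rangle = 2$ for $\a \in \Delta_+$ is a statement purely about the underlying finite root system (with the standard normalization $\langle \cdot,\cdot\rangle$ giving $\langle \a,\a\rangle = 2$ for all roots in the simply-laced cases). The key observation is that if $\a \neq \a_i$, then $\a - \a_i$ would have to be a root (or zero), and one computes $\langle \a - \a_i, \a - \a_i \rangle = \langle \a,\a\rangle - 2\langle \a_i,\a\rangle + \langle \a_i,\a_i\rangle = 2 - 4 + 2 = 0$, forcing $\a = \a_i$ immediately. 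So the heart of the matter is really just the Cauchy--Schwarz-type fact that in a simply-laced root system $\langle \a_i, \a \rangle \in \{-1,0,1\}$ whenever $\a, \a_i$ are distinct roots, hence $\langle\a_i,\a\rangle = 2$ only when $\a = \a_i$.

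First I would observe that since all roots in types $A_{2n-1}$, $D_n$, $E_6$ have squared length $2$, for any two roots $\a, \b$ we have $|\langle \a,\b\rangle| \leq 2$ with equality iff $\b = \pm\a$ (this is just Cauchy--Schwarz together with the fact that the root system is reduced). Then $\langle \a_i, \a \rangle = 2$ with $\a \in \Delta_+$ forces $\a = \a_i$, since $\a = -\a_i \notin \Delta_+$. Alternatively, to keep things concrete and in line with the style of Lemma \ref{techlemma1}, I would run through the coordinate descriptions: in type $A_{2n-1}$, writing $\a_i = \EE_i - \EE_{i+1}$ and $\a = \EE_k - \EE_l$ with $k < l$, we get $\langle \a_i, \a \rangle = \delta_{i,k} + \delta_{i+1,l} - \delta_{i+1,k} - \delta_{i,l}$, which is a sum of four terms each at most $1$ in absolute value, and equals $2$ only when $\delta_{i,k} = \delta_{i+1,l} = 1$ and $\delta_{i+1,k} = \delta_{i,l} = 0$, i.e. $k = i$, $l = i+1$, so $\a = \a_i$. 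The $D_n$ and $E_6$ cases are entirely analogous using the coordinate forms given in (\ref{syme6basis}) and the surrounding displays.

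The main obstacle, such as it is, is purely bookkeeping: there is no conceptual difficulty, but for $D_n$ one must be slightly careful that $\a_i$ could be the "fork" root $\a_n = \varepsilon_{n-1} + \varepsilon_n$, and for $E_6$ one must check the $27$ "spinor-type" roots $(\mathbf{v}_1,\mathbf{v}_2,\mathbf{v}_3)$ against each simple root; but in every case the inner product $\langle \a_i, \a\rangle$ is visibly a sum of Kronecker deltas (or a short explicit computation with the $\pm\frac{2}{3},\pm\frac{1}{3}$ entries) bounded by $2$, attaining $2$ only on the diagonal. I would therefore present the clean length-argument ($\langle \a - \a_i, \a - \a_i\rangle = 0$) as the primary proof, remarking that it covers all three types uniformly, and leave the case-by-case coordinate verification as an optional alternative. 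This mirrors the structure of the preceding technical lemma while being considerably shorter.
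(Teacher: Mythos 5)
Your proof is correct, and your primary argument takes a genuinely different route from the paper's. The paper proves this lemma by a case-by-case coordinate computation in each of the three types, mirroring the structure of Lemma \ref{techlemma1}: in type $A_{2n-1}$ it expands $\left<\a_i,\a\right>$ as a sum of Kronecker deltas exactly as in your alternative sketch, then separately treats the $D_n$ roots of the form $\varepsilon_k+\varepsilon_l$ and four subcases in $E_6$ using the symmetric nine-dimensional realization. Your main argument --- that $\left<\a-\a_i,\a-\a_i\right>=2-4+2=0$ forces $\a=\a_i$ by positive definiteness (equivalently, Cauchy--Schwarz plus the fact that every root in a simply-laced system has squared length $2$, so $\left<\cdot,\cdot\right>$ coincides with the Euclidean inner product on roots in the paper's normalization) --- is uniform across all three types and considerably shorter. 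One small correction: your parenthetical claim that $\a-\a_i$ ``would have to be a root (or zero)'' is neither true in general nor needed; the norm-zero computation together with positive definiteness already yields $\a=\a_i$, so that clause should be dropped. What the paper's longer route buys is consistency with the explicit coordinate realizations it has already set up and reuses in Lemmas \ref{techlemma1} and \ref{lempt01}; what yours buys is brevity and type-uniformity.
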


\begin{proof}

We begin with the $A_{2n-1}$ setting, where $\a_i=\EE_{i}-\EE_{i+1}$ and $\a=\EE_k-\EE_l$ for $1\leq i\leq 2n$ and $1\leq k<l\leq 2n+1$. We now observe that 
\be\label{typeA}
\left<\a_i,\a\right>=\left<\EE_i-\EE_{i+1},\EE_k-\EE_l\right>=\delta_{i,k}+\delta_{i+1,l}-\delta_{i+1,k}-\delta_{i,l}=2.\ee
This implies that $i=k$ and $i+1=l$ and thus $\a=\a_i$.

Moving on to the $D_n$ setting we will explicitly check the cases that are not covered by (\ref{typeA}). We begin with $\a_i=\EE_i-\EE_{i+1}$ for $1\leq i\leq n-1$ and $\a=\EE_k+\EE_l$ for $1\leq k<l\leq n$. We see that 
\be
\left<\a_i,\a\right>=\left<\EE_i-\EE_{i+1},\EE_k+\EE_l\right>=\delta_{i,k}+\delta_{i,l}-\delta_{i+1,k}-\delta_{i+1,l}\ee
which implies that $k=i=l$ and thus $\a=2\EE_k\notin\Delta_+$. The remaining $D_n$ cases are similar.

Finally we explore the $E_6$ setting, again using the symmetric basis as described in (\ref{syme6basis}). This setting separates into a four cases. 
\begin{itemize}
\item[Case 1:] $\a_i\neq \a_3$ and $\a=(\mathbf{v}_1,\mathbf{v}_2,\mathbf{v}_3)$ where one of the $\mathbf{v}_i$ is $\EE_{k}-\EE_{l}$ for $1\leq k<l\leq 3$. The calculations for this case are is similar to that of type $A_{2n-1}$ as described above andf will be omitted. 
\item[Case 2:] $\a_i\neq\a_3$ and $\a=(\mathbf{v}_1,\mathbf{v}_2,\mathbf{v}_3)$ is so that the $\mathbf{v}_i \in\{-\frac{2}{3}\EE_1+\frac{1}{3}\EE_2+\frac{1}{3}\EE_3, \frac{1}{3}\EE_1-\frac{2}{3}\EE_2+\frac{1}{3}\EE_3,\frac{1}{3}\EE_1+\frac{1}{3}\EE_2-\frac{2}{3}\EE_3\}$. It is clear that no combination of $\a_i$ and $\mathbf{v}_1,\mathbf{v}_2,\mathbf{v}_3$ will produce $\left<\a_i,\a\right>=2$ in this case.
\item[Case 3:] We have the simple root $\a_3$ and $\a=(\mathbf{v}_1,\mathbf{v}_2,\mathbf{v}_3)$ where one of the $\mathbf{v}_i$ is $\EE_{k}-\EE_{l}$ for $1\leq k<l\leq 3$. This case is essentially the same as case 2. 
\item[Case 4:] We have the simple root $\a_3$ and $\a=(\mathbf{v}_1,\mathbf{v}_2,\mathbf{v}_3)$ is so that the $\mathbf{v}_i \in\{-\frac{2}{3}\EE_1+\frac{1}{3}\EE_2+\frac{1}{3}\EE_3, \frac{1}{3}\EE_1-\frac{2}{3}\EE_2+\frac{1}{3}\EE_3,\frac{1}{3}\EE_1+\frac{1}{3}\EE_2-\frac{2}{3}\EE_3\}$. We first observe that for all vectors $\mathbf{u},\mathbf{v}\in\{-\frac{2}{3}\EE_1+\frac{1}{3}\EE_2+\frac{1}{3}\EE_3, \frac{1}{3}\EE_1-\frac{2}{3}\EE_2+\frac{1}{3}\EE_3,\frac{1}{3}\EE_1+\frac{1}{3}\EE_2-\frac{2}{3}\EE_3\}$ we have $\left<\mathbf{u},\mathbf{v}\right>=\frac{2}{3}$ if and only if $\mathbf{u}=\mathbf{v}$, otherwise $\left<\mathbf{u},\mathbf{v}\right>=-\frac{1}{3}$. The result follows immediately from this fact.
\end{itemize}

\end{proof}

The following lemmas will be useful in the next section when determining Ker$f_\Lambda$:

\begin{lem}\label{lempt01}
We have that 
\begin{equation}
\ps{i}\ta{i}\left(\un \on_{+} + \un\xa{i}{-1}\right) \subset I_{\Lambda} 
\end{equation}
if $\nu \alpha_i = \alpha_i$ and
\begin{equation}
\ps{i}\ta{i}\left(\un \on_{+} + \un\xa{i}{-\frac{1}{2}}\right) \subset I_{\Lambda}
\end{equation}
if $\nu \alpha_i \neq \alpha_i$.
\end{lem}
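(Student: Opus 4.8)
The plan is to combine Lemma~\ref{lempt0} with Lemma~\ref{Containment} in a straightforward way. First I would recall from Lemma~\ref{lempt0} that
\[
\ta{i}\left(\un \on_{+} + \un\xa{i}{-1}\right) \subset \un \on_{+}
\]
when $\nu\a_i=\a_i$ (and the analogous statement with $\xa{i}{-\frac12}$ when $\nu\a_i\neq\a_i$). Thus, applying $\ps{i}$ to both sides, it suffices to show $\ps{i}(\un\on_+)\subset I_\Lambda$. This reduces the lemma to a single containment that does not involve $\ta{i}$ at all.

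Next I would unwind the definition of $\ps{i}$. Since $\ps{i}(a)=\tau_{-\a_i,\theta_i^{-1}}(a)\,\xa{i}{-1}$ (resp.\ $\tau_{-\a_i,\theta_i^{-1}}(a)\,\xa{i}{-\frac12}$), we have
\[
\ps{i}(\un\on_+) = \tau_{-\a_i,\theta_i^{-1}}(\un\on_+)\,\xa{i}{-1}.
\]
Here I should be careful: the shift $\tau_{-\a_i,\theta_i^{-1}}$ is the automorphism associated to $-\gamma_i = -(\lambda_i)_{(0)}$, so it shifts mode indices by $-\langle (\beta)_{(0)}, \gamma_i\rangle \le 0$. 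Because $\tau_{-\a_i,\theta_i^{-1}}$ is an algebra automorphism of $\un$, it carries the left ideal $\un\on_+$ into $\un$ times the image of $\on_+$. The image of a spanning element $\xb{m}$ with $m \ge 0$ under $\tau_{-\a_i,\theta_i^{-1}}$ is (up to a scalar $\theta_i^{-1}(\beta)$) $\xb{m-\langle\beta_{(0)},\gamma_i\rangle}$; this mode can now be negative, so it need not lie in $\on_+$ anymore. The key point is that the \emph{smallest} mode one can reach this way is $-\langle(\a_i)_{(0)},\gamma_i\rangle = -\langle(\a_i)_{(0)},(\lambda_i)_{(0)}\rangle$, and by Lemma~\ref{techlemma2} (which bounds $\langle\a_i,\beta\rangle$ and hence controls $\langle(\a_i)_{(0)},\beta_{(0)}\rangle$) one checks this shift is at most $1$ in the $\nu\a_i=\a_i$ case and at most $\frac12$ in the $\nu\a_i\neq\a_i$ case. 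Hence
\[
\tau_{-\a_i,\theta_i^{-1}}(\un\on_+) \subset \un\on_+ + \un\xa{i}{-1}
\]
when $\nu\a_i=\a_i$ (resp.\ $\subset \un\on_+ + \un\xa{i}{-\frac12}$), exactly as was already established inside the proof of Lemma~\ref{Containment}.

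Finally I would assemble the pieces. Multiplying the last containment on the right by $\xa{i}{-1}$ (resp.\ $\xa{i}{-\frac12}$) gives
\[
\ps{i}(\un\on_+) \subset \left(\un\on_+ + \un\xa{i}{-1}\right)\xa{i}{-1} \subset \un\on_+ + \un\xa{i}{-1}\xa{i}{-1}.
\]
The first summand $\un\on_+ \subset I_\Lambda$ trivially. For the second, $\xa{i}{-1}\xa{i}{-1}$ is, up to lower terms, the leading term of $R^0(\a_i,\a_i|2)$; more precisely $R^0(\a_i,\a_i|2) = \xa{i}{-1}\xa{i}{-1} + (\text{terms with at least one mode}\le -2\text{ paired with a mode}\ge 0)$, but in fact from the definition $(\ref{finite1})$ with $t=2$ one sees $R^0(\a_i,\a_i|2)$ has a term $\xa{i}{-1}\xa{i}{-1}$ together with the single cross term, so $\xa{i}{-1}\xa{i}{-1} \in J + \un\on_+ = I_\Lambda$ after rewriting. (In the $\nu\a_i\neq\a_i$ case one instead uses $R^0(\a_i,\a_i|1) = \xa{i}{-\frac12}\xa{i}{-\frac12}$.) Therefore $\ps{i}(\un\on_+)\subset I_\Lambda$, and combining with the first paragraph,
\[
\ps{i}\ta{i}\left(\un\on_+ + \un\xa{i}{-1}\right)\subset \ps{i}(\un\on_+)\subset I_\Lambda,
\]
with the analogous chain in the twisted case. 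The main obstacle is the bookkeeping in the middle paragraph: one must verify that the mode shifts induced by $\tau_{-\a_i,\theta_i^{-1}}$ never push an index below $-1$ (resp.\ $-\frac12$), which is where Lemma~\ref{techlemma2} and the inequality $\langle\gamma_i,\beta_{(0)}\rangle\ge 0$ for $\beta\in\Delta_+$ are used; everything else is formal manipulation of left ideals.
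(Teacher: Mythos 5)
Your opening reduction is legitimate: by Lemma~\ref{lempt0} one has $\ps{i}\ta{i}\left(\un \on_{+} + \un\xa{i}{-1}\right) \subset \ps{i}\left(\un\on_+\right)$, so it would indeed suffice to prove $\ps{i}\left(\un\on_+\right)\subset I_\Lambda$. But your argument for that containment breaks down in two places. First, you misidentify the shift inside $\ps{i}$: the map $\tau_{-\a_i,\theta_i^{-1}}$ is the shift automorphism associated to $-(\a_i)_{(0)}$, \emph{not} to $-\gamma_i=-(\lambda_i)_{(0)}$ (the latter is $\tau_{-\gamma_i,\theta_i^{-1}}$, the inverse of $\ta{i}$, which is what Lemma~\ref{Containment} controls). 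The map in $\ps{i}$ moves the mode of $\xb{m}$ by $-\left<\b_{(0)},(\a_i)_{(0)}\right>$, which equals $-2$ already for $\b=\a_i$ in the $\nu\a_i=\a_i$ case; so $\tau_{-\a_i,\theta_i^{-1}}\left(\un\on_+\right)$ is \emph{not} contained in $\un\on_+ + \un\xa{i}{-1}$, and your middle display is the wrong statement about the wrong map. Second, even granting that display, the final step $\left(\un\on_+\right)\xa{i}{-1}\subset \un\on_+$ is false: $\un\on_+$ is only a \emph{left} ideal, and commuting $\xa{i}{-1}$ to the left produces, e.g., $\xb{0}\xa{i}{-1}=\xa{i}{-1}\xb{0}+c\,x^{\hnu}_{\b+\a_i}(-1)$ when $\left<\b,\a_i\right>=-1$; the term $x^{\hnu}_{\b+\a_i}(-1)$ is a strictly negative mode that does not annihilate $v_\Lambda$ and hence does not lie in $I_\Lambda$.

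These two defects are not cosmetic: the entire content of the lemma is that the \emph{composite} $\ps{i}\ta{i}$ sends $\xa{}{m}$ to $\xa{}{m+\left<\a_{(0)},\gamma_i\right>-\left<\a_{(0)},(\a_i)_{(0)}\right>}\xa{i}{-1}$, and the net shift $\left<\a_{(0)},\gamma_i-(\a_i)_{(0)}\right>$ is $\geq 1$ exactly in the dangerous case $\left<\a,\a_i\right>=-1$, so that the commutator with $\xa{i}{-1}$ lands back in $\on_+$, while the case $\left<\a,\a_i\right>=2$ forces $\a=\a_i$ (Lemma~\ref{techlemma2}) and produces $\xa{i}{-1}\xa{i}{-1}=R^0(\a_i,\a_i|2)$ or $\xa{i}{-2}\xa{i}{-1}=\tfrac{1}{2}R^0(\a_i,\a_i|3)$. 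Factoring the composite as ``$\ta{i}$ into $\un\on_+$, then $\ps{i}$ of all of $\un\on_+$'' discards precisely this calibration between the upward shift and the appended $\xa{i}{-1}$. To repair your proof you would have to carry out the case analysis on $\left<\a,\a_i\right>\in\{-1,0,1,2\}$ (and on $\left<\nu\a,\a_i\right>$ in the non-fixed case) together with the commutators of Corollary~\ref{comcor} — which is exactly the argument the paper gives, so nothing is actually saved by the reduction.
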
 

\begin{proof}
We begin with the case that $\a_i$ is fixed under the action of $\nu$.  We begin by by examining $\ps{i}\ta{i}$ on the elements of $\on_+$. Assume that $m\geq 0$ and we now consider
 \be
 \ps{i}\ta{i}(\xa{}{m})=\xa{}{m+\left<\a_{(0)},\gamma_i\right>-\left<\a_{(0)},\a_i\right>}\xa{i}{-1},\ee
 where $\a\in\Delta_+$. In this case we have $\gamma_i=\lambda_i$ and make use of the isometry to write
\be 
\left<\a_{(0)},\gamma_i\right>-\left<\a_{(0)},\a_i\right>=\left<\a,\lambda_i\right>-\left<\a,\a_i\right>.
\ee
Observe that $\left<\a,\a_i\right>\in\{-1,0,1,2\}$. We use this fact to decompose this argument into a few cases. 

We begin with the case that $\left<\a,\a_i\right>=2$ which by Lemma \ref{techlemma2} we have that $\a=\a_i$ and thus $\left<\a,\lambda_i\right>=1$. Observe that 
\be
\ps{i}\ta{i}(\xa{i}{0})=\xa{i}{-1}\xa{i}{-1}=R^0(\a_i,\a_i|2)\in I_{\Lambda}.\ee
Now if $m\geq 1$ we have 
\be\begin{aligned}
\ps{i}\ta{i}(\xa{i}{m})&=\xa{i}{m-1}\xa{i}{-1}\\&=\xa{i}{-1}\xa{i}{m-1}\in\un\on_+\subset I_{\Lambda}.\end{aligned}\ee

We now move on to the case when $\left<\a,\a_i\right>=1$ and thus $\left<\a,\lambda_i\right>\geq 1$. In this case, we have that $\left<\a,\lambda_i\right>= 1$ or $\left<\a,\lambda_i\right>= 2$, and so
\be \begin{aligned}
\ps{i} \ta{i} (\xa{}{m}) = \xa{}{m-1+\left<\a,\lambda_i\right>} \xa{i}{-1} = \xa{i}{-1}\xa{}{m-1+\left<\a,\lambda_i\right>} \in \un\on_+ \subset I_{\Lambda}.
\end{aligned} \ee

When $\left<\a,\a_i\right>=0$, we have that $\left<\a,\lambda_i\right>\geq 0$. In this case, we have
\be \begin{aligned}
\ps{i} \ta{i} (\xa{}{m}) = \xa{}{m+\left<\a,\lambda_i\right>} \xa{i}{-1} = \xa{i}{-1}\xa{}{m + \left<\a,\lambda_i\right>} \in \un\on_+ \subset I_{\Lambda}.
\end{aligned} \ee

Finally, we examine the case when $\left<\a,\a_i\right>=-1$. In this case, we have that $\left<\a,\lambda_i\right> = 0$ or $\left<\a,\lambda_i\right>=1 $. In this case, we have
\be \begin{aligned}
\ps{i} \ta{i} (\xa{}{m}) &= \xa{}{m+1+\left<\a,\lambda_i\right>} \xa{i}{-1} \\
&=  \xa{i}{-1}\xa{}{m+1+\left<\a,\lambda_i\right>}+c\xaa{}{i}{m + \left<\a,\lambda_i\right>} \in \un\on_+ \subset I_{\Lambda},
\end{aligned} \ee
where $c$ is a nonzero constant given by Corollary \ref{comcor}.
Finally, we have that 
\begin{eqnarray*}
\ps{i}\ta{i}(\xa{i}{-1}) & = & \xa{i}{-2}\xa{i}{-1}\\
& = & 2 R^0(\a_i,\a_i|3),
\end{eqnarray*}
so that $\ps{i}\ta{i}(\un \xa{i}{-1}) \subset I_\Lambda$.

Now, we examine the case where $\a_i$ is not fixed by $\nu$. Throughout our calculations for this case we make use of the fact that by use of the isometry $\nu$ we may write
\be\label{innerexp}
\left<\a_{(0)},\gamma_i\right>-\left<\a_{(0)},\a_i\right>=\frac{1}{2}\left(\left<\a,\lambda_i\right>-\left<\a,\a_i\right>\right)+\frac{1}{2}\left(\left<\nu\a,\lambda_i\right>-\left<\nu\a,\a_i\right>\right)
\ee
Following our previous argument we use the fact $\left<\a,\a_i\right>,\left<\nu\a,\a_i\right>\in\{-1,0,1,2\}$ to decompose into a few subcases. Those subcases without explicit calculations follow by use of the isometry $\nu$.

First, if $\left<\a,\a_i\right>=2$ by Lemma \ref{techlemma2} we have $\a=\a_i$ and thus $\left<\a,\lambda_i\right>=1$, $\left<\nu\a,\a_i\right>=0$,  and $\left<\nu\a,\lambda_i\right>=0$. So we have 
\be
\ps{i}\ta{i}(\xa{i}{0})=\xa{i}{\frac{1}{2}}\xa{i}{\frac{1}{2}}=R^0(\a_i,\a_i|1)\in J\subset I_{\Lambda}.
\ee
Further if $m\geq\frac{1}{2}$ we see that 
\be\begin{aligned}
\ps{i}\ta{i}(\xa{i}{m})&=\xa{i}{m+\frac{1}{2}}\xa{i}{\frac{1}{2}}\\
&=\xa{i}{\frac{1}{2}}\xa{i}{m+\frac{1}{2}}\in \un\on_+ \subset I_{\Lambda}\end{aligned}\ee

Now we consider the case when $\left<\a,\a_i\right>,\left<\nu\a,\a_i\right>\in\{0,1\}$ thus $\left<\a,\lambda_i\right>\geq\left<\a,\a_i\right>$ and $\left<\nu\a,\lambda_i\right>\geq\left<\nu\a,\a_i\right>$. So using (\ref{innerexp}) we have
\be
\left<\a_{(0)},\gamma_i\right>-\left<\a_{(0)},\a_i\right>\geq 0
\ee 
and thus for all $m\geq 0$
\be\begin{aligned}
\ps{i}\ta{i}(\xa{}{m})&=\xa{}{m+\left<\a_{(0)},\gamma_i\right>-\left<\a_{(0)},\a_i\right>}\xa{i}{\frac{1}{2}}\\
&=\xa{i}{\frac{1}{2}}\xa{}{m+\left<\a_{(0)},\gamma_i\right>-\left<\a_{(0)},\a_i\right>}\in \un\on_+ \subset I_{\Lambda}\end{aligned}\ee

Now we move on to the subcase where $\left<\a,\a_i\right>\in\{0,1\}$  (thus $\left<\a,\lambda_i\right>\geq\left<\a,\a_i\right>$) and $\left<\nu\a,\a_i\right>=-1$ (thus $\left<\nu\a,\lambda_i\right>\in\{0,1\}$). Observe that in this setting we may write 
\be
\left<\a_{(0)},\gamma_i\right>-\left<\a_{(0)},\a_i\right>=\frac{1}{2}+a
\ee 
with $a\in\frac{1}{2}\ZZ_{\geq 0}$. Using Corollary \ref{comcor} we may write, for $m\geq 0$, 
\be\begin{aligned}
\ps{i}\ta{i}(\xa{}{m})&=\xa{}{m+\frac{1}{2}+a}\xa{i}{-\frac{1}{2}}\\
&=\xa{i}{-\frac{1}{2}}\xa{}{m+\frac{1}{2}+a}+cx^{\hnu}_{\a_i+\nu\a}\left(m+a\right)\in \un\on_+ \subset I_{\Lambda},\end{aligned}\ee
where $c$ is a nonzero constant. 

After combining cases by use of the isometry $\nu$ we are left with the final case of $\left<\a,\a_i\right>=\left<\nu\a,\a_i\right>=-1$, which by Lemma \ref{techlemma1} we have $\nu\a=\a$ and thus $\left<\a,\lambda_i\right>=\left<\nu\a,\lambda_i\right>\geq 0$. This implies 
\be
\left<\a_{(0)},\gamma_i\right>-\left<\a_{(0)},\a_i\right>=1+a
\ee 
where $a\in\ZZ_{\geq 0}$. Finally using Corollary \ref{comcor}, for $m\geq 0$ we have 
\be\begin{aligned}
\ps{i}\ta{i}(\xa{}{m})&=\xa{}{m+1+a}\xa{i}{-\frac{1}{2}}\\
&=\xa{i}{-\frac{1}{2}}\xa{}{m+1+a}+cx^{\hnu}_{\a_i+\nu\a}\left(m+\frac{1}{2}+a\right)\in \un\on_+ \subset I_{\Lambda},\end{aligned}\ee
where $c$ is a nonzero constant. Finally, we have that 
\begin{eqnarray*}
\ps{i}\ta{i}(\xa{i}{-\frac{1}{2}}) & = & \xa{i}{-1}\xa{i}{-\frac{1}{2}}\\
& = & 2 R^0(\a_i,\a_i|\frac{3}{2}),
\end{eqnarray*}
so that $\ps{i}\ta{i}(\un \xa{i}{-\frac{1}{2}}) \subset I_\Lambda$.
\end{proof}

\begin{lem} \label{CompContainment}
 We have 
 \begin{equation}
\ps{i} \ta{i} (I_\Lambda + \un \xa{i}{-1}) \subset I_\Lambda,
 \end{equation}
 if $\nu\a_i=\a_i$ and
 \begin{equation}
\ps{i} \ta{i} (I_\Lambda + \un \xa{i}{-\frac{1}{2}}) \subset I_\Lambda,
 \end{equation}
 if $\nu\a_i\neq\a_i$.
 \end{lem}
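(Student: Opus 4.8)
The strategy is to combine Lemma \ref{lempt01} with Lemma \ref{Containment}. Lemma \ref{lempt01} already handles the piece $\un\on_+ + \un\xa{i}{-1}$ (resp. $\un\on_+ + \un\xa{i}{-\frac{1}{2}}$), so it remains to control $\ps{i}\ta{i}(J)$. Since $J$ is the left ideal of $\un$ generated by the elements $R^0(\a_j,\a_j|t)$, and since $\ps{i}$ is built from the algebra automorphism $\tau_{-\alpha_i,\theta_i^{-1}}$ followed by right multiplication by $\xa{i}{-1}$ (resp. $\xa{i}{-\frac{1}{2}}$), the plan is first to rewrite $\ps{i}\ta{i}(a\cdot R^0(\a_j,\a_j|t))$ for $a\in\un$ as $\tau_{-\alpha_i,\theta_i^{-1}}\ta{i}(a)\cdot \tau_{-\alpha_i,\theta_i^{-1}}\ta{i}(R^0(\a_j,\a_j|t))\cdot \xa{i}{-1}$, using that $\tau_{-\alpha_i,\theta_i^{-1}}\ta{i}$ is an algebra automorphism. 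Since $\un$ acts on itself by left multiplication, it suffices to show $\ps{i}\ta{i}(R^0(\a_j,\a_j|t))\in I_\Lambda$ for every admissible $j$ and every admissible $t$.

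For $j\neq i$, the maps $\ta{i}$ and $\tau_{-\alpha_i,\theta_i^{-1}}$ fix $R^0(\a_j,\a_j|t)$ up to the scalar $\theta_i(2\a_j) = 1$ (since $\theta_i(\a_j)=1$ for $i\neq j$ in all three types), so $\ps{i}\ta{i}(R^0(\a_j,\a_j|t)) = \pm R^0(\a_j,\a_j|t)\cdot\xa{i}{-1}$; this lies in $J\cdot\xa{i}{-1} \subset J\subset I_\Lambda$ by the left-ideal property (here I would note that $R^0(\a_j,\a_j|t)$ commutes with $\xa{i}{-1}$ only up to terms again in $\un\on_+$ via Corollary \ref{comcor}, or more simply observe $R^0(\a_j,\a_j|t)\xa{i}{-1}$ is already in the left ideal $J$ times nothing—actually the cleanest route is: $\ps{i}\ta{i}(R^0(\a_j,\a_j|t))$ equals $\tau_{-\alpha_i,\theta_i^{-1}}\ta{i}(R^0(\a_j,\a_j|t))\,\xa{i}{-1}$, and the first factor is a scalar multiple of $R^0(\a_j,\a_j|t)$, which is in the left ideal $J$, hence the whole product is in $J\cdot\un \subset$... no). Let me instead phrase it as: write the product as $\xa{i}{-1}$ acting on the right, move it to the left using Corollary \ref{comcor}, producing a main term in $\un R^0(\a_j,\a_j|t) \subset J$ plus correction terms that lie in $\un\on_+$; both are in $I_\Lambda$.

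For $j=i$, I would invoke the explicit computation already carried out in the proof of Lemma \ref{Containment}: there it is shown that $\ta{i}(R^0(\a_i,\a_i|t))$ equals a scalar times $R^0(\a_i,\a_i|t-2) + 2\xa{i}{-t+2}\xa{i}{0}$ (when $\nu\a_i=\a_i$), with the analogous $t-1$ and half-integer statement when $\nu\a_i\neq\a_i$. Applying $\tau_{-\alpha_i,\theta_i^{-1}}$ (which shifts the $\a_i$-modes down by $1$, resp.\ $\tfrac12$) and then right-multiplying by $\xa{i}{-1}$ (resp.\ $\xa{i}{-\frac12}$), one gets a combination of terms of the form $R^0(\a_i,\a_i|s)\,\xa{i}{-1}$ and $\xa{i}{-t+1}\xa{i}{-1}\,\xa{i}{-1}$; the latter is $2R^0(\a_i,\a_i|\text{something})$ or lies in $\un\on_+$ after commuting (again by Corollary \ref{comcor}), and the former is in $J\cdot$ (something) which one massages into $I_\Lambda$ exactly as in the $j\neq i$ case. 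The only subtlety, and the step I expect to require the most care, is bookkeeping the half-integer versus integer mode shifts and the sign/scalar factors $\theta_i(2\a_i)=1$, and making sure that after moving $\xa{i}{-1}$ (resp.\ $\xa{i}{-\frac12}$) past an $R^0$ the correction terms from Corollary \ref{comcor} genuinely land in $\un\on_+$ and not just in $\un$; this is where one uses $\langle \gamma_i,\beta_{(0)}\rangle \ge 0$ and the mode arithmetic, exactly in the spirit of Lemma \ref{lempt01}. Combining the $J$-part with Lemma \ref{lempt01} gives the full containment $\ps{i}\ta{i}(I_\Lambda + \un\xa{i}{-1}) \subset I_\Lambda$ (resp.\ with $\xa{i}{-\frac12}$), completing the proof.
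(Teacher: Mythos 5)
Your overall architecture matches the paper's: reduce via Lemma \ref{lempt01} to showing $\ps{i}\ta{i}(J)\subset I_\Lambda$, and then, since $\tau_{-\a_i,\theta_i^{-1}}\ta{i}$ is an algebra automorphism and $I_\Lambda$ is a left ideal, reduce further to the generators $R^0(\a_j,\a_j|t)$. Your treatment of $j=i$ is also fine in spirit (though note that $\tau_{-\a_i,\theta_i^{-1}}$ shifts the $\a_i$-modes down by $\left<\a_i,\a_i\right>=2$, not by $1$, when $\nu\a_i=\a_i$; the net shift under the composite is $-1$).

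The genuine gap is the case $j\neq i$ with $\left<\a_i,\a_j\right>=-1$ or $\left<\nu\a_i,\a_j\right>=-1$, which is where almost all of the paper's proof lives, and two of your assertions fail there. First, $\tau_{-\a_i,\theta_i^{-1}}\ta{i}$ does \emph{not} fix $R^0(\a_j,\a_j|t)$: since $\left<(\a_j)_{(0)},\gamma_i\right>=0$ for distinct orbit representatives, the composite shifts the modes of $\xa{j}{m}$ by $-\left<(\a_j)_{(0)},(\a_i)_{(0)}\right>$, which is strictly positive ($+1$ or $+\frac{1}{2}$) for adjacent nodes, so you obtain a shifted quadratic sum such as $\sum\xa{j}{m_1+1}\xa{j}{m_2+1}$, not $\pm R^0(\a_j,\a_j|t)$. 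Second, and more seriously, the correction terms produced by commuting $\xa{i}{-1}$ (resp.\ $\xa{i}{-\frac{1}{2}}$) leftward past these factors are, by Corollary \ref{comcor}, multiples of $\xaa{i}{j}{m_2}$ with $m_2<0$; these are \emph{not} in $\un\on_{+}$, and the mechanism you invoke from Lemma \ref{lempt01} (which works because $\ta{i}$ has pushed the relevant modes into the nonnegative range) does not apply to them. The paper needs a separate idea here: the two families of cross terms $\sum\xa{j}{m_1+1}\xaa{i}{j}{m_2}$ and $\sum\xaa{i}{j}{m_1}\xa{j}{m_2+1}$ are recombined, modulo genuine $\un\on_{+}$ boundary terms, into a commutator $[R^0(\a_j,\a_j|t-1),\xa{i}{0}]$, which lies in $I_\Lambda$ because $\xa{i}{0}R^0(\a_j,\a_j|t-1)\in\un J$ while $R^0(\a_j,\a_j|t-1)\xa{i}{0}\in\un\on_{+}$. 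Without this (or an equivalent) reorganization your argument does not close; the adjacent-root cross terms are exactly where the content of the lemma lies.
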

 
 \begin{proof}
 In view of Lemma \ref{lempt01}, it suffices to show that
 \be
 \ps{i} \ta{i}(J) \subset I_\Lambda.
 \ee
 As with Lemma \ref{lempt0}, we begin with the case that $\nu \alpha_i = \alpha_i$.  First, applying $\psi_{\gamma_i,\theta_i} \ta{i}$ to $R(\alpha_i,\alpha_i|t)$, we have that:
\begin{eqnarray*}
\ps{i} \ta{i}(R^0(\alpha_i,\alpha_i|t)) &=&\ps{i} \ta{i}\left(\sum_{\substack{  m_1,m_2\in\ZZ_{<0} \\ m_1+m_2=-t}}\xa{i}{m_1}\xa{i}{m_2}\right)\\
 &=& \sum_{\substack{  m_1,m_2\in\ZZ_{<0} \\ m_1+m_2=-t}}\xa{i}{m_1-1}\xa{i}{m_2-1}\xa{i}{-1}\\
 &=& \xa{i}{-1}R^0(\alpha_i,\alpha_i | t+2) + aR^0(\alpha_i,\alpha_i |2) \in I_\Lambda \\
 \end{eqnarray*}
 for some $a \in \un$. Now, we apply $\psi_{\gamma_i,\theta_i} \ta{i}$ to $R(\alpha_j,\alpha_j|t)$, where $i \neq j$. If $\langle \alpha_i, \alpha_j \rangle = 0$, we have that 
\be
\ps{i} \ta{i}(R^0(\alpha_j,\alpha_j|t)) = \xa{i}{-1}R^0(\alpha_j,\alpha_j | t) \in I_\Lambda.
\ee
 If $\langle \alpha_i, \alpha_j \rangle =-1$, we have two cases to consider: the case where $\nu \alpha_j = \alpha_j$ and the case where $\nu \alpha_j \neq \alpha_j$. 
 If $\nu \alpha_j = \alpha_j$, we have
\begin{eqnarray*}
\ps{i} \ta{i}(R^0(\alpha_j,\alpha_j|t)) &=& \ps{i} \ta{i} \left(\sum_{\substack{  m_1,m_2\in\ZZ_{<0} \\ m_1+m_2=-t}}\xa{j}{m_1}\xa{j}{m_2}\right)\\
& = &\left(\sum_{\substack{  m_1,m_2\in\ZZ_{<0} \\ m_1+m_2=-t}}\xa{j}{m_1+1}\xa{j}{m_2+1}\right)\xa{i}{-1}\\
& = &\xa{i}{-1} \left(\sum_{\substack{  m_1,m_2\in\ZZ_{<0} \\ m_1+m_2=-t}}\xa{j}{m_1+1}\xa{j}{m_2+1}\right) \\
&& + \left(\sum_{\substack{  m_1,m_2\in\ZZ_{<0} \\ m_1+m_2=-t}}\xa{j}{m_1+1}\xaa{i}{j}{m_2}\right)\epsilon_{C_0}(\alpha_j,\alpha_i)\\
&& + \left(\sum_{\substack{  m_1,m_2\in\ZZ_{<0} \\ m_1+m_2=-t}}\xaa{i}{j}{m_1}\xa{j}{m_2+1}\right)\epsilon_{C_0}(\alpha_j,\alpha_i)\\
&=& \xa{i}{-1}R^0(\alpha_j,\alpha_j|t) + [R^0(\alpha_j,\alpha_j|t-1),\xa{i}{0}] + b \in I_\Lambda
 \end{eqnarray*} 
 for some $b \in \un \on_+$. If $\nu \alpha_j \neq \alpha_j$, we have
  \begin{eqnarray}
\ps{i} \ta{i}(R^0(\alpha_j,\alpha_j|t)) &=& \ps{i} \ta{i} \left(\sum_{\substack{  m_1,m_2\in\frac{1}{2}\ZZ_{<0} \\ m_1+m_2=-t}}\xa{j}{m_1}\xa{j}{m_2}\right)\\
& = &\left(\sum_{\substack{  m_1,m_2\in\frac{1}{2}\ZZ_{<0} \\ m_1+m_2=-t}}\xa{j}{m_1+1}\xa{j}{m_2+1}\right)\xa{i}{-1}\\
& = &\xa{i}{-1} \left(\sum_{\substack{  m_1,m_2\in\frac{1}{2}\ZZ_{<0} \\ m_1+m_2=-t}}\xa{j}{m_1+1}\xa{j}{m_2+1}\right) \\
&& -  \epsilon_{C_0}(\alpha_j,\alpha_i)\left(\sum_{\substack{  m_1,m_2\in\frac{1}{2}\ZZ_{<0} \\ m_1+m_2=-t}}\xa{j}{m_1+1}\xaa{i}{j}{m_2}\right)\\
&& - \epsilon_{C_0}(\alpha_j,\alpha_i)\left(\sum_{\substack{  m_1,m_2\in\frac{1}{2}\ZZ_{<0} \\ m_1+m_2=-t}}\xaa{i}{j}{m_1}\xa{j}{m_2+1}\right)\\
& = & \xa{i}{-1} R^0(\alpha_j,\alpha_j|t-2) \\
&&  \label{annoyingcomp1} -\epsilon_{C_0}(\alpha_j,\alpha_i)\left(\sum_{\substack{  m_1,m_2\in\frac{1}{2}\ZZ_{<0} \\ m_1+m_2=-t}}\xa{j}{m_1+1}\xaa{i}{j}{m_2}\right)\\
&&\label{annoyingcomp2} - \epsilon_{C_0}(\alpha_j,\alpha_i)\left(\sum_{\substack{  m_1,m_2\in\frac{1}{2}\ZZ_{<0} \\ m_1+m_2=-t}}\xaa{i}{j}{m_1}\xa{j}{m_2+1}\right)
 \end{eqnarray} 
 where $c \in \mathbb{C}$.
We now analyze the last two sums (\ref{annoyingcomp1}) and (\ref{annoyingcomp2}). First, for (\ref{annoyingcomp1}), we have 
\begin{eqnarray*}
\left(\sum_{\substack{  m_1,m_2\in\frac{1}{2}\ZZ_{<0} \\ m_1+m_2=-t}}\xa{j}{m_1}\xaa{i}{j}{m_2}\right) &=& \left(\sum_{\substack{  m_1,m_2\in\frac{1}{2}\ZZ_{<0} \\ m_1+m_2=-t+1}}\xa{j}{m_1}\xaa{i}{j}{m_2}\right)\\
&& + \xa{j}{\frac{1}{2}}\xaa{i}{j}{-t+\frac{1}{2}} + \xa{j}{0}\xaa{i}{j}{-t+1}
\end{eqnarray*}
Notice that if $\langle \nu \alpha_j, \a_i+\a_j \rangle \ge 0$, we have
\begin{eqnarray*}
\xa{j}{\frac{1}{2}}\xaa{i}{j}{-t+\frac{1}{2}} + \xa{j}{0}\xaa{i}{j}{-t+1} &=& \xaa{i}{j}{-t+\frac{1}{2}}\xa{j}{\frac{1}{2}} + \xaa{i}{j}{-t+1}\xa{j}{0}\\
 &\in& \un \on_+
\end{eqnarray*} 
if $\langle \nu \alpha_j, \a_i+\a_j \rangle = -1$, we have
\begin{eqnarray*}
\lefteqn{\xa{j}{\frac{1}{2}}\xaa{i}{j}{-t+\frac{1}{2}} + \xa{j}{0}\xaa{i}{j}{-t+1}}\\
&=& -x_{\nu\alpha_j}^{\hnu}\left({\frac{1}{2}}\right)\xaa{i}{j}{-t+\frac{1}{2}} + x_{\nu\alpha_j}^{\hnu}(0)\xaa{i}{j}{-t+1}\\
&=& -\epsilon_{C_0}(\nu \a_j, \a_i + \a_j)x_{\nu\alpha_j + \alpha_i + \alpha_j}^{\hnu}(-t+1) + \xaa{i}{j}{-t+\frac{1}{2}}x_{\nu\alpha_j}^{\hnu}\left({\frac{1}{2}}\right)\\
&& + \epsilon_{C_0}(\nu \a_j, \a_i + \a_j)x_{\nu\alpha_j + \alpha_i + \alpha_j}^{\hnu}(-t+1) + \xaa{i}{j}{-t+1}x_{\nu\alpha_j}^{\hnu}(0)\\
& \in & \un \on_+
\end{eqnarray*} 
so that (\ref{annoyingcomp1}) can be expressed as 
\be
\epsilon_{C_0}(\alpha_j,\alpha_i)\left(\sum_{\substack{  m_1,m_2\in\frac{1}{2}\ZZ_{<0} \\ m_1+m_2=-t+1}}\xa{j}{m_1+1}\xaa{i}{j}{m_2}\right) + b
\ee
 for some $b \in \un \on_+$.
For the second sum (\ref{annoyingcomp2}), we have 
\begin{eqnarray*}
\lefteqn{\sum_{\substack{  m_1,m_2\in\frac{1}{2}\ZZ_{<0} \\ m_1+m_2=-t}}\xaa{i}{j}{m_1}\xa{j}{m_2+1}}\\
&=& \sum_{\substack{  m_1,m_2\in\frac{1}{2}\ZZ_{<0} \\ m_1+m_2=-t+1}}\xaa{i}{j}{m_1}\xa{j}{m_2} + \xaa{i}{j}{-t+1}\xa{j}{0} + \xaa{i}{j}{-t+\frac{1}{2}}\xa{j}{\frac{1}{2}}\\
&=&\sum_{\substack{  m_1,m_2\in\frac{1}{2}\ZZ_{<0} \\ m_1+m_2=-t+1}}\xaa{i}{j}{m_1}\xa{j}{m_2} + b
\end{eqnarray*}
for some $b \in \un \on_+$. Putting these results together, we see that the sums (\ref{annoyingcomp1}) and (\ref{annoyingcomp2}) can be combined and written as
\be
[R(\a_j, \a_j|t-1),\xa{i}{0}] + b
\ee
for some $b \in \un \on_+$, and so we have
\be
\ps{i} \ta{i}(R^0(\alpha_j,\alpha_j|t)) \in I_\Lambda
\ee
 as desired.

 We now examine the case when $\nu \alpha_i \neq \alpha_i$.
 Applying $\psi_{\gamma_i,\theta_i} \ta{i}$ to $R(\alpha_i,\alpha_i|t)$, we have that:
\begin{eqnarray*}
\ps{i} \ta{i}(R^0(\alpha_i,\alpha_i|t)) &=&\ps{i} \ta{i}\left(\sum_{\substack{  m_1,m_2\in\frac{1}{2}\ZZ_{<0} \\ m_1+m_2=-t}}\xa{i}{m_1}\xa{i}{m_2}\right)\\
 &=& \sum_{\substack{  m_1,m_2\in\frac{1}{2}\ZZ_{<0} \\ m_1+m_2=-t}}\xa{i}{m_1-\frac{1}{2}}\xa{i}{m_2-\frac{1}{2}}\xa{i}{-\frac{1}{2}}\\
 &=& \xa{i}{-\frac{1}{2}}R^0(\alpha_i,\alpha_i | t+1) + aR^0(\alpha_i,\alpha_i |1) \in I_\Lambda \\
 \end{eqnarray*}
 for some $a \in \un$. Now, we apply $\psi_{\gamma_i,\theta_i} \ta{i}$ to $R(\alpha_j,\alpha_j|t)$, where $i \neq j$. If $\langle \alpha_i, \alpha_j \rangle = 0$, we have that 
\begin{equation*}\begin{aligned}
\ps{i} \ta{i}(R^0(\alpha_j,\alpha_j|t)) &= \xa{i}{-\frac{1}{2}}R^0(\alpha_j,\alpha_j | t) \in I_\Lambda
 \end{aligned}\end{equation*} 
 if $\langle \a_i, \nu \a_j \rangle \ge 0$ and 
\begin{equation*}\begin{aligned}
\ps{i} \ta{i}(R^0(\alpha_j,\alpha_j|t)) &= \ps{i} \ta{i} \left(\sum_{\substack{  m_1,m_2\in\frac{1}{2}\ZZ_{<0} \\ m_1+m_2=-t}}\xa{j}{m_1}\xa{j}{m_2}\right)\\
& = \left(\sum_{\substack{  m_1,m_2\in\frac{1}{2}\ZZ_{<0} \\ m_1+m_2=-t}}\xa{j}{m_1+\frac{1}{2}}\xa{j}{m_2+\frac{1}{2}}\right)\xa{i}{-\frac{1}{2}}\\
& = -\left(\sum_{\substack{  m_1,m_2\in\frac{1}{2}\ZZ_{<0} \\ m_1+m_2=-t}}\xa{j}{m_1+\frac{1}{2}}\xa{j}{m_2+\frac{1}{2}}\right)x_{\nu \a_i}^{\hnu} \left(-\frac{1}{2}\right)\\
& = -x_{\nu \a_i}^{\hnu} \left(-\frac{1}{2}\right) \left(\sum_{\substack{  m_1,m_2\in\frac{1}{2}\ZZ_{<0} \\ m_1+m_2=-t}}\xa{j}{m_1+\frac{1}{2}}\xa{j}{m_2+\frac{1}{2}}\right) \\
& + \frac{1}{2}\epsilon_{C_0}(\alpha_j,\nu\alpha_i)\left(\sum_{\substack{  m_1,m_2\in\frac{1}{2}\ZZ_{<0} \\ m_1+m_2=-t}}\xa{j}{m_1+\frac{1}{2}}x_{\alpha_j + \nu \a_i}^{\hnu}({m_2})\right)\\
& + \frac{1}{2}\epsilon_{C_0}(\alpha_j,\nu\alpha_i)\left(\sum_{\substack{  m_1,m_2\in\frac{1}{2}\ZZ_{<0} \\ m_1+m_2=-t}}x_{\nu \a_i}^{\hnu}({m_1+\frac{1}{2}})\xa{j}{m_2}\right)\\
&= -x_{\nu \a_i}^{\hnu}\left(\frac{1}{2}\right)R^0(\alpha_j,\alpha_j|t-1) + [R^0(\alpha_j,\alpha_j|t-\frac{1}{2}),x_{\nu \a_i}^{\hnu}({0})] + b \in I_\Lambda
\end{aligned} \end{equation*}  
 for some $b \in \un \on_+$
  if $\langle \a_i, \nu \a_j \rangle = -1$ (or equivalently $\langle \nu \a_i,  \a_j \rangle = -1$).
  
As before, if $\langle \alpha_i, \alpha_j \rangle =-1$, we have two cases to consider: the case where $\nu \alpha_j = \alpha_j$ and the case where $\nu \alpha_j \neq \alpha_j$. 
  If $\nu \alpha_j = \alpha_j$, we have
 \begin{eqnarray*}
\ps{i} \ta{i}(R^0(\alpha_j,\alpha_j|t)) &=& \ps{i} \ta{i} \left(\sum_{\substack{  m_1,m_2\in\ZZ_{<0} \\ m_1+m_2=-t}}\xa{j}{m_1}\xa{j}{m_2}\right)\\
& = &\left(\sum_{\substack{  m_1,m_2\in\ZZ_{<0} \\ m_1+m_2=-t}}\xa{j}{m_1+1}\xa{j}{m_2+1}\right)\xa{i}{-\frac{1}{2}}\\
& = &\xa{i}{-\frac{1}{2}} \left(\sum_{\substack{  m_1,m_2\in\ZZ_{<0} \\ m_1+m_2=-t}}\xa{j}{m_1+1}\xa{j}{m_2+1}\right) \\
&& + \epsilon_{C_0}(\alpha_j,\alpha_i)\left(\sum_{\substack{  m_1,m_2\in\ZZ_{<0} \\ m_1+m_2=-t}}\xa{j}{m_1+1}\xaa{i}{j}{m_2+\frac{1}{2}}\right)\\
&& + \epsilon_{C_0}(\alpha_j,\alpha_i)\left(\sum_{\substack{  m_1,m_2\in\ZZ_{<0} \\ m_1+m_2=-t}}\xaa{i}{j}{m_1+\frac{1}{2}}\xa{j}{m_2+1}\right)\\
&=& \xa{i}{-1}R^0(\alpha_j,\alpha_j|t) + [R^0(\alpha_j,\alpha_j|t-1),\xa{i}{\frac{1}{2}}] + b \in I_\Lambda
 \end{eqnarray*} 
 for some $b \in \un \on_+$.
 
If $\nu \alpha_j \neq \alpha_j$, we have examine two cases: when $\langle \nu \a_i, \a_j \rangle = 0$ and when 
 $\langle \nu \a_i, \a_j \rangle = -1$. If $\langle \nu \a_i, \a_j \rangle = 0$, we have
  \begin{eqnarray*}
\ps{i} \ta{i}(R^0(\alpha_j,\alpha_j|t)) &=& \ps{i} \ta{i} \left(\sum_{\substack{  m_1,m_2\in\frac{1}{2}\ZZ_{<0} \\ m_1+m_2=-t}}\xa{j}{m_1}\xa{j}{m_2}\right)\\
& = &\left(\sum_{\substack{  m_1,m_2\in\frac{1}{2}\ZZ_{<0} \\ m_1+m_2=-t}}\xa{j}{m_1+\frac{1}{2}}\xa{j}{m_2+\frac{1}{2}}\right)\xa{i}{-\frac{1}{2}}\\
& = &\xa{i}{-\frac{1}{2}} \left(\sum_{\substack{  m_1,m_2\in\frac{1}{2}\ZZ_{<0} \\ m_1+m_2=-t}}\xa{j}{m_1+\frac{1}{2}}\xa{j}{m_2+\frac{1}{2}}\right) \\
&& + \frac{1}{2}\epsilon_{C_0}(\alpha_j,\alpha_i)\left(\sum_{\substack{  m_1,m_2\in\frac{1}{2}\ZZ_{<0} \\ m_1+m_2=-t}}\xa{j}{m_1+\frac{1}{2}}\xaa{i}{j}{m_2}\right)\\
&& +\frac{1}{2} \epsilon_{C_0}(\alpha_j,\alpha_i)\left(\sum_{\substack{  m_1,m_2\in\frac{1}{2}\ZZ_{<0} \\ m_1+m_2=-t}}\xaa{i}{j}{m_1}\xa{j}{m_2+\frac{1}{2}}\right)\\
&=& \xa{i}{-\frac{1}{2}}R^0(\alpha_j,\alpha_j|t) + [R^0(\alpha_j,\alpha_j|t-\frac{1}{2}),\xa{i}{0}] + b \in I_\Lambda
 \end{eqnarray*} 
 for some $b \in \un \on_+$. If $\langle \nu \a_i, \a_j \rangle = -1$, we have
   \begin{eqnarray*}
\ps{i} \ta{i}(R^0(\alpha_j,\alpha_j|t)) &=& \ps{i} \ta{i} \left(\sum_{\substack{  m_1,m_2\in\frac{1}{2}\ZZ_{<0} \\ m_1+m_2=-t}}\xa{j}{m_1}\xa{j}{m_2}\right)\\
& = &\left(\sum_{\substack{  m_1,m_2\in\frac{1}{2}\ZZ_{<0} \\ m_1+m_2=-t}}\xa{j}{m_1+1}\xa{j}{m_2+1}\right)\xa{i}{-\frac{1}{2}}\\
& = &\xa{i}{-\frac{1}{2}} \left(\sum_{\substack{  m_1,m_2\in\frac{1}{2}\ZZ_{<0} \\ m_1+m_2=-t}}\xa{j}{m_1+1}\xa{j}{m_2+1}\right) \\
&& + \epsilon_{C_0}(\alpha_j,\alpha_i)\left(\sum_{\substack{  m_1,m_2\in\frac{1}{2}\ZZ_{<0} \\ m_1+m_2=-t}}\xa{j}{m_1+1}\xaa{i}{j}{m_2+\frac{1}{2}}\right)\\
&& + \epsilon_{C_0}(\alpha_j,\alpha_i)\left(\sum_{\substack{  m_1,m_2\in\frac{1}{2}\ZZ_{<0} \\ m_1+m_2=-t}}\xaa{i}{j}{m_1+\frac{1}{2}}\xa{j}{m_2+1}\right)\\
&=& \xa{i}{-\frac{1}{2}}R^0(\alpha_j,\alpha_j|t) + [R^0(\alpha_j,\alpha_j|t-\frac{3}{2}),\xa{i}{0}] + b \in I_\Lambda
 \end{eqnarray*} 
  for some $b \in \un \on_+$, completing our proof.

 \end{proof}

Consider the maps
 \be\begin{aligned}
 e_{\a_i}&:V_L^T\to V_L^T
  \end{aligned}\ee
  and their restriction to the principal subspace $W_L^T\subset V_L^T$ where 
  \be\begin{aligned}
  e_{\a_i}\cdot 1&=\frac{2}{\sigma(\a_i)}\xa{i}{-1}\cdot 1 \text{ if } \nu \a_i = \a_i\\
    e_{\a_i}\cdot 1&=\frac{2}{\sigma(\a_i)}\xa{i}{-\frac{1}{2}}\cdot 1\text{ if } \nu \a_i \neq\a_i,
 \end{aligned}\ee
 and
 \be
 e_{\a_i}\xbet{n}=C(\a_i,\b)\xbet{n-\left<\beta_{(0)},\a_i\right>}e_{\a_i}.
 \ee

 Recall that we have a basis dual to the $\ZZ$-basis with respect to $\left<\cdot,\cdot\right>$, of $L$ given by $\{\lambda_i |1\leq i\leq l\}$ as described in (\ref{dualbasis}). For $\lambda\in\{\lambda_i |1\leq i\leq l\}$, we now introduce the map 
 \be
 \Delta^{T}(\lambda,-x)=(-1)^{\nu \lambda}x^{\lambda_{(0)}}E^{+}(-\lambda,x).
 \ee
 This is a twisted version of the $\Delta$-map in \cite{Li2}. Recently such twisted maps have been exploited in \cite{CalLM4}, \cite{CalMPe}, and \cite{PS}. We will make use the constant term of $\Delta^T(\lambda,-x)$ denoted by $\Delta_c^T(\lambda,-x)$. Let us explore the action of $\Delta_c^T(\lambda,-x)$ on the operators $\xbet{m}$ where $\b\in\Delta_+$. This calculation will be made in two parts, the first being when $\nu\a_i=\a_i$ and thus $\nu\lambda_i=\lambda_i$:
 \begin{equation}\begin{aligned}
 \Delta_c^T(\lambda_i,-x)(\xbet{m}\cdot 1)&=\text{Coeff}_{x^0x_1^{-m-1}}\left(\Delta^T(\lambda_i,-x)Y^{\hnu}(e^{\b},x_1)\right)\\
 &=\theta_i(\beta)\text{Coeff}_{x_1^{\left<\lambda_i,\b\right>-m-1}}Y^{\hnu}(e^{\b},x_1)\\
 &=\theta_i(\beta)\xbet{m+\left<\lambda_i,\b\right>}\cdot 1\\
 &=\ta{i}(\xbet{m})\cdot 1,
 \end{aligned}\end{equation}
 where we have used the fact that
 \be
 E^{+}(-\lambda_i,x)E^{-}(-\b,x_1)=\left(1-\frac{x_1}{x}\right)^{\left<\lambda_i,\b\right>}E^{-}(-\b,x_1)E^{+}(-\lambda_i,x).
 \ee 
 Now for the case when $\nu\lambda_i\neq\lambda_i$:
  \begin{equation}\begin{aligned}
 \Delta_c^T(\lambda_i,-x)(\xbet{m}\cdot 1)&=\text{Coeff}_{x^0x_1^{-m-1}}\left(\Delta^T(\lambda_i,-x)Y^{\hnu}(e^{\b},x_1)\right)\\
 &=\theta_i(\beta)\text{Coeff}_{x_1^{\frac{1}{2}\left(\left<\lambda_i,\b\right>+\left<\nu\lambda_i,\b\right>\right)-m-1}}Y^{\hnu}(e^{\b},x_1)\\
 &=\theta_i(\beta)\xbet{m+\frac{1}{2}\left(\left<\lambda_i,\b\right>+\left<\nu\lambda_i,\b\right>)\right)}\cdot 1\\
 &=\ta{i}(\xbet{m})\cdot 1,
 \end{aligned}\end{equation}
 where we have used
 \be
 E^{+}(-\lambda_i,x)E^{-}(-\b,x_1)=\left(1-\frac{x_1^{\frac{1}{2}}}{x^{\frac{1}{2}}}\right)^{\left<\lambda_i,\b\right>}\left(1+\frac{x_1^{\frac{1}{2}}}{x^{\frac{1}{2}}}\right)^{\left<\nu\lambda_i,\b\right>}E^{-}(-\b,x_1)E^{+}(-\lambda_i,x).
 \ee 
 It follows that we have linear maps
 \begin{equation}\label{DeltaProperty}\begin{aligned}
 \Delta_c^T(\lambda_i,-x):W_L^T&\to W_L^T\\
 a\cdot 1&\mapsto \ta{i}(a)\cdot 1,
 \end{aligned}\ee
 where $a\in\un$.

 \section{The Main Results}
 
 \begin{thm}\label{presentation}We have
\begin{equation}\text{Ker}~ f_{\Lambda}=I_{\Lambda},
\end{equation}
or equivalently 
\begin{equation}\text{Ker}~ \pi_{\Lambda}=I_{\Lambda}\cdot v_{\Lambda}^{N}.\end{equation}
\end{thm}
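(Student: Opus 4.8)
The plan is to establish the two inclusions $I_\Lambda \subseteq \operatorname{Ker} f_\Lambda$ and $\operatorname{Ker} f_\Lambda \subseteq I_\Lambda$ separately, following the strategy of \cite{CalLM4}, \cite{CalMPe}, and \cite{PS}. The first inclusion is the easy direction: since $\overline{\mathfrak{n}}[\hnu]_+$ annihilates $v_\Lambda = 1$ (these are the nonnegative-mode operators acting on the highest weight vector), we have $\un \overline{\mathfrak{n}}[\hnu]_+ \subseteq \operatorname{Ker} f_\Lambda$; and by the Corollary following Proposition \ref{relationgens}, each $R(\a_i,\a_i|t)$ annihilates every vector of $V_L^T$, hence so does $R^0(\a_i,\a_i|t)$ modulo $\widetilde{\un\overline{\mathfrak{n}}[\hnu]_+}$ (by the Proposition expressing $R = R^0 + a$ with $a \in \widetilde{\un\overline{\mathfrak{n}}[\hnu]_+}$), so $J \cdot v_\Lambda = 0$ as well. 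Therefore $I_\Lambda = J + \un\overline{\mathfrak{n}}[\hnu]_+ \subseteq \operatorname{Ker} f_\Lambda$, and $f_\Lambda$ descends to a surjection $\overline{f_\Lambda}: \un/I_\Lambda \to W_L^T$.

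For the reverse inclusion, I would argue that $\overline{f_\Lambda}$ is injective by exhibiting a left inverse, or rather by an inductive/functorial argument on the grading. The key tools are exactly the maps constructed in Section 4: the shift automorphisms $\ta{i}$, the maps $\ps{i}$, and the operators $e_{\a_i}$ and $\Delta^T_c(\lambda_i,-x)$ acting on $W_L^T$. The structure of the argument: one shows that for each simple root index $i$ (with $\nu\a_i = \a_i$ or $\nu\a_i \neq \a_i$) there is a commutative diagram relating $\un/I_\Lambda$ to itself via $\ps{i}\ta{i}$ and relating $W_L^T$ to itself via $e_{\a_i} \circ \Delta^T_c(\lambda_i,-x)$ (using (\ref{DeltaProperty}), which says $\Delta^T_c(\lambda_i,-x)$ implements $\ta{i}$ on $W_L^T$, and the definition of $e_{\a_i}$, which implements left multiplication by $\xa{i}{-1}$ or $\xa{i}{-\frac{1}{2}}$ twisted by a shift). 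Lemmas \ref{Containment}, \ref{lempt01}, and \ref{CompContainment} are precisely what is needed to check that these maps are well-defined on the quotient $\un/I_\Lambda$: Lemma \ref{Containment} gives $\ta{i}(I_\Lambda + \un\xa{i}{-1}) = I_\Lambda$, and Lemma \ref{CompContainment} gives $\ps{i}\ta{i}(I_\Lambda + \un\xa{i}{-1}) \subseteq I_\Lambda$. One then runs an induction on charge (or on the weight grading): given $a \in \un$ with $f_\Lambda(a) = 0$, decompose $a$ according to charge in the $i$-th coordinate; the lowest-charge piece, modulo $I_\Lambda$, can be pulled back through $\ps{i}\ta{i}$ to a strictly smaller element, and by the induction hypothesis that element lies in $I_\Lambda$, forcing the original piece into $I_\Lambda$. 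Iterating over all coordinates reduces $a$ to total charge zero, where the statement is trivial since $\un/I_\Lambda$ in total charge zero is spanned by the image of $1$.

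The main obstacle — and the step I would spend the most care on — is verifying that the geometric/vertex-operator side of the diagram genuinely intertwines with the algebraic side, i.e. that the diagram
\[
\begin{CD}
\un/I_\Lambda @>{\ps{i}\ta{i}}>> \un/I_\Lambda\\
@V{\overline{f_\Lambda}}VV @VV{\overline{f_\Lambda}}V\\
W_L^T @>{e_{\a_i}\circ\Delta^T_c(\lambda_i,-x)}>> W_L^T
\end{CD}
\]
commutes, and that $e_{\a_i}: W_L^T \to W_L^T$ is injective (so that the lowest-charge component is detected). Injectivity of $e_{\a_i}$ should follow from its definition together with the $\hnu$-twisted vertex operator commutation relations and the fact that $V_L^T$ is an irreducible $\hat{\g}[\hnu]$-module (Theorem \ref{reptheorem}); commutativity of the diagram reduces, via (\ref{DeltaProperty}) and the definition $e_{\a_i}\xbet{n} = C(\a_i,\b)\xbet{n - \langle\beta_{(0)},\a_i\rangle}e_{\a_i}$, to a bookkeeping check that the mode shifts match those defining $\tau_{-\a_i,\theta_i^{-1}}$ and then multiplication by $\xa{i}{-1}$ (resp. $\xa{i}{-\frac{1}{2}}$). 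The base case and the passage from "$\overline{f_\Lambda}$ injective in total charge $<N$" to "total charge $N$" then go through formally as in \cite{CalLM4} and \cite{PS}, and combined with surjectivity this yields $\operatorname{Ker} f_\Lambda = I_\Lambda$. The equivalent statement about $\operatorname{Ker}\pi_\Lambda$ follows since $\pi_\Lambda = \Pi|_{W_L^{T,N}}$ and $f_\Lambda = \pi \circ f^N_\Lambda$ with $f^N_\Lambda$ having kernel contained in (in fact equal to) the corresponding ideal inside $\un$ acting on the generalized Verma module.
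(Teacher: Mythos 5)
Your proposal is correct and follows essentially the same route as the paper: the easy inclusion via $R=R^0+a$ with $a$ in the completion of $\un\on_{+}$, and the hard inclusion by a minimal-counterexample/induction on total charge and $L^{\hnu}(0)$-weight, using (\ref{DeltaProperty}) to realize $\ta{i}$ on $W_L^T$, the injectivity of $e_{\a_i}$ to strip off a factor of $\xa{i}{-1}$ (resp. $\xa{i}{-\frac{1}{2}}$), and Lemmas \ref{Containment} and \ref{CompContainment} to push the result back into $I_\Lambda$. The paper phrases this as a contradiction with a minimal element rather than a commutative-diagram induction, but the ingredients and logic are the same.
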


\begin{proof} The inclusion
\be
I_\Lambda \cdot v_\Lambda^N \subset \mathrm{Ker} \ \pi_\Lambda
\ee
is trivial. The remainder of the proof will be for the reverse inclusion.
Suppose that $a \in \ker \pi_\Lambda \setminus I_\Lambda \cdot v_\Lambda^N$. We may assume that $a$ is homogeneous with respect to all gradings and is a nonzero such element with smallest 
possible total charge. (Consequently, $a$ has a nonzero charge for some $(\lambda^{(i)})_{(0)}$.) Among all elements of smallest possible total charge, we may assume that $a$ is also of smallest $L^{\hnu} (0)$-weight.
We first show that either 
\begin{equation}
a \in I_\Lambda + \un \xa{i}{-1}
\end{equation}
where $\nu\a_i=\a_i$, or 
\begin{equation}
a \in I_\Lambda + \un \xa{i}{-\frac{1}{2}},
\end{equation}
where $\nu\a_i\neq \a_i$.
Suppose not. Then
 by Lemma \ref{Containment} we have that 
\begin{equation}
\ta{i}(a) \cdot v_{\Lambda}^N \notin I_\Lambda \cdot v_\Lambda^N.
\end{equation}
We also have that 
\begin{equation}
a \cdot 1 = \pi_\Lambda (a \cdot v_\Lambda^N) = 0,
\end{equation}
and so using (\ref{DeltaProperty}), we have
\begin{equation}
\pi_\Lambda (\ta{i}(a) \cdot v_{\Lambda}^N) = \ta{i}(a) \cdot 1 = 0.
\end{equation}
But, since 
\begin{equation}
wt(\ta{i}(a)) < wt(a)
\end{equation}
we have $\ta{i}(a) \in I_\Lambda$, contradicting the minimality of $a$, and so
\begin{equation}
a \in I_\Lambda + \un \xa{i}{-1}
\end{equation}
in the case that $\nu\a_i=\a_i$ or 
\begin{equation}
a \in I_\Lambda + \un \xa{i}{-\frac{1}{2}},
\end{equation}
if $\nu\a_i\neq \a_i$.
First, we suppose that $\a_i$ is fixed under $\nu$ and $a \in I_\Lambda + \un \xa{i}{-1}$, so that
$$
a = b + c\xa{i}{-1}
$$
for some $b \in I_\Lambda$ and $c \in \un$. Notice that $b$ and $c\xa{i}{-1}$ are of the same total $L^{\hnu}(0)$ weight and total charge as $a$.  We have that 
\begin{eqnarray*}
c \xa{i}{-1} \cdot 1 = (a-b) \cdot 1 =0,
\end{eqnarray*}
so that $c\xa{i}{-1} \cdot v_\Lambda^n \in \ker \pi_\Lambda \setminus I_\Lambda \cdot v_\Lambda^N$. We also have that 
\begin{equation} 
c\xa{i}{-1}\cdot 1 = e_{\alpha_i}(\tau_{\alpha_i,\theta_i}(c) \cdot 1) = 0
\end{equation}
so that, by the injectivity of $e_{\alpha_i}$, we have that 
\begin{equation}
\tau_{\alpha_i,\theta_i}(c) \cdot 1 = 0.
\end{equation}
Since  $\tau_{\alpha_i,\theta_i}(c)$ has lower total charge than $a$, we have that 
\begin{equation}
\tau_{\alpha_i,\theta_i}(c) \in I_\Lambda.
\end{equation}
In view of Lemma \ref{Containment}, we have that 
\begin{equation}
\tau_{-\gamma_i,\theta_i^{-1}}(\tau_{\alpha_i,\theta_i}(c)) \in I_\Lambda + \un \xa{i}{-1}.
\end{equation}
Finally, applying Lemma \ref{CompContainment} immediately gives
\begin{equation}
\psi_{\gamma_i,\tau_i}\tau_{\gamma_i,\theta_i}\tau_{-\gamma_i,\theta_i^{-1}}(\tau_{\alpha_i,\theta_i}(c)) = c\xa{i}{-1} \in I_\Lambda.
\end{equation}
Thus, we have that 
\begin{equation}
a = b + c\xa{i}{-1} \in I_\Lambda,
\end{equation}
which is a contradiction. A similar proof shows that if $\a_i$ is not fixed under $\nu$
\begin{equation}
a = b + c\xa{i}{-\frac{1}{2}}
\end{equation}
for some $b \in I_\Lambda$ and $c \in \un$, then $a \in I_\Lambda$, thus giving a contradiction.
\end{proof}

 \begin{thm}\label{exact}For $i$ chosen from a subset of $\{1,2,3,\cdots, l\}$ so that $(\a_i)_{(0)}$ creates a complete and non-repeating list, we have the following short exact sequences
\begin{equation}
0\to W_L^T \xrightarrow{e_{\alpha_i}}W_L^T \xrightarrow{\Delta_c^T(\lambda_i,-x)}W_L^T\to 0
\ee

\end{thm}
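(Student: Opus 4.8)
The plan is to carry the argument through the presentation $W_L^T\cong\un/I_\Lambda$ supplied by Theorem \ref{presentation} (via $f_\Lambda$), under which both maps in the sequence become explicit. Write a general element of $W_L^T$ as $a\cdot 1$ with $a\in\un$. Moving $e_{\alpha_i}$ to the right past each $\xbet{n}$ produces the character $C(\alpha_i,\beta)$ and the mode shift $-\langle\beta_{(0)},\alpha_i\rangle$, i.e.\ exactly the automorphism $\tau_{-\alpha_i,\theta_i^{-1}}$, after which $e_{\alpha_i}$ acts on $1$; hence $e_{\alpha_i}(a\cdot 1)=\frac{2}{\sigma(\alpha_i)}\ps{i}(a)\cdot 1$, which is the identity $c\,\xa{i}{-1}\cdot 1=e_{\alpha_i}(\tau_{\alpha_i,\theta_i}(c)\cdot 1)$ already used in the proof of Theorem \ref{presentation} (this is the case $\nu\alpha_i=\alpha_i$; replace $\xa{i}{-1}$ by $\xa{i}{-\frac{1}{2}}$ when $\nu\alpha_i\neq\alpha_i$). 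On the other hand, (\ref{DeltaProperty}) reads $\Delta_c^T(\lambda_i,-x)(a\cdot 1)=\ta{i}(a)\cdot 1$. So the theorem reduces to three statements: (i) $e_{\alpha_i}$ is injective on $W_L^T$; (ii) $\Delta_c^T(\lambda_i,-x)$ is surjective on $W_L^T$; (iii) $\ker\Delta_c^T(\lambda_i,-x)=\im e_{\alpha_i}$.

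For (i), the injectivity of $e_{\alpha_i}$ on $W_L^T$ is exactly the vertex-algebraic input already invoked in the proof of Theorem \ref{presentation}: $e_{\alpha_i}$ extends to an injective (indeed invertible, up to scalar and up to the exponential and $x^{(\alpha_i)_{(0)}}$ dressing, via $e_{-\alpha_i}\in\hat{L}$) operator on the ambient module $V_L^T$, so its restriction to the subspace $W_L^T$ is injective. For (ii), given $w=a\cdot 1$, set $a'=\tau_{-\gamma_i,\theta_i^{-1}}(a)\in\un$; since $\tau_{-\gamma_i,\theta_i^{-1}}$ is the inverse automorphism of $\ta{i}$, we get $\Delta_c^T(\lambda_i,-x)(a'\cdot 1)=\ta{i}(a')\cdot 1=a\cdot 1=w$.

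For (iii), both inclusions come from Lemma \ref{Containment}. Since $\ps{i}(a)=\tau_{-\alpha_i,\theta_i^{-1}}(a)\,\xa{i}{-1}\in\un\,\xa{i}{-1}$, and $\ta{i}(\un\,\xa{i}{-1})\subseteq\ta{i}(I_\Lambda+\un\,\xa{i}{-1})=I_\Lambda$ by Lemma \ref{Containment}, we get $\ta{i}(\ps{i}(a))\in I_\Lambda$, so $\Delta_c^T(\lambda_i,-x)(e_{\alpha_i}(a\cdot 1))=\frac{2}{\sigma(\alpha_i)}\ta{i}(\ps{i}(a))\cdot 1=0$; hence $\im e_{\alpha_i}\subseteq\ker\Delta_c^T(\lambda_i,-x)$. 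Conversely, if $\Delta_c^T(\lambda_i,-x)(a\cdot 1)=0$ then $\ta{i}(a)\in I_\Lambda$, so $a\in\tau_{-\gamma_i,\theta_i^{-1}}(I_\Lambda)=I_\Lambda+\un\,\xa{i}{-1}$ by Lemma \ref{Containment}; writing $a=b+c\,\xa{i}{-1}$ with $b\in I_\Lambda$, $c\in\un$, we get $a\cdot 1=c\,\xa{i}{-1}\cdot 1=\ps{i}(\tau_{\alpha_i,\theta_i}(c))\cdot 1=\frac{\sigma(\alpha_i)}{2}\,e_{\alpha_i}(\tau_{\alpha_i,\theta_i}(c)\cdot 1)\in\im e_{\alpha_i}$, so $\ker\Delta_c^T(\lambda_i,-x)\subseteq\im e_{\alpha_i}$. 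The case $\nu\alpha_i\neq\alpha_i$ is identical, with $\xa{i}{-1}$ replaced by $\xa{i}{-\frac{1}{2}}$ throughout. The only non-formal ingredient is the injectivity in (i); everything else is bookkeeping on top of Theorem \ref{presentation} and Lemma \ref{Containment}, and I expect the only real care to be needed in pinning down the scalar $\sigma(\alpha_i)$ and the character $\theta_i$ so that the identity $e_{\alpha_i}(a\cdot 1)=\frac{2}{\sigma(\alpha_i)}\ps{i}(a)\cdot 1$ holds exactly --- a computation already implicit in the proof of Theorem \ref{presentation}.
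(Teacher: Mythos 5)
Your proposal is correct and follows essentially the same route as the paper: both arguments reduce the statement, via the presentation of Theorem \ref{presentation} and the property (\ref{DeltaProperty}), to showing that membership in $\ker\Delta_c^T(\lambda_i,-x)$ and membership in $\mathrm{Im}\, e_{\alpha_i}$ are each equivalent to the representative $a$ lying in $I_\Lambda+\un\xa{i}{-1}$ (resp.\ $I_\Lambda+\un\xa{i}{-\frac{1}{2}}$), with Lemma \ref{Containment} supplying the key containments. You spell out a few steps the paper leaves implicit (the identity $e_{\alpha_i}(a\cdot 1)=\tfrac{2}{\sigma(\alpha_i)}\ps{i}(a)\cdot 1$ and the injectivity of $e_{\alpha_i}$ via the ambient action on $V_L^T$), but the substance is the same.
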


\begin{proof}
It is clear that for all chosen $i$ $e_{\a_i}$, is injective and $\Delta_c^T(\lambda_i,-x)$ is surjective. Now we take $w=a\cdot 1\in\text{ker }\Delta_c^T(\lambda_i,-x)$ for $a\in\un$. So we have
\be
0=\Delta_c^T(\lambda_i, -x)w=\ta{1}(a)\cdot 1,
\ee 
and thus $\ta{i}(a)\in I_{\Lambda}$ by Theorem \ref{presentation}. This implies that 
\be
a\in I_{\Lambda}+\un\xa{i}{-1},
\ee
if $\nu\a_i=\a_i$ and 
\be
a\in I_{\Lambda}+\un\xa{i}{-\frac{1}{2}},
\ee
if $\nu\a_i\neq \a_i$, by Lemma \ref{Containment}. So we have the condition that $w=a\cdot 1\in\text{ker }\Delta_c^T(\lambda_i,-x)$ is equivalent to the condition that $a\in I_{\Lambda}+\un\xa{i}{-1}$ ( or $I_{\Lambda}+\un\xa{i}{-\frac{1}{2}}$ as appropriate).

Now suppose $w=a\cdot 1\in\text{Im }e_{\a_i}$ for $a\in\un$. So we can write $w=b\xa{i}{-1}\cdot 1$ for $b\in\un$ if $\nu\a_i=\a_i$ or $w=b^{'}\xa{i}{-\frac{1}{2}}\cdot 1$ for $b^{'}\in\un$ if $\nu\a_i\neq\a_i$ and thus $a\cdot 1=b\xa{i}{-1}\cdot 1\in I_{\Lambda}+\un\xa{i}{-1}$ $a\cdot 1=b^{'}\xa{i}{-\frac{1}{2}}\cdot 1\in I_{\Lambda}+\un\xa{1}{-\frac{1}{2}}$ as appropriate. This implies that the condition that $w=a\cdot 1\in\text{Im}e_{\a_i}$ is equivalent to the condition that $a\in I_{\Lambda}+\un\xa{i}{-1}$ (or $I_{\Lambda}+\un\xa{i}{-\frac{1}{2}}$ as appropriate). So we have
\be
w=a\cdot 1\in\text{ker }\Delta_c^T(\lambda_i,-x) \text{ if and only if } w=a\cdot 1\in\text{Im }e_{\a_i},
\ee
and thus 
\be
\text{ker }\Delta_c^T(\lambda_i,-x)=\text{Im }e_{\a_i}.
\ee

\end{proof}

We use the multi-grading described earlier, and define the multigraded dimension of $W_L^T$ by:
\begin{equation}\label{character1}
\chi({\bf x};q) = \mathrm{tr}|_{W_L^T}x_1^{(\lambda^{(i_1)})_{(0)}}\cdots x_k^{(\lambda^{(i_k)})_{(0)}} q^{2\hat{L}^{\hnu}(0)} \in q^{\frac{\text{dim}(\h_{(1)})}{16}}\mathbb{C}[[{\bf x},q]],
\end{equation}
where the $\lambda^{(i_j)}$ are described in (\ref{charge1}-\ref{charge2}). We use the modification
\be\label{character2}
\chi'({\bf x};q) = q^{-\frac{\text{dim}\h_{(1)}}{16}}\chi({\bf x};q) \in \mathbb{C}[[{\bf x},q]].
\ee

\begin{cor}\label{exact2}Let $\{\a_{i_1}, \dots, \a_{i_k}\}$ be a complete set of representatives from the equivalence classes of the simple roots $\{\a_1,\dots,\a_l\}$ formed by orbits under $\nu$, $\mathbf{m}=(m_1,m_2,\dots,m_k)$. We have the following short exact sequences
\begin{equation}
0\to \left(W_L^T\right)_{(\mathbf{m}-\EE_i,n-\sum_{j=1}^k2\left<(\a_{i_j})_{(0)},(\a_i)_{(0)}\right>m_j+2)} \xrightarrow{e_{\alpha_i}}\left(W_L^T\right)_{(\mathbf{m},n)} \xrightarrow{\Delta_c^T(\lambda_i,-x)}\left(W_L^T\right)_{(\mathbf{m},n-2m_i)}\to 0,
\end{equation}
if $\nu\a_i=\a_i$ and
\begin{equation}
0\to \left(W_L^T\right)_{(\mathbf{m}-\EE_i,n-\sum_{j=1}^k2\left<(\a_{i_j})_{(0)},(\a_i)_{(0)}\right>m_j+1)} \xrightarrow{e_{\alpha_i}}\left(W_L^T\right)_{(\mathbf{m},n)} \xrightarrow{\Delta_c^T(\lambda_i,-x)}\left(W_L^T\right)_{(\mathbf{m},n-m_i)}\to 0,
\end{equation}
if $\nu\a_i\neq \a_i$. Moreover, we have
\begin{equation}
\chi'(\mathbf{x};q) = \chi'(x_1,\dots,x_{i-1},q^2x_i,x_{i+1},\dots,x_k;q) + x_iq^2 \chi'(q^{2\left<(\a_{i_1})_{(0)}(\a_{i})_{(0)}\right>}x_1,\dots,q^{2\left<(\a_{i_k})_{(0)}(\a_{i})_{(0)}\right>}x_k;q)
\end{equation}
if $\nu\a_i=\a_i$ and
\begin{equation}
\chi'(\mathbf{x};q) = \chi'(x_1,\dots,x_{i-1},qx_i,x_{i+1},\dots,x_k;q) + x_iq \chi'(q^{2\left<(\a_{i_1})_{(0)}(\a_{i})_{(0)}\right>}x_1,\dots,q^{2\left<(\a_{i_k})_{(0)}(\a_{i})_{(0)}\right>}x_k;q)
\end{equation}
if $\nu\a_i\neq\a_i$.
\end{cor}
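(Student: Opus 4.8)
The plan is to refine the short exact sequence of Theorem~\ref{exact} into a family of short exact sequences of bigraded components, one for each pair $(\mathbf{m},n)$, and then read off the recursions by passing to multigraded dimensions.

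\emph{Step 1: homogeneity of $e_{\a_i}$ and $\Delta_c^T(\lambda_i,-x)$.} First I would record how each of the two maps moves an element through the charge grading and through the $q$-weight grading. For $\Delta_c^T(\lambda_i,-x)$ one uses (\ref{DeltaProperty}): it acts as $a\cdot 1\mapsto\ta{i}(a)\cdot 1$, and since $\ta{i}$ only replaces each factor $\xbet{m}$ by $\theta_i(\beta)\xbet{m+\langle\beta_{(0)},\gamma_i\rangle}$, never altering the root $\beta$, it preserves the charge grading; and since $\wt(\xbet{m})=-m-1+\frac{1}{2}\langle\beta,\beta\rangle$, it lowers the $L^{\hnu}(0)$-weight of a charge-$\mathbf{m}$ vector by $\langle\mu,\gamma_i\rangle$, where $\mu$ is the total root of that vector. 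Using $\gamma_i=(\lambda_i)_{(0)}$ and the definitions (\ref{charge1})--(\ref{charge2}) of the $\lambda^{(i)}$, this equals $m_i$ when $\nu\a_i=\a_i$ and $\frac{1}{2}m_i$ when $\nu\a_i\neq\a_i$ (the discrepancy being the orbit size $d_i\in\{1,2\}$), so $\Delta_c^T(\lambda_i,-x)$ lowers the $q$-weight index by $2m_i$, resp.\ $m_i$. For $e_{\a_i}$ one uses $e_{\a_i}\xbet{n}=C(\a_i,\beta)\xbet{n-\langle\beta_{(0)},\a_i\rangle}e_{\a_i}$ together with the formula for $e_{\a_i}\cdot 1$: applied to a charge-$\mathbf{m}$ monomial it returns, up to a nonzero scalar, a charge-$(\mathbf{m}+\EE_i)$ monomial in which each factor $\xbet{m}$ has been replaced by $\xbet{m-\langle\beta_{(0)},\a_i\rangle}$ and a new factor $\xa{i}{-1}$ (resp.\ $\xa{i}{-\frac{1}{2}}$) has been appended, so the $L^{\hnu}(0)$-weight changes by $\langle\mu,(\a_i)_{(0)}\rangle+\wt(\xa{i}{-1})$ (resp.\ with $\xa{i}{-\frac{1}{2}}$), where here $\mu$ is the total root of the \emph{source}. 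Writing $\mu'=\mu+\a_i$ for the total root of the image and expanding $(\a_i)_{(0)}$ in the basis $\{\lambda^{(i_j)}\}$ of $\h_{(0)}$ — which, by a short computation from (\ref{charge1})--(\ref{charge2}), is dual to $\{(\a_{i_j})_{(0)}\}$ — gives $\langle\mu',(\a_i)_{(0)}\rangle=\sum_j\langle(\a_{i_j})_{(0)},(\a_i)_{(0)}\rangle m_j$; combined with $\wt(\xa{i}{-1})=1$, $\wt(\xa{i}{-\frac{1}{2}})=\frac{1}{2}$, $\langle(\a_i)_{(0)},(\a_i)_{(0)}\rangle=2$ if $\nu\a_i=\a_i$ and $=1$ if $\nu\a_i\neq\a_i$ (using $\langle\a_i,\nu\a_i\rangle=0$ in the latter case, which holds for all three diagrams), this yields precisely the index shifts recorded in the statement.

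\emph{Step 2: exact sequences of components.} Since $V_L^T$ is a standard (integrable highest-weight) module, each of its graded pieces, hence each bigraded piece $(W_L^T)_{(\mathbf{m},n)}$ of $W_L^T$, is finite dimensional. By Step~1 the maps $e_{\a_i}$ and $\Delta_c^T(\lambda_i,-x)$ are homogeneous with respect to the bigrading with the shifts just computed, so restricting the exact sequence $0\to W_L^T\xrightarrow{e_{\a_i}}W_L^T\xrightarrow{\Delta_c^T(\lambda_i,-x)}W_L^T\to 0$ of Theorem~\ref{exact} to a single bigraded piece of the middle term yields exactly the two displayed short exact sequences.

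\emph{Step 3: passage to dimensions.} From $0\to A\to B\to C\to 0$ of finite dimensional spaces one has $\dim B=\dim A+\dim C$ for every $(\mathbf{m},n)$. Multiplying by $x_1^{m_1}\cdots x_k^{m_k}q^n$ and summing over $\mathbf{m}\in\ZZ_{\geq 0}^k$ and all $n$: the middle sum is $\chi'(\mathbf{x};q)$; reindexing the summation variable ($n\mapsto n+2m_i$, resp.\ $n+m_i$) turns the $C$-term into $\chi'(x_1,\dots,q^2x_i,\dots,x_k;q)$ (resp.\ with $qx_i$ in the $i$th slot); and reindexing $\mathbf{m}\mapsto\mathbf{m}+\EE_i$ together with the matching $n$-shift turns the $A$-term into $x_iq^2\,\chi'(q^{2\langle(\a_{i_1})_{(0)},(\a_i)_{(0)}\rangle}x_1,\dots,q^{2\langle(\a_{i_k})_{(0)},(\a_i)_{(0)}\rangle}x_k;q)$ (resp.\ $x_iq\,\chi'(\cdots)$), where the leftover scalar $q^2$ (resp.\ $q$) is $q^{2\langle(\a_i)_{(0)},(\a_i)_{(0)}\rangle}$ divided by the $q$-weight of the appended mode. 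This is the asserted recursion; since any global normalizing factor cancels from all three terms, one may equally run the argument with $\chi$ in place of $\chi'$.

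\emph{Main obstacle.} The one genuinely non-formal point is Step~1: verifying that $\{\lambda^{(i_j)}\}$ and $\{(\a_{i_j})_{(0)}\}$ are dual bases of $\h_{(0)}$ from the definition of the charge grading, and then bookkeeping the $q$-weight shift of $e_{\a_i}$ with the correct constants while keeping the $\nu$-fixed case (integer modes, orbit size $1$, leftover $q^2$) cleanly separated from the non-fixed case (half-integer modes, orbit size $2$, leftover $q$). Steps~2 and~3 are then formal.
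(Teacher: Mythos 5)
Your proposal is correct and is exactly the derivation the paper intends: the corollary is stated without proof as an immediate consequence of Theorem \ref{exact}, obtained by restricting the exact sequence to multigraded components via the homogeneity of $e_{\alpha_i}$ and $\Delta_c^T(\lambda_i,-x)$ and then taking dimensions. Your degree-shift bookkeeping (in particular the duality of $\{\lambda^{(i_j)}\}$ and $\{(\a_{i_j})_{(0)}\}$, and the values $\left<(\a_i)_{(0)},(\a_i)_{(0)}\right>=2$ or $1$) checks out and matches the indices in the statement.
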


Solving this recursion (cf. \cite{A}) and using the notation $(a;q)_n = (1-a)(1-aq)(1-aq^2)\dots (1-aq^{n-1})$, we have:

\begin{cor}
We have
\begin{equation}
\chi^{'}(\mathbf{x};q) = \sum_{{\bf m} \in (\mathbb{Z}_{\ge 0}^k)}\frac{q^\frac{{\bf m}^t A {\bf m}}{2}}{(q^{a_1};q^{a_1})_{m_1} \cdots (q^{a_k};q^{a_k})_{m_k} }x_1^{m_1}\cdots x_k^{m_k}
\end{equation}
where $a_j=2$ if $\nu\a_{i_j}=\a_{i_j}$, $a_j=1$ if $\nu\a_{i_j}\neq\a_{i_j}$ and $A=2\left(\left<(\a_{i_j})_{(0)},(\a_{i_k})_{(0)}\right>\right)$.
\end{cor}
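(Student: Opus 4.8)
The plan is to solve the recursion from Corollary \ref{exact2} by the standard generating-function argument (cf. \cite{A}, and as carried out in \cite{CalLM3}, \cite{CalMPe}, \cite{PS}), treating the cases $\nu\a_{i_j}=\a_{i_j}$ and $\nu\a_{i_j}\neq\a_{i_j}$ uniformly by writing $a_j=2$ or $a_j=1$ accordingly. Write $\chi'(\mathbf{x};q)=\sum_{\mathbf{m}\in\ZZ_{\geq 0}^k} c_{\mathbf{m}}(q)\,x_1^{m_1}\cdots x_k^{m_k}$, where $c_{\mathbf{0}}(q)=1$ since $(W_L^T)$ in total charge $\mathbf{0}$ is spanned by $v_\Lambda$ alone. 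The recursion in variable $x_i$ (valid for each representative $i=i_\ell$) is, in the case $\nu\a_i=\a_i$,
\begin{equation}
\chi'(\mathbf{x};q) = \chi'(\dots,q^2x_i,\dots;q) + x_iq^2\,\chi'(q^{2\langle(\a_{i_1})_{(0)},(\a_i)_{(0)}\rangle}x_1,\dots;q),
\end{equation}
and similarly with $q^2\mapsto q$ and the second-factor rescalings unchanged when $\nu\a_i\neq\a_i$. I would extract the coefficient of $x_1^{m_1}\cdots x_k^{m_k}$ on both sides: the left side gives $c_{\mathbf{m}}(q)$; the first term on the right gives $q^{a_i m_i}c_{\mathbf{m}}(q)$; the second term gives $q^{a_i}\cdot q^{\sum_j 2\langle(\a_{i_j})_{(0)},(\a_i)_{(0)}\rangle(m_j-\delta_{ij})}\,c_{\mathbf{m}-\EE_i}(q)$, where I have used $x_iq^{a_i}$ to shift $m_i\mapsto m_i-1$ and the rescaled arguments to pull out the indicated power of $q$. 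This yields the scalar recursion
\begin{equation}
(1-q^{a_i m_i})\,c_{\mathbf{m}}(q) = q^{a_i}\,q^{(A(\mathbf{m}-\EE_i))_i}\,c_{\mathbf{m}-\EE_i}(q),
\end{equation}
where $A=2(\langle(\a_{i_j})_{(0)},(\a_{i_k})_{(0)}\rangle)$ and $(A\mathbf{v})_i$ denotes the $i$-th entry; note $a_i=A_{ii}$ since $\langle(\a_i)_{(0)},(\a_i)_{(0)}\rangle$ equals $1$ or $\tfrac12$ according to whether $\nu\a_i=\a_i$ or not (this small identification is worth spelling out carefully). Observe $a_i+\,(A(\mathbf{m}-\EE_i))_i = A_{ii} + (A\mathbf{m})_i - A_{ii} = (A\mathbf{m})_i$, so the recursion simplifies to $(1-q^{a_im_i})c_{\mathbf{m}}(q)=q^{(A\mathbf{m})_i}c_{\mathbf{m}-\EE_i}(q)$.

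The next step is to verify by induction on the total charge $m_1+\cdots+m_k$ that $c_{\mathbf{m}}(q)=q^{\mathbf{m}^tA\mathbf{m}/2}/\big((q^{a_1};q^{a_1})_{m_1}\cdots(q^{a_k};q^{a_k})_{m_k}\big)$ satisfies this recursion for every $i$. Substituting the proposed formula, the ratio $c_{\mathbf{m}}/c_{\mathbf{m}-\EE_i}$ should equal $q^{(A\mathbf{m})_i}/(1-q^{a_im_i})$: the denominator factor changes only in its $i$-th slot, contributing the factor $1/(1-q^{a_im_i})$, while the exponent changes by $\tfrac12(\mathbf{m}^tA\mathbf{m}-(\mathbf{m}-\EE_i)^tA(\mathbf{m}-\EE_i)) = \tfrac12(2(A\mathbf{m})_i - A_{ii}) = (A\mathbf{m})_i - \tfrac12 a_i$; comparing with the desired $(A\mathbf{m})_i$ forces checking that the $q^{a_i}$ prefactor and the exponent $-\tfrac12 a_i$ vs.\ the earlier bookkeeping are consistent — I would double-check the half-integer arithmetic here, since the $a_i=1$ slots carry factors of $q$ (not $q^2$) and the form $A$ has $\tfrac12$'s on some diagonal entries. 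Since the recursions for all $i$ together with $c_{\mathbf{0}}=1$ determine $\chi'$ uniquely (any coefficient $c_{\mathbf{m}}$ with $\mathbf{m}\neq\mathbf{0}$ is reached from $c_{\mathbf{0}}$ by decrementing some positive coordinate, and $1-q^{a_im_i}\neq 0$ as a formal power series identity—indeed $1-q^{a_im_i}$ is invertible in $\CC[[q]]$ only after dividing, so one argues coefficient-by-coefficient in $q$), this establishes the closed form.

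The main obstacle I anticipate is purely bookkeeping: getting the exponents of $q$ exactly right, in particular (i) the identification $a_i = A_{ii} = 2\langle(\a_i)_{(0)},(\a_i)_{(0)}\rangle$ and the compatible treatment of the $a_i\in\{1,2\}$ distinction throughout, and (ii) tracking how the rescaling $x_j\mapsto q^{2\langle(\a_{i_j})_{(0)},(\a_i)_{(0)}\rangle}x_j$ in the second term of the recursion interacts with the shift $m_i\mapsto m_i-1$ to produce exactly $q^{(A\mathbf{m})_i}$. There is no deep difficulty — the argument is a verification that a proposed solution satisfies a first-order-type recursion with a unique solution — but the presence of the lattice $(q^{a_j};q^{a_j})_{m_j}$ rather than $(q;q)_{m_j}$ in the denominators (noted in the introduction as the reason these are not literally Nahm sums) means one cannot simply cite the untwisted computation verbatim and must redo the exponent arithmetic with the $a_j$'s in place.
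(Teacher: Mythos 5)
The paper offers no proof of this corollary beyond the phrase ``Solving this recursion (cf.\ \cite{A})'', so your plan --- extract coefficients, reduce to a first-order recursion in each coordinate, note uniqueness, and verify the closed form --- is exactly the right (and only reasonable) way to fill it in. However, the one point you flagged as needing a double-check is in fact an error in your write-up, and as written your two exponent computations do not agree, so the verification does not close.

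The problem is the claimed identity $a_i = A_{ii}$. You assert $\left<(\a_i)_{(0)},(\a_i)_{(0)}\right>$ equals $1$ or $\tfrac12$; it actually equals $2$ or $1$. If $\nu\a_i=\a_i$ then $(\a_i)_{(0)}=\a_i$ and $\left<\a_i,\a_i\right>=2$, so $A_{ii}=4$ while $a_i=2$; if $\nu\a_i\neq\a_i$ then $(\a_i)_{(0)}=\tfrac12(\a_i+\nu\a_i)$ with $\left<\a_i,\nu\a_i\right>=0$ in every case occurring here, so $\left<(\a_i)_{(0)},(\a_i)_{(0)}\right>=1$ and $A_{ii}=2$ while $a_i=1$. (This is confirmed by the explicit matrices in the paper: the diagonal entry is $4$ at fixed simple roots and $2$ at non-fixed ones.) The correct identity is therefore $A_{ii}=2a_i$. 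With it, your derivation of the recursion gives
\begin{equation*}
(1-q^{a_i m_i})\,c_{\mathbf{m}}(q) \;=\; q^{\,a_i + (A\mathbf{m})_i - A_{ii}}\,c_{\mathbf{m}-\EE_i}(q) \;=\; q^{(A\mathbf{m})_i - a_i}\,c_{\mathbf{m}-\EE_i}(q),
\end{equation*}
and your verification of the candidate $c_{\mathbf{m}}=q^{\mathbf{m}^tA\mathbf{m}/2}\big/\prod_j(q^{a_j};q^{a_j})_{m_j}$ gives exponent change $(A\mathbf{m})_i-\tfrac12 A_{ii} = (A\mathbf{m})_i - a_i$ together with the factor $(1-q^{a_im_i})^{-1}$. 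The two now match exactly, so the discrepancy of $\tfrac12 a_i$ that you noticed was an artifact of the wrong diagonal identity, not of the half-integer structure of $A$. The rest of your argument (uniqueness via induction on total charge, using that $1-q^{a_im_i}$ is a unit in $\CC[[q]]$ for $m_i\ge 1$, and $c_{\mathbf{0}}=1$) is fine and completes the proof once this is corrected.
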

Explicitly,we have:\\
For $\frak{g} = A_{2n-1}$, we have the $n \times n$ matrix
\begin{center}
$
A = \begin{bmatrix}
    2      & -1     & 0      & 0      & 0      & \ldots & \ldots & 0\\
    -1     &  2     & -1     & 0      & 0      & \ldots & \ldots & 0\\
    0      & -1     &  2     & -1     & 0      & \ldots & \ldots & 0\\
    \vdots & \vdots & \vdots & \ddots & \vdots & \ldots & \ldots & \vdots \\
    \vdots & \vdots & \vdots & \vdots & \ddots & \ldots & \ldots & \vdots \\
    \vdots & \vdots & \vdots & \vdots & \vdots & 2      & -1     & 0 \\
    \vdots & \vdots & \vdots & \vdots & \vdots & -1     & 2      & -2 \\
    \vdots & \vdots & \vdots & \vdots & \vdots & 0      & -2     & 4 \\
\end{bmatrix}$\\
\end{center}
Note: This is a symmetrized Cartan matrix for the Lie algebra $C_{n}$ with the last row multiplied by $2$.
For $\frak{g} = D_n$, we have the $(n-1) \times (n-1)$ matrix\\
\begin{center}
$
A = \begin{bmatrix}
    4      & -2     & 0      & 0      & 0      & \ldots & \ldots & 0\\
    -2     &  4     & -2     & 0      & 0      & \ldots & \ldots & 0\\
    0      & -2     &  4     & -2     & 0      & \ldots & \ldots & 0\\
    \vdots & \vdots & \vdots & \ddots & \vdots & \ldots & \ldots & \vdots \\
    \vdots & \vdots & \vdots & \vdots & \ddots & \ldots & \ldots & \vdots \\
    \vdots & \vdots & \vdots & \vdots & \vdots & 4      & -2     & 0 \\
    \vdots & \vdots & \vdots & \vdots & \vdots & -2     & 4      & -2 \\
    \vdots & \vdots & \vdots & \vdots & \vdots & 0      & -2     & 2 \\
\end{bmatrix}$\\
\end{center}
 Note: This is a symmetrized Cartan matrix for the Lie algebra $B_{n-1}$ with the first $n-2$ rows multiplied by $2$.\\
For $\frak{g} = E_6$, we have 
 \begin{center}
$
A = \begin{bmatrix}
    2 & -1 & 0 & 0\\
    -1 & 2 & -2 & 0\\
    0 & -2 & 4 & -2\\
    0 & 0 & -2 & 4\\
\end{bmatrix}$\\
\end{center}
 Note: This is a symmetrized Cartan matrix for the Lie algebra $F_4$ with the last $2$ rows multiplied by $2$.\\

Although there is a unique level one standard module for the algebras $A_{2n-1}^{(2)}$, $D_n^{(2)}$, and $E_6^{(2)}$ as well as associated twisted $V_L$ modules, we will also consider certain shifted principal subspaces and their characters, in parallel with \cite{CalLM3}, \cite{CalMPe}, and \cite{PS}. We will only include the general outline of the construction of the shifted subspaces along with their characters. For a detailed treatment see \cite{PS}. 

For $\gamma\in\mathbb{Q}\otimes_{\ZZ}L$, we form an isomorphic shifted copy of $\hat{\g}[\hnu]$ denoted by 
\be
\hat{\g}[\hnu_{\gamma}].\ee
Further, we have a representation of $\hat{\g}[\hnu_{\gamma}]$ on a shifted version of $V_L^T$ denoted by $V_L^{T,\gamma}$, with vertex operators given by 
\be
Y^{\hnu,\gamma}(\iota(e_{\alpha}),x)=Y^{\hnu}(\iota(e_{\alpha}),x)x^{\left<\alpha_{(0)},\gamma\right>},\ee
whose highest weight vector is we denote by $v_{\Lambda}^{\gamma}$. Finally, we define the $\gamma$-shifted principal subspace
\be 
W_L^{T,\gamma}=U(\overline{\mathfrak{n}}[\hnu_{\gamma}])\cdot v_{\Lambda}^{\gamma}\ee
where $\overline{\mathfrak{n}}[\hnu_{\gamma}]$ is a shifted version of $\overline{\mathfrak{n}}[\hnu]$.
 
 Now we move to our particular set up. Recall $\gamma_i=(\lambda_i)_{(0)}$ and let $\gamma_{i_1},\dots,\gamma_{i_k}$ be a complete list of such elements. For each $j\in\{i_1,\dots,i_k\}$ we consider the subspace $W_L^{\gamma_i}$ with its character $\chi_{j}^{'}(\mathbf{x};q)$ defined in parallel with (\ref{character1}) and (\ref{character2}). These shifted characters are related to $\chi^{'}(\mathbf{x};q)$ by
 \be
\chi_{j}^{'}(\mathbf{x};q)=\chi^{'}(x_1,\dots,q^a x_j,\dots, x_k;q)
\ee
where $a=2$ if $\nu\a_{i_j}=\a_{i_j}$ and $a=1$ otherwise. This implies that 
\be
\chi_j^{'}(\mathbf{x};q) = \sum_{{\bf m} \in (\mathbb{Z}_{\ge 0}^k)}\frac{q^{\frac{{\bf m}^t A {\bf m}}{2}+am_j}}{(q^{a_1};q^{a_1})_{m_1} \cdots (q^{a_k};q^{a_k})_{m_k} }x_1^{m_1}\cdots x_k^{m_k}
\end{equation}
where $a_r=2$ if $\nu\a_{i_r}=\a_{i_r}$, $a_r=1$ if $\nu\a_{i_j}\neq\a_{i_j}$, $a=2$ if $\nu\a{i_j}=\a_{i_j}$, $a=1$ if $\nu\a_{i_j}\neq\a_{i_j}$, and $A=2\left(\left<(\a_{i_j})_{(0)},(\a_{i_k})_{(0)}\right>\right)$.

\vspace{.3in}

\vspace{.2in}

\noindent{\small \sc Department of Mathematics and Computer Science, Colorado College, 
Colorado Springs, CO 80903} \\ {\em E--mail address}: michael.penn@coloradocollege.edu

\vspace{.2in}
\noindent{\small \sc Department of Mathematics and Computer Science, Ursinus College, 
Collegeville, PA 19426} \\ {\em E--mail address}: csadowski@ursinus.edu

\end{document}